\documentclass[11pt,a4paper]{article}
\usepackage{amsfonts}
\usepackage{graphicx,tikz}
\usepackage{epsf,epsfig,amsfonts,amsgen,amsmath,amstext,amsbsy,amsopn,amsthm,lineno}
\usepackage{color}
\usepackage{rotating}

\setlength{\textwidth}{172mm} \setlength{\oddsidemargin}{-5mm}
\setlength{\evensidemargin}{7mm} \setlength{\topmargin}{-20mm}
\setlength{\textheight}{245mm}

\theoremstyle{theorem}
\newtheorem{theorem}{Theorem}[section]

\newtheorem{lemma}{Lemma}[section]
\newtheorem{corollary}{Corollary}

\newtheorem{observation}{Observation}

\theoremstyle{definition}

\newtheorem{claim}{Claim}

\newtheorem{construction}{Construction}[section]

\baselineskip 15pt

\newcommand{\clo}{{\rm cl}^{\rm o}}
\newcommand{\clr}{{\rm cl}^{\rm r}}
\newcommand{\clc}{{\rm cl}^{\rm c}}
\date{}
\begin{document}

\title{\bf
Closures and heavy pairs for hamiltonicity\thanks{Supported by NSFC (Nos. 12071370, 12171393, and 12131013), China Scholarship Council,
P.R.~China
(No. 202206290004) and Shaanxi Fundamental Science Research Project for Mathematics and Physics (No. 22JSZ009).}}

\author{Wangyi Shang$^{a,b,c}$, Hajo Broersma$^{b,}$\thanks{Corresponding author.
 E-mail addresses: wangyishang@mail.nwpu.edu.cn (W.~Shang), h.j.broersma@utwente.nl (H.J.~Broersma), sgzhang@nwpu.edu.cn (S.~Zhang) , binlongli@nwpu.edu.cn (B.~Li).}, Shenggui Zhang$^{a,c}$, Binlong Li$^{a,c}$\\[2mm]
\small $^a$ School of Mathematics and Statistics, Northwestern Polytechnical University,\\
\small Xi'an, Shaanxi 710129, P. R.~China\\
\small $^b$ Faculty of Electrical Engineering,
Mathematics
and Computer Science, University of Twente,\\
\small P.O. Box 217, 7500 AE Enschede, The Netherlands\\
\small $^c$ Xi'an-Budapest Joint Research Center for Combinatorics, Northwestern Polytechnical University,\\
\small Xi'an, Shaanxi 710129, P. R.~China \\}
\maketitle

\begin{center}
\begin{minipage}{130mm}
\small\noindent{\bf Abstract:} We say that a graph $G$ on $n$ vertices is $\{H,F\}$-$o$-heavy if every induced subgraph of $G$ isomorphic to $H$ or $F$ contains two nonadjacent vertices with degree sum at least $n$. Generalizing earlier sufficient forbidden subgraph conditions for hamiltonicity, in 2012, Li, Ryj\'a\v{c}ek, Wang and Zhang determined all connected graphs $R$ and $S$ of order at least 3 other than $P_3$ such that every 2-connected $\{R,S\}$-$o$-heavy graph is hamiltonian. In particular, they showed that, up to symmetry, $R$ must be a claw and $S\in\{P_4,P_5,C_3,Z_1,Z_2,B,N,W\}$. In 2008, \v{C}ada extended  Ryj\'a\v{c}ek's closure concept for claw-free graphs by introducing what we call the $c$-closure for claw-$o$-heavy graphs. We apply it here to characterize the structure of the $c$-closure of 2-connected $\{R,S\}$-$o$-heavy graphs, where $R$ and $S$ are as above. Our main results extend or generalize several earlier results on hamiltonicity involving forbidden or $o$-heavy subgraphs.

\smallskip
\noindent{\bf Keywords:} Closure, heavy subgraph, hamiltonian graph, claw-free graph, claw-$o$-heavy graph
\end{minipage}
\end{center}

\smallskip

\section{Introduction}\label{intro}

We consider finite simple undirected graphs only, and for
terminology and notation not defined here we refer the reader to~\cite{BondyMurty}.

Our research is motivated by a long list of earlier work regarding different types of sufficient conditions for hamiltonicity of graphs, i.e., for the existence of a cycle containing every vertex of the graph. The earliest conditions in this area go back to the 1950s, when the first degree conditions were established. This was later followed by forbidden induced subgraph conditions, an active area since the 1980s. More recently, several authors have considered combining the two types of conditions by imposing degree conditions on induced subgraphs instead of forbidding them. In another direction, degree conditions and forbidden subgraph conditions were the motivation to introduce closure concepts for hamiltonicity. These concepts aim to allow the addition of edges to the graph without affecting the (non)hamiltonicity. We will see examples of all of the above types of results in the sequel. However, we will refrain from an exhaustive overview due to the length of the paper.

Let $G$ be a graph. For a vertex $v\in V(G)$, we use $N_{G}(v)$ (the \emph{neighborhood} of $v$) to denote the set, and $d_{G}(v)$ (the \emph{degree} of $v$) for the number, of neighbors of $v$ in $G$. For a subgraph $G'$ of $G$, we define $N_G(G')=\bigcup_{v\in V(G')} N_G(v)\setminus V(G')$. We skip the subscript $G$ in the above expressions if no confusion may arise.

We begin with the following classic sufficient degree condition for hamiltonian graphs.

\begin{theorem}[Ore \cite{Ore}]\label{ThOr}
Let $G$ be a graph on $n\geq 3$ vertices. If every two nonadjacent
vertices of $G$ have degree sum at least $n$, then $G$ is
hamiltonian.
\end{theorem}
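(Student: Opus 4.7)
The plan is a proof by contradiction via a maximality reduction followed by an elementary counting argument on a Hamilton path. Suppose for contradiction that $G$ satisfies the hypothesis but is not hamiltonian. First I would pass to a maximal non-hamiltonian supergraph $G^*$ on the same vertex set, obtained by successively adding edges as long as non-hamiltonicity is preserved. Since edge addition only increases degrees, $G^*$ still satisfies the Ore degree condition, so it suffices to derive a contradiction inside $G^*$.

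The next step is to produce a Hamilton path in $G^*$. Because $K_n$ is hamiltonian for $n\geq 3$, we have $G^*\neq K_n$, so there exist nonadjacent vertices $u,v$ in $G^*$. By maximality of $G^*$, the graph $G^*+uv$ is hamiltonian, and each of its Hamilton cycles must use the new edge $uv$; deleting that edge produces a Hamilton path $v_1v_2\cdots v_n$ in $G^*$ with nonadjacent endpoints $v_1=u$ and $v_n=v$.

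The heart of the argument is then the classical index-set trick. Define
\[
S=\{\,i\in\{1,\dots,n-1\}:v_1v_{i+1}\in E(G^*)\,\},\qquad T=\{\,i\in\{1,\dots,n-1\}:v_iv_n\in E(G^*)\,\},
\]
so that $|S|=d_{G^*}(v_1)$ and $|T|=d_{G^*}(v_n)$. I would verify that $S\cap T=\emptyset$: any $i\in S\cap T$ produces the Hamilton cycle $v_1v_2\cdots v_iv_nv_{n-1}\cdots v_{i+1}v_1$ in $G^*$, contradicting non-hamiltonicity. Consequently $d_{G^*}(v_1)+d_{G^*}(v_n)=|S|+|T|\leq n-1$, which contradicts the Ore condition applied to the nonadjacent pair $\{v_1,v_n\}$.

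I do not anticipate a genuine obstacle; the only mild subtlety is checking that the rotated cycle is well-defined, which reduces to observing that $n-1\notin S$ because $v_1v_n\notin E(G^*)$, so every $i\in S\cap T$ automatically satisfies $i\leq n-2$. Everything else is bookkeeping, and the whole argument uses only the degree hypothesis at a single pair of vertices, namely the endpoints of the Hamilton path produced by maximality.
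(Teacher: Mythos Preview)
Your argument is the standard and correct proof of Ore's theorem; the maximality reduction, the Hamilton path extraction, and the index-set counting are all carried out cleanly, and your remark about $n-1\notin S$ handles the only potential edge case. Note, however, that the paper does not actually prove Theorem~\ref{ThOr}: it is stated with a citation to Ore~\cite{Ore} as background, so there is no ``paper's own proof'' to compare against. Your write-up is a perfectly good self-contained substitute for that citation.
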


Based on this result, we say that a pair of nonadjacent vertices in a graph $G$ is \emph{$o$-heavy} (where the $o$ refers to Ore) if their degree sum is at least $|V(G)|$. Motivated by the proof of the above result, Bondy and Chv\'atal~\cite{BondyChvatal} introduced the following closure concept. The \emph{o-closure} of a graph $G$, denoted by $\clo(G)$, is the graph obtained from $G$ by recursively joining $o$-heavy pairs of $G$ by an edge (and adapting the degrees of the vertices of $G$), till there are no such pairs left. Bondy and Chv\'atal showed that the $o$-closure of a graph is uniquely determined, and proved the following result.

\begin{theorem}[Bondy and Chv\'atal \cite{BondyChvatal}]\label{ThBoCh}
A graph $G$ is hamiltonian if and only if $\clo(G)$ is hamiltonian.
\end{theorem}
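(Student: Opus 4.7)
The plan is to prove the easy direction first, then reduce the converse to a single-edge statement, and finally prove that statement by the classical rotation/pigeonhole argument.

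For the ``only if'' direction, note that $\clo(G)$ is obtained from $G$ by only adding edges, so any hamiltonian cycle of $G$ remains a hamiltonian cycle in $\clo(G)$. This takes one line.

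For the ``if'' direction, since $\clo(G)$ is obtained from $G$ by a finite sequence of single-edge additions $G=G_0\subseteq G_1\subseteq\cdots\subseteq G_k=\clo(G)$, each $G_{i+1}=G_i+u_iv_i$ with $u_iv_i\notin E(G_i)$ and $d_{G_i}(u_i)+d_{G_i}(v_i)\ge n$, it suffices by a straightforward downward induction on $i$ to prove the following key lemma: if $u,v$ are nonadjacent vertices in a graph $H$ on $n$ vertices with $d_H(u)+d_H(v)\ge n$ and $H+uv$ is hamiltonian, then $H$ is hamiltonian. (Here one should observe that degrees can only increase as we add edges, so the hypothesis $d_{G_i}(u_i)+d_{G_i}(v_i)\ge n$ is preserved when we restrict attention to the intermediate graphs; this bookkeeping is trivial but worth stating explicitly.)

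To prove the key lemma, I would take a hamiltonian cycle $C$ of $H+uv$. If $uv\notin E(C)$, then $C\subseteq H$ and we are done, so assume $uv\in E(C)$. Then $C-uv$ is a hamiltonian path of $H$ from $u$ to $v$; label its vertices $v_1=u,v_2,\ldots,v_n=v$ in order. Define
\[
A=\{i:2\le i\le n,\ uv_i\in E(H)\},\qquad
B=\{i:2\le i\le n,\ vv_{i-1}\in E(H)\}.
\]
Then $|A|=d_H(u)$ and $|B|=d_H(v)$, while $A,B\subseteq\{2,\ldots,n\}$, a set of size $n-1$. Since $|A|+|B|=d_H(u)+d_H(v)\ge n>n-1$, by pigeonhole there exists $i\in A\cap B$. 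Then both $uv_i=v_1v_i$ and $vv_{i-1}=v_nv_{i-1}$ are edges of $H$, so
\[
v_1 v_i v_{i+1}\cdots v_n v_{i-1} v_{i-2}\cdots v_2 v_1
\]
is a hamiltonian cycle of $H$, as required.

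The main (and really only) obstacle is the pigeonhole/rotation step; once one sets up the two index sets $A$ and $B$ correctly, the existence of the ``crossing'' pair $(v_{i-1},v_i)$ is forced and the new hamiltonian cycle writes itself. Uniqueness of $\clo(G)$ (asserted in the excerpt but not part of the statement to be proved) is not needed for this proof; if desired it can be obtained separately by showing that the set of edges eventually added does not depend on the order of additions, via a short confluence argument.
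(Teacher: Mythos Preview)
Your proof is correct and is precisely the classical Bondy--Chv\'atal argument. Note, however, that the paper does not supply its own proof of this theorem: it is quoted as a background result with a citation to~\cite{BondyChvatal}, so there is nothing in the paper to compare your argument against. Your write-up reproduces the original proof faithfully; the parenthetical remark about degrees being preserved is unnecessary (by construction of $\clo(G)$, each added edge $u_iv_i$ already satisfies $d_{G_i}(u_i)+d_{G_i}(v_i)\ge n$ in the graph $G_i$ where it is added), but it does no harm.
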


Next, we turn to forbidden induced subgraph conditions. For a set $S\subseteq V(G)$,
the subgraph $G[S]$ of $G$ \emph{induced by} $S$ has vertex set $S$ and all edges of $G$ which join pairs of vertices of $S$. A graph $H$ is an \emph{induced subgraph} of $G$ if $H$ is isomorphic to $G[S]$ for some set $S\subseteq V(G)$. Let $H$ be a given graph.
Then a graph $G$ is called \emph{$H$-free} if $G$ contains no induced subgraph isomorphic to $H$. If we impose that $G$ is $H$-free, then we also say that $H$ is a \emph{forbidden subgraph} of $G$. Note that if $H_1$ is an induced subgraph of $H_2$, then an $H_1$-free graph is also $H_2$-free. For a family $\mathcal{H}$ of graphs, $G$ is said to be \emph{$\mathcal{H}$-free} if $G$ is $H$-free for every $H\in\mathcal{H}$.

The graph $K_{1,3}$ is usually called a \emph{claw}, and we also use the more common term \emph{claw-free} instead of $K_{1,3}$-free. By $P_k$ ($k\geq 1$) and $C_k$ ($k\geq 3$) we denote a path and cycle on $k$ vertices, respectively. We write $Z_k$ ($k\geq 1$) for the graph obtained by identifying a vertex of
a $C_3$ with an end-vertex of a $P_{k+1}$. Some other special graphs we will use in the paper are shown in Figure~\ref{fig1}. We refer to these graphs by the capital letters $B$, $N$, $W$ and $D$, or by their common names \emph{bull}, \emph{net}, \emph{wounded} and \emph{diamond}, respectively.

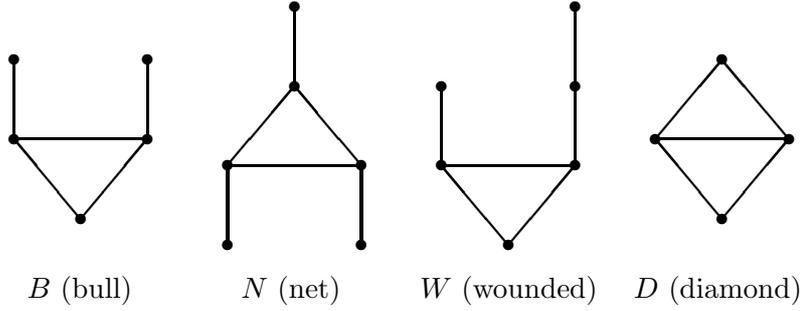
\begin{figure}[h]
\centering
\begin{picture}(320,130)
\thicklines

\put(0,0){\put(45,40){\circle*{4}} \put(45,40){\line(-5,6){25}}
\put(45,40){\line(5,6){25}} \put(20,70){\line(1,0){50}}
\multiput(20,70)(50,0){2}{\multiput(0,0)(0,30){2}{\put(0,0){\circle*{4}}}
\put(0,0){\line(0,1){30}}} \put(25,10){$B$ (bull)}}

\put(80,0){\multiput(20,30)(50,0){2}{\put(0,0){\line(0,1){30}}
\multiput(0,0)(0,30){2}{\circle*{4}}}
\multiput(45,90)(0,30){2}{\put(0,0){\circle*{4}}}
\put(45,90){\line(0,1){30}} \put(20,60){\line(1,0){50}}
\put(20,60){\line(5,6){25}} \put(70,60){\line(-5,6){25}}
\put(25,10){$N$ (net)}}

\put(160,0){\put(45,30){\circle*{4}} \put(20,60){\line(1,0){50}}
\put(45,30){\line(5,6){25}} \put(45,30){\line(-5,6){25}}
\multiput(20,60)(0,30){2}{\put(0,0){\circle*{4}}}
\multiput(70,60)(0,30){3}{\put(0,0){\circle*{4}}}
\put(20,60){\line(0,1){30}} \put(70,60){\line(0,1){60}}
\put(12,10){$W$ (wounded)}}

\put(240,0){\put(45,40){\circle*{4}} \put(20,70){\circle*{4}}
\put(70,70){\circle*{4}} \put(45,100){\circle*{4}}
\put(45,40){\line(-5,6){25}} \put(45,40){\line(5,6){25}}
\put(20,70){\line(5,6){25}} \put(70,70){\line(-5,6){25}}
\put(20,70){\line(1,0){50}} \put(12,10){$D$ (diamond)}}

\end{picture}\caption{\small The graphs $B$, $N$, $W$ and $D$.}
\label{fig1}
\end{figure}

By considering pairs of forbidden subgraphs, Duffus et al. in~\cite{DuGoJa} obtained the following result.

\begin{theorem}[Duffus et al. \cite{DuGoJa}]\label{ThBuGoJa}
Every 2-connected $\{K_{1,3},N\}$-free graph is hamiltonian.
\end{theorem}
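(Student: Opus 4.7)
The plan is to proceed by contradiction with a longest-cycle argument. Suppose $G$ is a 2-connected $\{K_{1,3},N\}$-free graph that is not hamiltonian. Let $C$ be a longest cycle in $G$, oriented, and write $u^-,u^+$ for the predecessor and successor of $u\in V(C)$. Since $C$ is not hamiltonian, pick a component $H$ of $G-V(C)$; by 2-connectivity, some vertex $x\in V(H)$ has a neighbor $v\in V(C)$, and $H$ has at least two attachments on $C$.

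The first step produces a triangle on $C$. Because $C$ is longest, $x$ cannot be adjacent to $v^-$ or $v^+$: inserting $x$ between $v$ and either cycle-neighbor would enlarge $C$. Hence the pairs $xv^-$ and $xv^+$ are non-edges. Applying the $K_{1,3}$-free condition to $\{v,v^-,v^+,x\}$ at centre $v$ therefore forces $v^-v^+\in E(G)$, giving a triangle $T=v^-vv^+$ on $C$ with the external vertex $x$ attached only to $v$ among $V(T)$.

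The second step extends this triangle to a forbidden net. The vertex $x$ already serves as the pendant of $N$ at $v$. It remains to identify pendants $y$ at $v^-$ and $z$ at $v^+$; the natural candidates are $y=v^{--}$ and $z=v^{++}$, so that $\{v^{--},v^-,v,v^+,v^{++},x\}$ should be the desired induced copy of $N$. To validate this, every potentially destructive extra adjacency (for instance $v^{--}\sim v$, $v^{--}\sim v^+$, $v^{--}\sim x$, the symmetric variants for $v^{++}$, or $v^{--}v^{++}\in E$) is analysed in turn; in each case we either reroute $C$ through the triangle $T$---possibly picking up $x$ via a second attachment of $H$ on $C$ supplied by 2-connectivity---to obtain a cycle longer than $C$, or we invoke the $K_{1,3}$-free condition at another centre to force yet another shortcut and iterate. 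The only configuration that survives all of these reductions is precisely an induced net on $\{v^{--},v^-,v,v^+,v^{++},x\}$, contradicting $N$-freeness.

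The main obstacle is the case analysis in the second step: each candidate shortcut must be explicitly converted into a longer cycle, which typically requires combining the triangle $T$ with the insertion of $x$ and, where necessary, an alternate attachment of $H$ on $C$. A few degenerate configurations (short cycles where $v^{--}$ coincides with $v^{++}$, or where a candidate pendant must be drawn from $V(H)$ rather than from $V(C)$) also require separate handling, but in every such case either a longer cycle is produced, a claw appears, or an induced net emerges, completing the contradiction and thus establishing that $G$ must in fact be hamiltonian.
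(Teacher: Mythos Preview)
Your outline follows the classical longest-cycle argument that underlies the original Duffus--Gould--Jacobson proof, and the overall strategy is sound: find a triangle on $C$ via claw-freeness at an attachment point, then argue that the triangle together with suitable pendants yields an induced net. The sketch is honest about where the work lies---the case analysis in step two---and you correctly flag that the second attachment of $H$ (from 2-connectedness) is needed to convert several of the stray adjacencies into longer cycles. As written it is a plan rather than a proof, but the plan is the right one.

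The paper, however, does not give its own direct proof of this theorem; it quotes it from~\cite{DuGoJa}. The route the paper \emph{implicitly} offers is entirely different: a $\{K_{1,3},N\}$-free graph is trivially claw-free and $N$-$pq$-heavy (there are no induced nets to check), so Theorem~\ref{ThNpqH} applies; that theorem in turn rests on \v{C}ada's closure (Theorem~\ref{ThCa}), Lemma~\ref{LepqHeavy}, and the structural characterization of $c$-closed graphs in Theorem~\ref{ThCPQ}, together with the easy observation that every graph in $\mathcal{C}_1^N\cup\mathcal{C}_2^N\cup\mathcal{C}_1^{NP}\cup\mathcal{C}_2^{NP}\cup\mathcal{C}_1^{NPQ}\cup\mathcal{C}_2^{NPQ}$ is hamiltonian. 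Alternatively, one can use Ryj\'a\v{c}ek's closure directly via Theorems~\ref{ThRy'} and~\ref{Ryjacek'N}. Your approach is elementary and self-contained but requires a careful ad hoc case analysis; the closure route replaces that case analysis with heavier structural machinery but yields the result (and much stronger generalizations) uniformly once that machinery is in place.
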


A natural question is which pairs of (connected) forbidden subgraphs lead to the same conclusion. Clearly, forbidding a graph on one vertex or on one edge makes little sense, and forbidding a $P_3$ (in a connected graph) yields a complete graph. Taking this into account, Bedrossian~\cite{Bedrossian} characterized all pairs of forbidden subgraphs for
hamiltonicity, in the following sense.

\begin{theorem}[Bedrossian \cite{Bedrossian}]\label{ThBe}
Let $R,S$ be connected graphs of order at least 3 with $R,S\neq P_3$,
and let $G$ be a 2-connected graph. Then $G$ being $\{R,S\}$-free
implies $G$ is hamiltonian if and only if (up to symmetry)
$R=K_{1,3}$ and $S=P_4$, $P_5$, $P_6$, $C_3$, $Z_1$, $Z_2$, $B$, $N$ or
$W$.
\end{theorem}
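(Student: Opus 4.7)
The plan is to prove both directions: sufficiency (each listed pair implies hamiltonicity) and necessity (no other pair does). For sufficiency, I would first exploit the following monotonicity: if $H_1$ is an induced subgraph of $H_2$, then every $H_1$-free graph is $H_2$-free, so forbidding $H_1$ is a stronger hypothesis than forbidding $H_2$. Inspecting the nine candidates, one checks directly from Figure~\ref{fig1} and the definition of $Z_k$ that $P_4$ and $P_5$ are induced subgraphs of $P_6$; that $C_3$, $Z_1$ and $Z_2$ are induced subgraphs of $W$; and that $B$ is an induced subgraph of both $N$ and $W$. Hence the nine cases collapse to the three maximal ones $S\in\{P_6,N,W\}$, and the remaining six follow immediately. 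The case $S=N$ is exactly Theorem~\ref{ThBuGoJa} (Duffus et al.). For $S=P_6$ and $S=W$ my approach is the standard longest-cycle argument: given a 2-connected $\{K_{1,3},S\}$-free graph $G$, take a longest cycle $C$, assume for contradiction that some vertex $x$ lies outside $C$, use 2-connectedness to find two internally disjoint paths from $x$ to $C$, and then exploit the claw-free condition together with the absence of an induced $S$ to rotate or extend $C$ into a longer cycle, a contradiction.

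For necessity, the plan is to exhibit, for every pair $(R,S)$ not on the list (up to symmetry), a 2-connected non-hamiltonian $\{R,S\}$-free graph. The main building block is the family of complete bipartite graphs $K_{m,m+1}$ with $m\geq 3$, which are 2-connected, non-hamiltonian, and contain induced copies of $K_{1,3}$, $C_4$ and arbitrarily long induced paths; this rules out every pair in which neither $R$ nor $S$ is a claw and neither is an induced subgraph of some $K_{m,m+1}$. This is supplemented by ad hoc constructions of 2-connected non-hamiltonian claw-free graphs --- for instance suitable line graphs of cubic graphs without a dominating closed trail, or small ``twisted'' configurations --- to rule out the remaining pairs $\{K_{1,3},S\}$ with $S$ small and not on the list (e.g.\ $S=P_7$, $Z_3$ or $K_4$). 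A routine finiteness argument shows that if $R$ or $S$ is sufficiently large, then one of the chosen examples already contains it as an induced subgraph, so the whole necessity direction reduces to a finite case check over all small $R$ and $S$.

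The main obstacle is the sufficiency proof for $S\in\{P_6,W\}$. Unlike the Duffus--Gould--Jacobson argument for $S=N$, where the very restrictive net structure around a longest cycle leaves only a few local configurations to analyse, the conditions $P_6$-free and $W$-free allow substantially more local configurations, so the case analysis around the attachment points of an off-cycle vertex becomes intricate. A possible shortcut for both cases is to apply Ryj\'a\v{c}ek's closure for claw-free graphs: the closure of $G$ is the line graph of a triangle-free graph $H$, and one can try to translate the $\{K_{1,3},S\}$-free condition on $G$ into a structural condition on $H$ forcing hamiltonicity of its line graph, thereby reducing the problem to a cleaner statement about $H$ itself.
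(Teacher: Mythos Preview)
This theorem is quoted from Bedrossian's thesis and is not proved in the paper. The paper only remarks that the ``if'' direction admits an alternative proof by combining Theorem~\ref{ThBuGoJa} with Theorem~\ref{ThRy'}: for every $S$ on the list, the $r$-closure of a 2-connected $\{K_{1,3},S\}$-free graph is $\{K_{1,3},N\}$-free, so all nine cases reduce uniformly to the net case of Duffus--Gould--Jacobson. Your induced-subgraph reduction to the three maximal graphs $P_6,N,W$ is correct and is the standard first step; the ``possible shortcut'' via Ryj\'a\v{c}ek's closure that you mention at the end is precisely the route the paper points to, and it handles $P_6$ and $W$ simultaneously without a separate longest-cycle analysis. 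So on the sufficiency side your plan is fine and ultimately converges with the paper's remark.

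Your necessity sketch, however, contains a concrete error. A complete bipartite graph $K_{m,m+1}$ does \emph{not} contain arbitrarily long induced paths; in fact its longest induced path is $P_3$, since any four vertices alternating between the parts already span a $C_4$. The only connected induced subgraphs of $K_{m,m+1}$ are complete bipartite graphs $K_{a,b}$, so what this family actually buys you is that one of $R,S$ must be (an induced subgraph of) a star $K_{1,s}$; combined with the hypotheses $|R|,|S|\ge 3$ and $R,S\neq P_3$, further claw-free non-hamiltonian examples are then needed to pin this down to $K_{1,3}$ and to force $S$ to be an induced subgraph of $P_6$, $N$ or $W$. Your description of this second stage (``ad hoc constructions \ldots\ line graphs of cubic graphs \ldots'') is too vague to count as a plan: Bedrossian's argument uses several specific infinite families of $2$-connected claw-free non-hamiltonian graphs, and identifying them is the real content of the ``only if'' direction. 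As written, the necessity part is a gap rather than a proof.
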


To research the hamiltonian properties of claw-free graphs, Ryj\'a\v{c}ek~\cite{Ryjacek} introduced a very elegant closure concept for claw-free graphs. We will recall its definition in Section~\ref{closures}.
He showed that this closure, which we denote by $\clr(G)$, is uniquely determined
and claw-free, and that a claw-free graph $G$ is hamiltonian if and only if $\clr(G)$ is hamiltonian. A claw-free graph $G$ for which $G=\clr(G)$ will be called \emph{$r$-closed}.

Using this closure concept, it was shown in another paper by Ryj\'a\v{c}ek~\cite{Ryjacek'} that the list of forbidden subgraphs for $S$ in the characterization given in Theorem~\ref{ThBe} can be reduced to just one graph, namely the net. Moreover, the structure of $r$-closed $\{K_{1,3},N\}$-free graphs was fully determined in~\cite{Ryjacek'}.

\begin{theorem}[Ryj\'a\v{c}ek \cite{Ryjacek'}]\label{ThRy'}
Let $G$ be a 2-connected $\{K_{1,3},S\}$-free graph, where
$S\in\{P_4,P_5,P_6,C_3,Z_1,\\Z_2,B,N,W\}$. Then $\clr(G)$ is $\{K_{1,3},N\}$-free.
\end{theorem}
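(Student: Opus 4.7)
The plan: since $\clr(G)$ is claw-free by Ryj\'a\v{c}ek's closure theorem, only $N$-freeness of $\clr(G)$ needs to be proved. I would argue by induction on the length of the closure sequence $G = G_0, G_1, \ldots, G_k = \clr(G)$, where each $G_{i+1}$ is obtained from $G_i$ by a single local completion at an eligible vertex $x_i$, with invariant that every $G_i$ is $N$-free.

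The inductive step is the key claim: if $G_i$ is 2-connected, claw-free and $N$-free, then $G_{i+1}$ is also $N$-free. Assuming for contradiction that $G_{i+1}$ has an induced $N$ on $\{a,b,c,a',b',c'\}$ (with triangle $\{a,b,c\}$ and pendant edges $aa', bb', cc'$), at least one edge of this $N$ must lie in $G_{i+1} \setminus G_i$, i.e., a new edge joining two vertices of $N_{G_i}(x_i)$. A case analysis on which edge of $N$ is new (triangle edge versus pendant edge), on whether $x_i$ coincides with or is adjacent to vertices of $N$, and on the connectedness of $N_{G_i}(x_i)$ in $G_i$, combined with claw-freeness of $G_i$, locates either an induced $N$ or an induced $K_{1,3}$ already in $G_i$ --- a contradiction.

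For the base case, $G$ must be shown to be $N$-free. When $S \in \{P_4, C_3, Z_1, B, N\}$, $S$ is an induced subgraph of $N$, so $S$-freeness of $G$ immediately gives $N$-freeness. When $S \in \{P_5, P_6, Z_2, W\}$, $S$ is \emph{not} an induced subgraph of $N$, so ruling out an induced $N$ in $G$ requires 2-connectedness: the pendant $a'$ has a neighbor $z \neq a$, and a case analysis on $z$'s adjacencies to $\{a, b, c, b', c'\}$ --- using claw-freeness at $a'$ and at the triangle vertices, and possibly invoking a further 2-connectedness witness at another pendant vertex --- exhibits an induced copy of $S$ in $G$, contradicting $S$-freeness. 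The main obstacle is this case analysis, both in the inductive step and in the base case for $S \in \{P_5, P_6, Z_2, W\}$; the principal technical difficulty throughout is verifying that the forbidden subgraph located in $G$ (or $G_i$) is genuinely \emph{induced}, which requires carefully exhibiting all relevant non-adjacencies.
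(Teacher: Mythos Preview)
The paper does not give its own proof of this statement: Theorem~\ref{ThRy'} is quoted from Ryj\'a\v{c}ek's paper~\cite{Ryjacek'} and used as a black box, so there is no argument here to compare your proposal against directly.

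That said, your outline is sound and is in fact close to how such results are typically established. Your inductive step --- that a local $r$-completion of a claw-free $N$-free graph is again $N$-free --- is correct and can be checked by exactly the kind of case split you describe. If the net in $G_{i+1}$ uses a new pendant edge $a_1b_1$, then both $a_1,b_1\in N(x_i)$ forces $a_2,a_3,b_2,b_3\notin N(x_i)$ (else a non-edge of the net would close up), and then $\{a_1,a_2,a_3,x_i,b_2,b_3\}$ is already a net in $G_i$. If the new edge is a single triangle edge, the third triangle vertex is the center of a claw in $G_i$; if two triangle edges are new, one finds a net in $G_i$ with triangle $\{x_i,a_j,a_k\}$; if all three are new, $x_i$ is a claw center. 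So the invariant does propagate.

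The real content, as you note, is the base case for $S\in\{P_5,P_6,Z_2,W\}$, since $N$ contains none of these as an induced subgraph. Your plan --- pick a second neighbour of a pendant vertex and analyse its adjacencies to the rest of the net, using claw-freeness to restrict the possibilities until a copy of $S$ is forced --- is the right shape, but be warned that this analysis is longer than your sketch suggests: when the auxiliary vertex is adjacent to $a_1$ (or to several triangle vertices) one often has to invoke 2-connectedness at a \emph{second} pendant to push the argument through, and for $S=P_6$ and $S=W$ the branching is genuinely deep. This case analysis is essentially the substance of~\cite{Ryjacek'}, so you should not expect it to collapse to a few lines.
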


Note that using the closure of Ryj\'a\v{c}ek,  Theorems~\ref{ThBuGoJa} and \ref{ThRy'} give a new proof for the `if' part of Theorem~\ref{ThBe}. To recall the structural characterization of $r$-closed $\{K_{1,3},N\}$-free graphs given in~\cite{Ryjacek'}, we first need to describe the following two classes of graphs. We also need these classes in Section~\ref{c-closed}.

\begin{construction}
Denote by $\mathcal{C}_1^N$ the class of graphs illustrated in Figure~\ref{C_1^N} and
obtained by the following construction:
(i) for an integer $t\ge 1$, take $t$ distinct complete graphs $K^1,K^2,\ldots, K^t$ with $|V(K^i)|\geq 4$ for $2\leq i \leq t-1$ (if $t\geq 3$) and $|V(K^i)|\geq 2$ for $i=1,t$;
(ii) choose subsets $U^1_2\subset V(K^1)$ and $U^t_1\subset V(K^t)$ such that $|U^1_2|,|U^t_1|\geq 2$;
(iii) choose two disjoint subsets $U^i_1, U^i_2\subset V(K^i)$, $i=2,\ldots,t-1$,
     such that $|U^i_1|,|U^i_2|\geq 2$ and $|U^i_2|=|U^{i+1}_1|$ for $i=1,2,\ldots,t-1$; and (iv) join the vertices of $U^i_2$ and $U^{i+1}_1$ with a perfect matching for $i=1,2,\ldots,t-1$. Note that we do not require $|U^i_1|=|U^i_2|$.
\end{construction}

\begin{figure}[h]
\centering
\begin{tikzpicture}[scale=0.5]
\draw[fill=gray!30](0,0.15)ellipse(2);
\draw(1,0.15)ellipse(0.6 and 1);
\path(1,0.8) coordinate  (a_1);\draw [fill=black] (a_1) circle (0.1cm);
\path(1,0.5) coordinate  (a);\draw [fill=black] (a) circle (0.1cm);
\draw [fill=black] (1,0.2) circle (0.05cm);
\draw [fill=black] (1,0) circle (0.05cm);
\draw [fill=black] (1,-0.2) circle (0.05cm);
\path(1,-0.5) coordinate  (c);\draw [fill=black] (c) circle (0.1cm);

\draw[fill=gray!30](5,0.15)ellipse(2);
\draw(4,0.15)ellipse(0.6 and 1);
\draw(6,0.15)circle(0.6 and 1.2);
\path(6,0.6) coordinate  (d);\draw [fill=black] (d) circle (0.1cm);
\draw [fill=black] (6,1.05) circle (0.1cm);
\draw [fill=black] (6,0.2) circle (0.05cm);
\draw [fill=black] (6,0) circle (0.05cm);
\draw [fill=black] (6,-0.2) circle (0.05cm);
\path(6,-0.7) coordinate  (f);
\draw [fill=black] (f) circle (0.1cm);

\path(4,0.5) coordinate  (g);\draw [fill=black] (g) circle (0.1cm);
\path(4,0.8) coordinate  (a_2);\draw [fill=black] (a_2) circle (0.1cm);
\draw [fill=black] (4,0.2) circle (0.05cm);
\draw [fill=black] (4,0) circle (0.05cm);
\draw [fill=black] (4,-0.2) circle (0.05cm);
\path(4,-0.5) coordinate  (i);\draw [fill=black] (i) circle (0.1cm);
\draw [line width=0.8] (a)--(g);
\draw [line width=0.8] (a_1)--(a_2);
\draw [line width=0.8] (c)--(i);

 \draw[fill=gray!30](10,0.15)ellipse(2);
 \draw(9,0.15)circle(0.6 and 1.2);
 \draw(11,0.15)circle(0.5 and 0.9);

\path(9,0.6) coordinate  (j);\draw [fill=black] (j) circle (0.1cm);
\draw [fill=black] (9,1.05) circle (0.1cm);
 \draw [fill=black] (9,0.2) circle (0.05cm);
\draw [fill=black] (9,0) circle (0.05cm);
\draw [fill=black] (9,-0.2) circle (0.05cm);
\path(9,-0.7) coordinate  (l);\draw [fill=black] (l) circle (0.1cm);

\draw [line width=0.8] (d)--(j);
\draw [line width=0.8] (f)--(l);
\draw [line width=0.8] (6,1.05)--(9,1.05);

\path(11,0.5) coordinate  (m);\draw [fill=black] (m) circle (0.1cm);
\draw [fill=black] (11,0.85) circle (0.1cm);
 \draw [fill=black] (11,0.2) circle (0.05cm);
\draw [fill=black] (11,0) circle (0.05cm);
\draw [fill=black] (11,-0.2) circle (0.05cm);
\path(11,-0.5) coordinate  (p);\draw [fill=black] (p) circle (0.1cm);

\draw [line width=0.8] (m)--(13,0.5);
\draw [line width=0.8] (p)--(13,-0.4);

\draw [line width=0.8] (11,0.85)--(13,0.85);
\draw [fill=black] (13.5,0) circle (0.05cm);
\draw [fill=black] (14,0) circle (0.05cm);
\draw [fill=black] (14.5,0) circle (0.05cm);

\draw[fill=gray!30](18,0.15)ellipse(2);
\draw(17,0.2)circle(0.6 and 1.4);
\path(17,0.6) coordinate  (t);\draw [fill=black] (t) circle (0.1cm);
\draw [fill=black] (17,1.15) circle (0.1cm);
\draw [fill=black] (17,0.2) circle (0.05cm);
\draw [fill=black] (17,0) circle (0.05cm);
\draw [fill=black] (17,-0.2) circle (0.05cm);
\path(17,-0.6) coordinate  (x);\draw [fill=black] (x) circle (0.1cm);

\draw [line width=0.8] (15,0.6)--(t);
\draw [line width=0.8] (15,-0.6)--(x);
\draw [line width=0.8] (15,1.15)--(17,1.15);

\end{tikzpicture}
\caption{\small The class of graphs $\mathcal{C}_1^N$.}\label{C_1^N}
\end{figure}
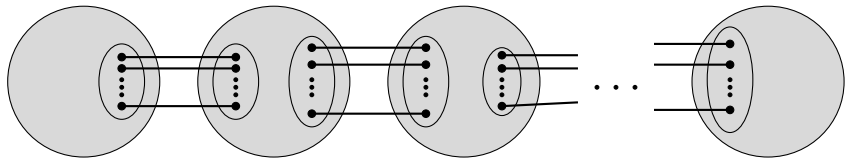

\begin{construction}
Denote by $\mathcal{C}_2^N$ the class of graphs illustrated in Figure~\ref{C_2^N} and obtained by the following construction:
(i) for an integer $t\ge 3$, take $t$ distinct complete graphs $K^1,K^2,\ldots, K^t$ with ${|V(K^i)|}\geq 2$ for $1\leq i \leq t$;
(ii) choose two nonempty disjoint subsets $U^i_1, U^i_2\subset V(K^i)$ such that $|U^i_2|=|U^{i+1}_1|$ for $i=1,2,\ldots,t$ (superscripts modulo $t$; for $t=3$ furthermore assume there exists an integer $j$, $1\leq j\leq 3$ such that $|U^j_1|\geq 2$); and
(iii) for $i=1,2,\ldots,t$ identify $U^i_2$ with $U^{i+1}_1$ if $|U^i_2|=|U^{i+1}_1|=1$ and join the vertices of $U^i_2$ and $U^{i+1}_1$ with a perfect matching if $|U^i_2|=|U^{i+1}_1|\geq 2$, respectively.
\end{construction}

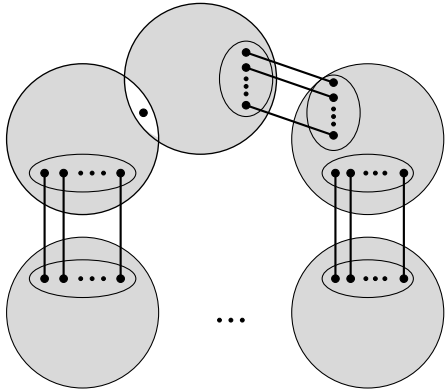
\begin{figure}[h]
\centering
\begin{tikzpicture}[scale=0.5]
\draw[fill=gray!30](0,-2.3)circle(2);
\draw[fill=gray!30](7.5,2.3)circle(2);
\draw[fill=gray!30](7.5,-2.3)circle(2);
\draw(0,-1.4)circle(1.4 and 0.5);

\def\r{2cm}
\fill[gray!30] (0,2.3) circle[radius=\r];
\fill[gray!30] (3.1,3.9) circle[radius=\r];
\begin{scope}
\clip (0,2.3) circle[radius=\r];
\fill[white] (1.3*\r,3.69) circle[radius=0.74*\r];
\end{scope}
\draw[line width=0.5pt] (0,2.3)circle[radius=\r] (1.7*\r,0);
\draw[line width=0.5pt] (3.1,3.9)circle[radius=\r] (1.7*\r,0);
\draw(4.3,3.9)circle(0.7 and 1);
\draw(6.6,3)circle(0.7 and 1);
\draw(0,1.4)circle(1.4 and 0.5);
\path(-1,1.4) coordinate  (a);\draw [fill=black] (a) circle (0.1cm);
\path(-0.5,1.4) coordinate  (b);\draw [fill=black] (b) circle (0.1cm);
\path(1,1.4) coordinate  (c);\draw [fill=black] (c) circle (0.1cm);
\draw [fill=black] (-0.05,1.4) circle (0.05cm);
\draw [fill=black] (0.25,1.4) circle (0.05cm);
\draw [fill=black] (0.55,1.4) circle (0.05cm);
\draw [fill=black] (1.6,3) circle (0.1cm);

\path(-1,-1.4) coordinate  (d);\draw [fill=black] (d) circle (0.1cm);
\path(-0.5,-1.4) coordinate  (e);\draw [fill=black] (e) circle (0.1cm);
\path(1,-1.4) coordinate  (f);\draw [fill=black] (f) circle (0.1cm);

\draw [fill=black] (-0.05,-1.4) circle (0.05cm);
\draw [fill=black] (0.25,-1.4) circle (0.05cm);
\draw [fill=black] (0.55,-1.4) circle (0.05cm);
\draw [line width=0.8] (a)--(d);
\draw [line width=0.8] (b)--(e);
\draw [line width=0.8] (c)--(f);

\path(4.3,4.6) coordinate  (g);\draw [fill=black] (g) circle (0.1cm);
\draw [fill=black] (4.3,4.2) coordinate  (m);\draw [fill=black] (m) circle (0.1cm);
\draw [fill=black] (4.3,3.9) circle (0.05cm);
\draw [fill=black] (4.3,3.7) circle (0.05cm);
\draw [fill=black] (4.3,3.5) circle (0.05cm);
\path(4.3,3.2) coordinate  (h);\draw [fill=black] (h) circle (0.1cm);

\path(6.6,3.8) coordinate  (i);\draw [fill=black] (i) circle (0.1cm);

\draw [fill=black] (6.6,3.1) circle (0.05cm);
\draw [fill=black] (6.6,2.9) circle (0.05cm);
\draw [fill=black] (6.6,2.7) circle (0.05cm);

\path(6.6,3.4) coordinate  (n);\draw [fill=black] (n) circle (0.1cm);
\path(6.6,2.4) coordinate  (l);\draw [fill=black] (l) circle (0.1cm);
\draw [line width=0.8] (g)--(i);
\draw [line width=0.8] (h)--(l);
\draw [line width=0.8] (m)--(n);

\draw(7.5,1.4)circle(1.2 and 0.5);

\draw(7.5,-1.4)circle(1.2 and 0.5);

\path(6.65,1.4) coordinate  (o);\draw [fill=black] (o) circle (0.1cm);
\path(7.05,1.4) coordinate  (p);\draw [fill=black] (p) circle (0.1cm);
\path(8.45,1.4) coordinate  (q);\draw [fill=black] (q) circle (0.1cm);
\path(6.65,-1.4) coordinate  (r);\draw [fill=black] (r) circle (0.1cm);
\path(7.05,-1.4) coordinate  (s);\draw [fill=black] (s) circle (0.1cm);
\path(8.45,-1.4) coordinate  (t);\draw [fill=black] (t) circle (0.1cm);
\draw [fill=black] (7.45,1.4) circle (0.05cm);
\draw [fill=black] (7.7,1.4) circle (0.05cm);
\draw [fill=black] (7.95,1.4) circle (0.05cm);
\draw [fill=black] (7.45,-1.4) circle (0.05cm);
\draw [fill=black] (7.7,-1.4) circle (0.05cm);
\draw [fill=black] (7.95,-1.4) circle (0.05cm);

\draw [line width=0.8] (o)--(r);
\draw [line width=0.8] (p)--(s);
\draw [line width=0.8] (q)--(t);

\draw [fill=black] (4.2,-2.5) circle (0.05cm);
\draw [fill=black] (3.9,-2.5) circle (0.05cm);
\draw [fill=black] (3.6,-2.5) circle (0.05cm);

\end{tikzpicture}
\caption{\small The class of graphs $\mathcal{C}_2^N$.}\label{C_2^N}
\end{figure}

Apart from possibly some small exceptional graphs, the two classes of graphs we just described are precisely all  $r$-closed
$\{K_{1,3},N\}$-free graphs.

\begin{theorem}[Ryj\'a\v{c}ek \cite{Ryjacek'}]\label{Ryjacek'N}
Let $G$ be a graph of order $n\geq 10$. Then $G$ is a 2-connected $r$-closed $\{K_{1,3},N\}$-free graph if and only if $G\in \mathcal{C}_1^N\cup  \mathcal{C}_2^N$.
\end{theorem}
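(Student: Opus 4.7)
\medskip
\noindent\textbf{Proof plan.} The statement is a characterization, so I would split the argument into the two usual directions. The ``if'' direction (sufficiency) is a direct verification: for any graph $G\in\mathcal{C}_1^N\cup\mathcal{C}_2^N$, one checks the four required properties in turn. For $2$-connectedness, observe that the construction forces every would-be cut vertex to be bypassed by at least two matching edges (or by the identifications when matching partners are single vertices, together with the condition $|U^j_1|\geq 2$ for some $j$ in the case $t=3$ of $\mathcal{C}_2^N$). Claw-freeness and $r$-closedness hold simultaneously: for every vertex $v$ lying in clique $K^i$, the neighborhood $N_G(v)$ is the disjoint union of the clique $V(K^i)\setminus\{v\}$ and the (possibly empty) set of matching partners of $v$, which is itself a clique contained in $V(K^{i-1})$ or $V(K^{i+1})$. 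Hence $N_G(v)$ induces either a clique or the disjoint union of two cliques, which simultaneously rules out an induced $K_{1,3}$ at $v$ and excludes $v$ from being eligible for local completion. Finally, $N$-freeness is checked by noting that every triangle of $G$ lies inside a single $K^i$, so any induced net would need three independent ``legs'' leaving $K^i$, and the matching structure admits at most two directions of escape, giving a contradiction.

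For the ``only if'' direction, the strategy I would use exploits Ryj\'a\v{c}ek's structural description of $r$-closed claw-free graphs: a $2$-connected $r$-closed claw-free graph $G$ is exactly the line graph $L(H)$ of some $2$-edge-connected triangle-free graph $H$. Under this identification, the maximal cliques of $G$ correspond to the stars $E_H(x)$ at the vertices $x$ of $H$ (since $H$ is triangle-free), and the condition that $G=L(H)$ is $N$-free translates into a clean statement about $H$: there is no vertex $v\in V(H)$ of degree at least $3$ admitting three neighbors $w_1,w_2,w_3$ together with pairwise distinct second neighbors $u_1,u_2,u_3\notin\{v,w_1,w_2,w_3\}$ with $u_iw_i\in E(H)$ and the $u_i$'s pairwise sharing no endpoint with the other $w_j$'s. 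I would verify this equivalence by a direct translation, exactly as in the induced-subgraph dictionary between $H$ and $L(H)$.

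The core of the proof is then a classification of such $H$. If $\Delta(H)\leq 2$, then $H$ is a cycle $C_m$ (by $2$-edge-connectivity and connectedness), and $L(H)=C_m\in\mathcal{C}_2^N$ with each $K^i$ an edge. Otherwise, pick a vertex $v$ with $d_H(v)\geq 3$ and use the forbidden ``spider'' configuration together with $2$-edge-connectivity to show that the neighbors of $v$ fall into at most two classes according to which ``escape path'' they use out of the star at $v$; iterating this analysis along the graph $H$ yields a linear or cyclic sequence of ``thick'' vertices $v_1,v_2,\ldots,v_t$, joined consecutively by matching-like bundles of paths of length $2$. Translating back via $L$ turns the star at $v_i$ into the clique $K^i$, the common second neighbors into the matched pairs $U^i_2\leftrightarrow U^{i+1}_1$, and the ``linear vs.\ cyclic'' dichotomy into membership of $\mathcal{C}_1^N$ vs.\ $\mathcal{C}_2^N$. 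The lower bounds $|V(K^i)|\geq 4$ for internal cliques in $\mathcal{C}_1^N$ and $|U^i_j|\geq 2$ come out of the analysis as lower bounds on $d_H(v_i)$ and on the multiplicity of the escape paths, both forced by $2$-edge-connectivity together with the spider-forbidding condition.

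The main obstacle I anticipate is controlling small exceptional configurations in $H$ that happen to be both triangle-free and spider-free but do not fit the linear/cyclic pattern (short even cycles with chords, prisms, small bipartite graphs, etc.). These are precisely what the hypothesis $n\geq 10$ is designed to eliminate: once $|E(H)|=|V(G)|\geq 10$, the thick-vertex sequence along $H$ is long enough that the collapse into a path-of-stars or cycle-of-stars structure becomes unavoidable, and the remaining low-order cases can be ruled out by hand. The careful bookkeeping of which $K^i$ are allowed to be ``end'' cliques of size $\geq 2$ versus ``internal'' cliques of size $\geq 4$ is where most of the case analysis will sit.
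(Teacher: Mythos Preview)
The paper does not contain a proof of this statement at all: Theorem~\ref{Ryjacek'N} is quoted from Ryj\'a\v{c}ek's earlier paper~\cite{Ryjacek'} and used as a black box (notably inside the proofs of Theorems~\ref{ThCP} and~\ref{ThCPQ}). There is therefore nothing in the present paper to compare your proposal against.

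That said, your plan is the natural one and is essentially how the result is proved in~\cite{Ryjacek'}. The sufficiency verification is routine and your sketch is accurate; in particular, your observation that every triangle of $G$ lives inside a single $K^i$ and that each $K^i$ has at most two ``escape directions'' is exactly the right way to see $N$-freeness. For necessity, passing to the triangle-free preimage $H$ with $G=L(H)$ (via Theorem~\ref{ThRy}(2)) is the standard move, and your translation of $N$-freeness of $L(H)$ into the forbidden spider configuration in $H$ is correct. One small refinement: in the line-graph dictionary, the perfect matchings between $U^i_2$ and $U^{i+1}_1$ correspond to degree-$2$ vertices of $H$ sitting between the ``thick'' vertices $v_i$ and $v_{i+1}$, while the identification case $|U^i_2|=|U^{i+1}_1|=1$ in $\mathcal{C}_2^N$ corresponds to a direct edge $v_iv_{i+1}\in E(H)$; keeping these two mechanisms separate will make the bookkeeping in your classification cleaner. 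Your anticipation that the hypothesis $n\geq 10$ is there to kill sporadic small preimages is also correct.
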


We now turn to a common relaxation of degree conditions and forbidden subgraph conditions. Instead of imposing degree conditions on all nonadjacent pairs of vertices or forbidding certain subgraphs, the idea is to impose degree conditions on some nonadjacent pairs of vertices of such subgraphs if they appear as induced subgraphs in the graph. Before we can give more details, we first need some additional terminology and notation.

Let $G$ be a graph. We say that a vertex $u\in V(G)$ is \emph{heavy} in $G$ if $d(u)\geq \frac{|V(G)|}{2}$; a pair of vertices $\{u,v\}$  is an \emph{$a$-heavy pair} (or \emph{adjacent heavy pair}) of $G$ if $d(u)+d(v)\geq |V(G)|$ and $uv\in E(G)$. Note that if $\{u,v\}$ is an $o$-heavy or $a$-heavy pair, then at least one of $u$ and $v$ is a heavy vertex. Also note that Ore's Theorem (Theorem~\ref{ThOr}) states that $G$ is
hamiltonian if $|V(G)|\ge 3$ and every pair of nonadjacent vertices of $G$ is an
$o$-heavy pair.

Let $G'$ be an induced subgraph of $G$. If $G'$ contains an $o$-heavy pair of $G$, then we say that $G'$ is an \emph{$o$-heavy subgraph} of $G$ (or $G'$ is \emph{$o$-heavy} in $G$). For a given graph $H$, the graph $G$ is \emph{$H$-$o$-heavy} if every induced subgraph of $G$ isomorphic to $H$ is $o$-heavy in $G$. Note that an $H$-free graph is trivially $H$-$o$-heavy, and if $H_1$ is an induced subgraph of $H_2$, then an $H_1$-$o$-heavy graph is also $H_2$-$o$-heavy. For a family $\mathcal{H}$ of graphs, $G$ is \emph{$\mathcal{H}$-$o$-heavy} if $G$ is $H$-$o$-heavy for every $H\in\mathcal{H}$.

Li et al.~\cite{LiRyjacekWangZhang} completely characterized all pairs of $o$-heavy subgraphs for a 2-connected graph to be hamiltonian, thereby extending Bedrossian's result (Theorem~\ref{ThBe}).

\begin{theorem}[Li et al.~\cite{LiRyjacekWangZhang}]\label{ThLiRyWaZh}
Let $R$ and $S$ be connected graphs of order at least 3 with $R,S\neq P_3$, and let $G$ be a 2-connected graph. Then $G$ being $\{R,S\}$-$o$-heavy implies $G$ is hamiltonian if and only if (up to symmetry) $R=K_{1,3}$ and $S=P_4$, $P_5$, $C_3$, $Z_1$, $Z_2$, $B$, $N$ or $W$.
\end{theorem}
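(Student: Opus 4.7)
The plan splits along the two directions of the characterization.

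For the \emph{necessity} direction, I would leverage Bedrossian's theorem (Theorem~\ref{ThBe}). Since being $H$-free is a strictly stronger condition than being $H$-$o$-heavy, any pair $\{R,S\}$ for which the $\{R,S\}$-$o$-heavy hypothesis forces hamiltonicity on 2-connected graphs must in particular make every 2-connected $\{R,S\}$-free graph hamiltonian. Hence by Theorem~\ref{ThBe} the candidate list is already reduced to $R=K_{1,3}$ and $S\in\{P_4,P_5,P_6,C_3,Z_1,Z_2,B,N,W\}$. To complete the necessity, I would eliminate $P_6$ by exhibiting a 2-connected non-hamiltonian graph that is $\{K_{1,3},P_6\}$-$o$-heavy. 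A natural template is a vertex-blow-up of a small hypohamiltonian (or merely non-hamiltonian, highly symmetric) skeleton by complete graphs whose sizes are tuned so that the degrees are high enough to force every induced $P_6$ (and every induced $K_{1,3}$) to contain a nonadjacent pair of degree sum $\ge n$, while the global structure of the skeleton obstructs a hamiltonian cycle.

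For the \emph{sufficiency} direction, I would treat the eight admissible pairs by reduction to the net case. The guiding idea is that, among the eight, $\{K_{1,3},N\}$ is the weakest, so it suffices to show: (1)~every 2-connected $\{K_{1,3},N\}$-$o$-heavy graph is hamiltonian; and (2)~for each $S\in\{P_4,P_5,C_3,Z_1,Z_2,B,W\}$, every 2-connected $\{K_{1,3},S\}$-$o$-heavy graph is $\{K_{1,3},N\}$-$o$-heavy, or can be transformed into such a graph without changing hamiltonicity. Step~(2) is a case analysis: for each $S$, assume $G$ contains an induced net $N$ and derive, using the $S$-$o$-heavy property applied to an induced copy of $S$ on or around $N$, an $o$-heavy nonadjacent pair inside the net. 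Step~(1) would be handled by applying Bondy-Chv\'atal's $o$-closure (Theorem~\ref{ThBoCh}) followed by a claw-resolution step in the spirit of Ryj\'a\v{c}ek's closure, to replace $G$ by a 2-connected $\{K_{1,3},N\}$-free graph with the same hamiltonicity, and then invoking Theorem~\ref{ThBuGoJa}.

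The principal obstacle is that neither the $o$-closure nor a claw-type closure automatically preserves the $H$-$o$-heavy property for a specific $H$: adding the edge of an $o$-heavy pair can destroy some induced copies of $H$ (which is helpful), but it can also create new induced copies whose nonadjacent pairs are no longer guaranteed to be $o$-heavy (which is not). For each of the eight pairs one therefore has to verify that every induced copy of $K_{1,3}$ or of $S$ that survives in, or emerges during, the closure still inherits an $o$-heavy nonadjacent pair from a corresponding configuration in $G$. I expect the net ($N$) and wounded ($W$) cases to be the most technically demanding, because their triangle-rich local structure gives the most room for new induced copies to emerge as edges are added, forcing a detailed subcase analysis; by contrast, the cases $S\in\{P_4,P_5,C_3,Z_1,Z_2,B\}$ should yield relatively directly since each of these is a small induced subgraph of the net (or of a graph closely related to it), making Step~(2) almost mechanical once the closure argument is set up.
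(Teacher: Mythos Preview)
Your necessity argument is fine and matches what the paper (and the original reference \cite{LiRyjacekWangZhang}) does: reduce to Bedrossian's list via Theorem~\ref{ThBe} and then kill $P_6$ by an explicit construction.

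For sufficiency, however, there is a real gap. Your Step~(1) hinges on closing a $\{K_{1,3},N\}$-$o$-heavy graph all the way to a $\{K_{1,3},N\}$-\emph{free} graph so that Theorem~\ref{ThBuGoJa} applies. This fails: the paper states explicitly (just before Theorem~\ref{ThCN}) that the $c$-closure of a $\{K_{1,3},N\}$-$o$-heavy graph is \emph{not} necessarily $N$-free. What survives in the closure is the weaker property that every induced net is \emph{$p$-heavy} (its triangle contains an adjacent pair of large degree sum), and for the $W$ case only the still weaker property \emph{$p$- or $q$-heavy}. Neither the $o$-closure nor a Ryj\'a\v{c}ek-type step removes these residual nets, so you cannot invoke Theorem~\ref{ThBuGoJa}; one needs instead the structural characterization of $c$-closed claw-free $N$-$pq$-heavy graphs (Theorems~\ref{ThCP} and \ref{ThCPQ}) to conclude hamiltonicity.

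Your Step~(2) is also over-optimistic. The reduction ``$\{K_{1,3},S\}$-$o$-heavy $\Rightarrow$ $\{K_{1,3},N\}$-$o$-heavy'' works only when $S$ is an induced subgraph of $N$, i.e.\ for $S\in\{P_4,C_3,Z_1,B\}$. Neither $P_5$, nor $Z_2$, nor $W$ occurs as an induced subgraph of the net, so an induced net in $G$ need not carry any induced copy of $S$ ``on or around'' it, and the mechanical derivation you describe does not go through. In the paper the $P_5$ and $Z_2$ cases are handled by a separate argument (proof of Theorem~\ref{ThCPCZN}) that works at the level of the $c$-closure, and the $W$ case genuinely produces a new phenomenon---the $q$-heavy nets of Theorem~\ref{ThCW}---which your framework does not anticipate. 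So the route the paper takes (\v{C}ada's $c$-closure $\to$ $N$-$pq$-heavy $\to$ structural classification $\to$ hamiltonicity) is not just a stylistic choice but fills exactly the holes where your plan breaks.
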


Note that the only graph that appears in the list for $S$ in Theorem~\ref{ThBe} but
misses here is $P_6$. This is for a good reason. In~\cite{LiRyjacekWangZhang}, the authors presented a class of $\{K_{1,3},P_6\}$-$o$-heavy non-hamiltonian graphs.

Motivated by Ryj\'a\v{c}ek's closure theory for claw-free graphs, \v{C}ada~\cite{Cada} proposed a closure theory for claw-$o$-heavy graphs which we will recall in Section~\ref{closures}. He showed that his newly introduced closure, which we will refer to as the \emph{$c$-closure}, is uniquely determined and preserves the length of a longest path and cycle, so also preserves hamiltonicity.

It is not difficult to see that the $o$-closure of a graph $G$ is the
minimum spanning supergraph of $G$ with no $o$-heavy pairs (see~\cite{BondyChvatal}),
and that the $r$-closure of a claw-free graph $G$ is the minimum spanning
supergraph of $G$ with no induced diamond (see Figure~\ref{fig1} and \cite{Ryjacek}).
As we will show in the next section, the $c$-closure of a claw-$o$-heavy graph $G$ is the minimum spanning supergraph of $G$ with no $o$-heavy pair and no induced diamond.

A nice feature of \v{C}ada's closure theory is that if $G$ is claw-$o$-heavy, then the $c$-closure of $G$ is claw-free. Since Ryj\'a\v{c}ek showed that the $r$-closure of a
2-connected $\{K_{1,3},N\}$-free graph is $\{K_{1,3},N\}$-free (see Theorem~\ref{ThRy'}), one may ask whether the $c$-closure of $G$ is also $N$-free if $G$ is $\{K_{1,3},N\}$-$o$-heavy? The answer is negative.
However, as we will show in Section~\ref{heavy} of this paper,  the $c$-closure of any 2-connected $\{K_{1,3},S\}$-$o$-heavy graph is claw-free and $N$-$pq$-heavy (to be defined later), where $S=P_4$, $P_5$, $C_3$, $Z_1$, $Z_2$, $B$, $N$ or
$W$.

We recall the two relevant closures and the related structural results that we need in our later proofs in Section~\ref{pre}. The proofs of our main structural results are postponed to Section~\ref{proofs}. Moreover, as a counterpart of Ryj\'a\v{c}ek's structural characterization of 2-connected $r$-closed $\{K_{1,3},N\}$-free graphs (Theorem~\ref{Ryjacek'N}), in Section~\ref{c-closed} we fully describe the structure of 2-connected, $c$-closed, claw-free and $N$-$pq$-heavy graphs.

\section{Some preliminaries}\label{pre}
In this section, we recall the two closure theories developed by Ryj\'a\v{c}ek and \v{C}ada, respectively (which we call the $r$-closure and the $c$-closure, respectively), and present some useful lemmas and theorems which we need in the proofs of our results.

\subsection{Closures for claw-free and claw-$o$-heavy graphs}\label{closures}

During a workshop that was held in hotel the H\"olterhof near Enschede in 1995~\cite{Eidma}, Ryj\'a\v{c}ek came up with a closure concept for claw-free graphs which became known as Ryj\'a\v{c}ek's closure. The details and proofs were later published in~\cite{Ryjacek}. We give a short summary for later reference.

For this purpose, a vertex $x$ of a graph $G$ is called \emph{r-eligible} in $G$ if $x$ has a connected non-complete neighborhood in $G$. The \emph{local {$r$}-completion of $G$ at $x$}, denoted by $G'_x$, is the graph with vertex set $V(G)$ and edge set
$E(G)\cup\{uv: u,v\in N(x)\}$. It was proved in~\cite{Ryjacek} that if $G$ is claw-free, then so is $G'_x$, and if $x$ is $r$-eligible, then $G$ is hamiltonian if and only if $G'_x$ is hamiltonian. The \emph{$r$-closure} of a claw-free graph $G$, denoted by $\clr(G)$, is the graph obtained by recursively performing the local $r$-completion
operation at $r$-eligible vertices as long as this is possible.

Ryj\'a\v{c}ek~\cite{Ryjacek} proved the following result.

\begin{theorem}[Ryj\'a\v{c}ek \cite{Ryjacek}]\label{ThRy}
Let $G$ be a claw-free graph. Then: \\
(1) $\clr(G)$ is uniquely determined;\\
(2) $\clr(G)$ is the line graph of a $C_3$-free graph; and \\
(3) $\clr(G)$ is hamiltonian if and only if $G$ is hamiltonian.
\end{theorem}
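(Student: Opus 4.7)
The plan is to handle the three parts in turn, each time reducing to the behavior of a single local $r$-completion step. For (1), I would first verify that local $r$-completion at an $r$-eligible vertex preserves claw-freeness: any induced claw of $G'_x$ would have to use at least one of the newly added edges, all of which lie inside $N_G(x)$, and a short case analysis using claw-freeness of $G$ together with connectedness of $N_G(x)$ rules this out. Given this, uniqueness of $\clr(G)$ follows by a standard confluence argument: whenever $x$ and $y$ are both $r$-eligible, a direct equality check $E((G'_x)'_y)=E((G'_y)'_x)$ (taking care of the case when completing at $x$ makes $y$ no longer $r$-eligible by rendering $N(y)$ complete) shows that the terminal graph does not depend on the order in which eligible vertices are processed.

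For (2), assume $G$ is $r$-closed. Since $G$ is claw-free, for every vertex $x$ the induced subgraph $G[N(x)]$ has independence number at most $2$. Being non-eligible then means $N(x)$ is either complete or disconnected; in the disconnected case it splits into exactly two cliques. Using these ``sides'' I would construct a \emph{Krausz edge partition}: a family of cliques such that every edge lies in a unique clique and every vertex lies in at most two. By Whitney's theorem this realizes $G$ as a line graph $L(H)$. Triangle-freeness of $H$ follows because a triangle in $H$ would produce, in $L(H)=G$, a vertex whose neighborhood induces a non-complete connected graph---an $r$-eligible configuration---contradicting $r$-closedness.

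Part (3) is the main obstacle. The forward direction is immediate since $G \subseteq \clr(G)$ and hamiltonicity is monotone in edges. For the reverse, by induction on the number of completion steps it suffices to show that if $G$ is claw-free, $x$ is $r$-eligible, and $G'_x$ has a Hamilton cycle, then so does $G$. I would choose a Hamilton cycle $C$ of $G'_x$ minimizing the number of edges in $E(G'_x)\setminus E(G)$, and assume for contradiction that this count is positive. Pick such a \emph{new} edge $uv$ on $C$; by definition $u,v\in N_G(x)$, and since $C$ is Hamiltonian $x$ itself also lies on $C$, with two $G$-neighbors $x^-,x^+$ on $C$. Using a shortest $u$--$v$ path in the connected subgraph $G[N_G(x)]$, I would splice that path together with $x$ into $C$ in place of $uv$, rerouting the two edges $xx^-$, $xx^+$ as needed to obtain a Hamilton cycle with strictly fewer new edges, contradicting the choice of $C$. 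The delicate part is the case analysis that controls how the $u$--$v$ path interacts with the neighbors of $x$ on $C$; claw-freeness is invoked repeatedly to ensure that whenever two candidate vertices appear to be non-adjacent, a third vertex adjacent to both exists (otherwise $x$ together with three pairwise non-adjacent neighbors would give an induced claw), so that the needed detour can always be closed up.
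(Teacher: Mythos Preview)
The paper does not contain a proof of this theorem. Theorem~\ref{ThRy} is stated as a known result of Ryj\'a\v{c}ek and cited from \cite{Ryjacek}; the authors simply write ``Ryj\'a\v{c}ek~\cite{Ryjacek} proved the following result'' and move on. There is therefore no ``paper's own proof'' to compare your proposal against.

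That said, a few comments on your sketch relative to the original argument in \cite{Ryjacek}. Your outlines for (1) and (2) are reasonable and close in spirit to what Ryj\'a\v{c}ek does (he in fact derives uniqueness from (2) rather than via a confluence argument, by exhibiting $\clr(G)$ as $L(H)$ for a specific triangle-free $H$ built from $G$, but your route is also viable). For (3), however, your plan is too optimistic. The idea of choosing a Hamilton cycle of $G'_x$ minimizing the number of new edges and then ``splicing in'' a short $u$--$v$ path inside $G[N(x)]$ does not straightforwardly yield a Hamilton cycle: the vertices of that path already lie on $C$, so inserting the path creates repeated vertices, and the rerouting you describe (``rerouting the two edges $xx^-,xx^+$ as needed'') is not a local fix. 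Ryj\'a\v{c}ek's actual proof of the circumference-preservation step is considerably more delicate: it works with a longest cycle and analyzes, via claw-freeness, the pattern in which vertices of $N(x)$ occur along $C$, showing that consecutive new edges can be replaced in blocks. If you want to reconstruct the argument, the crucial structural fact is that the vertices of $N_G[x]$ appear on $C$ as a single segment (up to rerouting), not scattered; your sketch does not establish this.
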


Motivated by Ryj\'a\v{c}ek's closure theory for claw-free graphs,
and inspired by closure concepts in \cite{HJB} and \cite{BandT},
\v{C}ada~\cite{Cada} proposed a closure theory for claw-$o$-heavy
graphs. Let $G$ be a claw-$o$-heavy graph on $n$ vertices, and let $x\in V(G)$. Following~\cite{Cada}, we let  $E^{BC}_x= \{uv \mid u, v \in N(x), uv \notin E(G), d(u) + d(v) \ge n\}$. Let $G^{BC}_x$ be the graph with vertex set $V(G)$ and edge set $E(G) \cup E^{BC}_x$. Now we call $x$ a \emph{$c$-eligible} vertex of $G$ if $N(x)$ is not a clique in $G^{BC}_x$ and one of the followings is true:\\
(i) $G^{BC}_x[N(x)]$ is connected; or\\
(ii) $G^{BC}_x[N(x)]$ is the disjoint union of two cliques $C_1$ and
$C_2$, and $x$ is contained in an $o$-heavy pair $\{x,z\}$ of $G$ such
that $zy_1,zy_2\in E(G)$ for some $y_1\in C_1$ and $y_2\in C_2$.

Let $x$ be a $c$-eligible vertex of $G$. Then the graph
$G_x^o$ with vertex set $V(G)$ and edge set $E(G)\cup \{uv \mid u, v \in
N(x), uv \notin E(G)\}$  is called the \emph{local $c$-completion of $G$ at $x$}.

The closure concept of \v{C}ada is based on the following lemma.

\begin{lemma}[\v{C}ada \cite{Cada}]\label{LeComplation}
Let $G$ be a claw-$o$-heavy graph, and let $x$ be a $c$-eligible vertex of $G$. Then: \\
(1) for every vertex $y\in N(x)$, $d_{G_x^o}(y)\geq d_{G_x^o}(x)$;\\
(2) the graph $G_x^o$ is claw-$o$-heavy; and\\
(3) $G$ is hamiltonian if and only if $G_x^o$ is hamiltonian.
\end{lemma}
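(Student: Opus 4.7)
The plan is to handle the three parts in order, with the first being a short direct computation, the second a case analysis reducing to the claw-$o$-heaviness of $G$, and the third the technical core requiring a rerouting argument for a Hamilton cycle.

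For part (1), I would simply track degrees through the local completion. No edges incident to $x$ are added when passing from $G$ to $G_x^o$, so $d_{G_x^o}(x) = d_G(x)$. For any $y \in N(x)$, the completion makes $y$ adjacent to $x$ and to every other vertex of $N(x)$, while its neighbors outside $N(x) \cup \{x\}$ are unchanged. Thus
\[
d_{G_x^o}(y) = 1 + (d_G(x) - 1) + |N_G(y) \setminus (N(x)\cup\{x\})| \geq d_G(x) = d_{G_x^o}(x),
\]
which gives (1).

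For part (2), I would argue by contradiction: suppose there is an induced claw $K$ in $G_x^o$ with center $c$ and leaves $a, b, d$ that is not $o$-heavy in $G_x^o$. Since the three leaves are pairwise nonadjacent in $G_x^o$ and $N(x)$ induces a clique in $G_x^o$, at most one leaf lies in $N(x)$. If none does, then every edge of $K$ is already present in $G$ (new edges have both endpoints in $N(x)$), so $K$ is an induced claw in $G$; by claw-$o$-heaviness of $G$ it contains an $o$-heavy pair, which remains nonadjacent in $G_x^o$ (the nonadjacent pairs in $K$ have at most one vertex in $N(x)$, so no new edge joins them) and whose degree sum can only have grown. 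If exactly one leaf, say $a$, lies in $N(x)$, I would split on whether $ca$ is old or new. When $ca \in E(G)$ the previous argument still applies. When $ca$ is new, then $c \in N(x)$ as well, and since $b, d \notin N(x)$ we have $xb, xd \notin E(G)$ and also $bd \notin E(G)$; hence $\{c, x, b, d\}$ is an induced claw in $G$ with center $c$. Applying claw-$o$-heaviness of $G$ to this claw produces an $o$-heavy pair among $\{x, b\}$, $\{x, d\}$, $\{b, d\}$; using part (1), the pair $\{b, d\}$, respectively $\{a, b\}$ or $\{a, d\}$ (replacing $x$ by $a$, whose $G_x^o$-degree is at least $d_G(x)$), becomes an $o$-heavy pair of $K$ in $G_x^o$, contradicting the assumption.

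For part (3), the implication $G$ hamiltonian $\Rightarrow G_x^o$ hamiltonian is immediate since $G \subseteq G_x^o$. For the converse, I would take a Hamilton cycle $C$ of $G_x^o$ using the minimum number of edges from $E(G_x^o) \setminus E(G)$ and assume for contradiction that this minimum is positive. Picking a new edge $uv$ on $C$ (with $u, v \in N(x)$ and $uv \notin E(G)$), I would use the fact that the two $C$-neighbors of $x$ also lie in $N(x)$ (edges of $C$ at $x$ are old) together with the structural condition built into $c$-eligibility of $x$ in order to reroute $C$ through $x$ and produce a Hamilton cycle with strictly fewer new edges. In case (i) of the definition, the connectedness of $G^{BC}_x[N(x)]$ provides a short $u$--$v$ path consisting of edges of $G$ and $E^{BC}_x$ that can be spliced into $C$; in case (ii), the distinguished partner $z$ of the $o$-heavy pair $\{x,z\}$ supplies the needed bridge between the two cliques of $G^{BC}_x[N(x)]$. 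The main obstacle is precisely this rerouting step: one must set it up so that every edge used to bypass $uv$ is either already on $C$, or lies in $E(G)$, or at worst lies in $E^{BC}_x$ and can itself be absorbed by iterating the same argument, and so that the substitution strictly decreases the count of new edges rather than merely shifting them, giving the desired contradiction with the minimality of $C$.
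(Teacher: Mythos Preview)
The paper itself does not prove this lemma; it is quoted from \v{C}ada~\cite{Cada} and used as a black box, so there is no in-paper proof to compare against.

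On the merits of your proposal: your arguments for (1) and (2) are correct and essentially complete. Part (3), however, has a genuine gap. The plan of taking a Hamilton cycle $C$ of $G_x^o$ with the fewest new edges and then ``splicing in'' a short $u$--$v$ path from $G^{BC}_x[N(x)]$ cannot work as written: every internal vertex of such a path lies in $N(x)$ and is therefore already on $C$ (it is a Hamilton cycle), so there is nothing external to splice in. What one actually needs is a rotation/crossing argument on the Hamilton path $P$ obtained from $C$ by deleting $uv$, showing that the structural hypotheses on $x$ (connectedness of $G^{BC}_x[N(x)]$ in case~(i), or the auxiliary vertex $z$ with $d_G(x)+d_G(z)\ge n$ in case~(ii)) force enough well-placed chords on $P$ to close it into a Hamilton cycle with strictly fewer new edges. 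Controlling this count is precisely the delicate point: replacing the edge $uv$ by an edge of $E^{BC}_x$ does not by itself reduce the number of non-$G$-edges, and your final sentence (``at worst lies in $E^{BC}_x$ and can itself be absorbed by iterating the same argument'') acknowledges the difficulty without supplying the mechanism that guarantees a strict decrease.
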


Based on this lemma, the \emph{$c$-closure} of a claw-$o$-heavy graph $G$, denoted by
$\clc(G)$, is the graph obtained by recursively performing the local {$c$}-completion operation at $c$-eligible vertices as long as this is possible. In other words, $\clc(G)$ is defined by a sequence of graphs $G_1,G_2,\ldots,G_t$, and vertices $x_1,x_2\ldots,x_{t-1}$ such that:\\
(i) $G_1=G$, $G_t=\clc(G)$; \\
(ii) $x_i$ is a $c$-eligible vertex of $G_i$, $G_{i+1}=(G_i)^o_{x_i}$, $1\leq i\leq t-1$; and\\
(iii) $\clc(G)$ has no $c$-eligible vertices.

\v{C}ada \cite{Cada}  proved the following result.

\begin{theorem}[\v{C}ada \cite{Cada}]\label{ThCa}
Let $G$ be a claw-$o$-heavy graph. Then: \\
 (1) $\clc(G)$ is uniquely determined;\\
 (2) $\clc(G)$ is the line graph of a $C_3$-free graph; and\\
 (3) $G$ is hamiltonian if and only if $\clc(G)$ is hamiltonian.
\end{theorem}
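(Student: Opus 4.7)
The plan is to address the three parts in the order (3), (1), (2); part (3) is essentially immediate, while most of the content lies in (1) and (2). For (3), by the defining sequence $G=G_1,G_2,\ldots,G_t=\clc(G)$, each step $G_{i+1}=(G_i)^o_{x_i}$ is a local $c$-completion at a $c$-eligible vertex, and Lemma~\ref{LeComplation}(2,3) guarantees that each step preserves both claw-$o$-heaviness and hamiltonicity, so a straightforward induction on $t$ suffices.

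For part (1), I would follow the standard confluence approach going back to Bondy--Chv\'atal and adapted by Ryj\'a\v{c}ek. The central point is a commutation lemma: if $x$ and $y$ are both $c$-eligible in some claw-$o$-heavy $H$, then either $y$ is still $c$-eligible in $H^o_x$, or else $N_H(y)$ has already been completed to a clique by the operation at $x$ (so there is nothing left to do at $y$). The main technical tool here is Lemma~\ref{LeComplation}(1), which guarantees that degrees of neighbours of $x$ do not decrease; this ensures that every $o$-heavy pair survives and prevents eligibility from being lost. Once the commutation lemma is in place, uniqueness follows by induction on the number of added edges: given any two closure sequences, one shows that both terminate at the same graph, which is equivalently characterized as the minimum spanning supergraph of $G$ obtainable from $G$ by local $c$-completions that admits no $c$-eligible vertex.

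Part (2) carries the real content. The classical Beineke-type characterization states that a graph is the line graph of a triangle-free graph if and only if it is $\{K_{1,3},D\}$-free (with $D$ the diamond). Combined with Lemma~\ref{LeComplation}(2), it suffices to prove that any claw-$o$-heavy graph $H$ without $c$-eligible vertices is both claw-free and diamond-free. For claw-freeness, suppose an induced claw centred at $x$ with leaves $a,b,c$; since $H$ is claw-$o$-heavy, some nonadjacent pair among $\{a,b,c\}$ is $o$-heavy, forcing at least one new edge in $H^{BC}_x$ and hence $N(x)$ is not a clique in $H^{BC}_x$. I would then split on the two cases in the definition of $c$-eligibility: if $H^{BC}_x[N(x)]$ is connected, $x$ is immediately $c$-eligible, a contradiction. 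Otherwise $H^{BC}_x[N(x)]$ decomposes into components, and a careful degree analysis using the $o$-heavy pair on $\{a,b,c\}$ forces a structure in which condition (ii) of $c$-eligibility is met (producing a vertex $z$ forming an $o$-heavy pair with $x$ whose neighbourhood meets two cliques of the decomposition). The diamond-free case proceeds analogously: one takes a degree-$3$ vertex of the putative induced diamond and shows it must be $c$-eligible. The main obstacle throughout is precisely this second case of $c$-eligibility: locating the auxiliary vertex $z$ requires a delicate use of the heavy-pair structure together with the exact clique decomposition of $H^{BC}_x[N(x)]$, and it is here that the full strength of \v{C}ada's definition (liberalising Ryj\'a\v{c}ek's purely structural eligibility) is essential.
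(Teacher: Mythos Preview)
The paper does not prove this theorem at all: Theorem~\ref{ThCa} is quoted from \v{C}ada~\cite{Cada} and used as a black box (the paper's own contributions, Lemmas~\ref{LeSupergraph} and~\ref{LeClosure}, are built on top of it). So there is no ``paper's own proof'' to compare your proposal against; what you have written is an attempt to reconstruct \v{C}ada's argument rather than anything appearing in this manuscript.

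That said, your outline has the right architecture --- (3) via Lemma~\ref{LeComplation}, (1) by confluence, (2) via the $\{K_{1,3},D\}$-free characterisation of line graphs of triangle-free graphs --- but the sketch for (2) contains a genuine slip. You write that an $o$-heavy pair among the leaves $\{a,b,c\}$ ``forc[es] at least one new edge in $H^{BC}_x$ and hence $N(x)$ is not a clique in $H^{BC}_x$.'' The implication is backwards: adding edges to $N(x)$ can only push it \emph{toward} being a clique, not away. In particular, if all three pairs $\{a,b\},\{a,c\},\{b,c\}$ happen to be $o$-heavy and $N(x)=\{a,b,c\}$, then $H^{BC}_x[N(x)]$ is complete and $x$ is \emph{not} $c$-eligible by definition; your case split never reaches this scenario. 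The actual argument has to first establish that a closed claw-$o$-heavy graph contains no $o$-heavy pair at all (this is where one genuinely needs case~(ii) of the eligibility definition, locating the auxiliary vertex $z$), and only then does claw-$o$-heaviness collapse to claw-freeness. Your final paragraph gestures at this difficulty but does not resolve it, and the diamond case has the same issue.
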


Clearly, (2) implies that the $c$-closure $\clc(G)$ of a claw-$o$-heavy graph $G$ is claw-free. In the next section, we will present some other useful structural results regarding claw-$o$-heavy graphs.

\subsection{Some useful lemmas}

Let $G$ be a claw-$o$-heavy graph, and let $C$ be a maximal clique of
$\clc(G)$. Following~\cite{LiNing}, we call $G[C]$ a \emph{region}
of $G$. For a vertex $v$ of $G$, we call $v$ an \emph{interior
vertex} if it is contained in only one region, and a \emph{frontier
vertex} if it is contained in two distinct regions. Two vertices are
called \emph{associated} in $G$ if they are in a common region, and
\emph{dissociated} otherwise. Note that two vertices are associated in $G$
if and only if they are adjacent in $\clc(G)$.

The following lemma was proved in \cite{LiNing}.

\begin{lemma}[Li and Ning \cite{LiNing}]\label{LeRegion}
Let $G$ be a claw-$o$-heavy graph, and let $R$ be a region of $G$. Then: \\
(1) $R$ is nonseparable;\\
(2) if $v$ is a frontier vertex in $R$, then $v$ has an interior
neighbor in $R$, or $R$ is complete and contains no interior vertex;\\
(3) if $u$ is associated with two vertices $u',u''\in V(R)$, then $u\in V(R)$; and\\
(4) for any two vertices $u,v\in V(R)$, there is an induced path of $G$
from $u$ to $v$ such that every internal vertex of the path is an
interior vertex in $R$.
\end{lemma}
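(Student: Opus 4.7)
The plan is to leverage Theorem~\ref{ThCa}(2), which yields $\clc(G)=L(H)$ for some triangle-free graph $H$. Under this identification each vertex of $G$ corresponds to an edge of $H$, and each maximal clique $C$ of $\clc(G)$ corresponds to a vertex $h\in V(H)$ of degree at least $2$ whose incident edges form exactly $C$. A vertex $hw\in C$ is then interior in $R=G[C]$ precisely when $\deg_H(w)=1$, and frontier precisely when $\deg_H(w)\ge 2$ (in which case $hw$ also lies in the region corresponding to $w$). Part (3) falls out immediately from this picture: if $u\notin V(R)$ were associated with two vertices $u',u''\in V(R)$, then as edges of $H$ both $u',u''$ contain $h$, while $u$ meets each of $u',u''$ at a vertex other than $h$; these meeting points must be the (distinct) non-$h$ endpoints of $u',u''$, and together with $h$ they would form a triangle in $H$, contradicting triangle-freeness.

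For parts (1), (2), and (4) I would argue along a closure sequence $G=G_1,G_2,\ldots,G_t=\clc(G)$, exploiting that $\clc(G_i)=\clc(G)$ throughout and hence that $V(R)$ and the partition of $V(R)$ into interior and frontier vertices are invariants. The core part is (4); here I would run a minimal-counterexample argument. Assume (4) fails for some $u,v\in V(R)$, so there is no induced $G$-path from $u$ to $v$ with interior internal vertices in $R$, and consider the smallest index $i$ at which an edge $pq$ between the two sides of a hypothetical $u,v$-separation in $G[C]$ first appears in $G_i[C]$. By construction $pq$ was inserted by local $c$-completion at some vertex $x_i$, so $p,q\in N_{G_i}(x_i)$; since $x_i$ is then associated with both $p$ and $q$, part (3) forces $x_i\in V(R)$. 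The edges $x_ip$ and $x_iq$ have strictly lower rank than $pq$, so by minimality of $i$ the vertex $x_i$ must lie on the "central" side of the separation, and the two alternatives of $c$-eligibility at $x_i$ yield a concrete detour in $G_1=G$ witnessing the desired induced path. Part (1) then follows by applying the same kind of extremal argument to a putative cut vertex of $R$, and (2) follows by specializing (4) to a frontier vertex paired with a suitable second endpoint (with the degenerate case of $R$ complete and interior-vertex-free handled separately).

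The principal obstacle is the second $c$-eligibility alternative, in which $G^{BC}_{x_i}[N(x_i)]$ is the disjoint union of two cliques $C_1,C_2$ bridged only through an $o$-heavy mate $z$ of $x_i$ with $zy_1,zy_2\in E(G_i)$ for some $y_1\in C_1$, $y_2\in C_2$. Here a detour between vertices lying in $C_1$ and $C_2$ may have to invoke both $x_i$ and $z$, and one must verify that the resulting walk is an induced $G$-path whose internal vertices are interior in $R$. This needs a careful combination of part (3) (to force the intermediate vertices into $V(R)$), the minimality of the rank $i$ (to rule out shortcut chords between intermediate vertices), and triangle-freeness of $H$ (to force the correct adjacencies around $h$ and to exclude frontier intermediate vertices); the bookkeeping across the three parts (1), (2), (4) in this alternative is where essentially all of the work will concentrate.
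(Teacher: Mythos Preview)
The paper does not prove this lemma; it is quoted verbatim from Li and Ning~\cite{LiNing} and used as a black box. There is therefore no proof in the present paper to compare your attempt against.

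As for the merits of your sketch on its own: the line-graph argument for (3) is exactly right and is the standard way to see it. Your closure-sequence strategy for (1), (2), (4) is a reasonable line of attack, and you are correct that $\clc(G_i)=\clc(G)$ keeps the set $C$ and its interior/frontier partition fixed throughout. What is underspecified is the object you are doing induction on in (4): ``the two sides of a hypothetical $u,v$-separation in $G[C]$'' only makes sense once you have fixed a concrete bipartition (e.g., the component of $u$ in the subgraph of $G[C]$ induced by interior vertices together with $\{u,v\}$, versus the rest), and you then need to argue that the first new edge $pq$ crossing it has both endpoints inside $C$ and that the resulting detour through $x_i$ (and possibly $z$) stays inside $C$, avoids frontier internal vertices, and is \emph{induced in $G$} rather than merely in $G_i$. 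The second $c$-eligibility alternative is, as you say, the crux: you will need part (3) to force $z\in V(R)$ as well, and some care to show $z$ is interior; none of this is in your outline yet. I would suggest consulting the Li--Ning paper directly for the intended argument before investing further in this route.
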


We next prove the following useful result. Recall that $D$ is used to denote the diamond (see Figure~\ref{fig1}).

\begin{lemma}\label{LeSupergraph}
Let $G$ be a claw-$o$-heavy graph, and let $x$ be a c-eligible vertex of
$G$. If $G'$ is a $D$-free spanning supergraph of $G$ with no
$o$-heavy pair, then $G^o_x\subseteq G'$.
\end{lemma}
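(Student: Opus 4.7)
The plan is to show that every edge of $G_x^o$ not already in $G$ lies in $G'$. Such edges are exactly the non-edges $uv$ of $G$ with $u,v\in N_G(x)$, so since $G\subseteq G'$ the task reduces to showing each such pair is joined in $G'$. I would split the argument according to whether $\{u,v\}$ has large degree sum or not.

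First, suppose $u,v\in N_G(x)$ with $uv\notin E(G)$ and $d_G(u)+d_G(v)\geq n$. Because $G\subseteq G'$ never decreases degrees, the pair $\{u,v\}$ would be an $o$-heavy pair of $G'$ if it stayed nonadjacent there, contradicting the assumption on $G'$. Hence $uv\in E(G')$. In particular, $G_x^{BC}[N(x)]\subseteq G'[N(x)]$. Next set $H:=G'[N_G(x)]$ and invoke $D$-freeness: if $u_1u_2,u_2u_3\in E(H)$ but $u_1u_3\notin E(H)$, the set $\{x,u_1,u_2,u_3\}$ would induce a diamond in $G'$. Thus $H$ is $P_3$-free, so $H$ is a disjoint union of cliques.

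The $c$-eligibility of $x$ is now exactly what I need to force $H$ to be a single clique, which completes the proof. In case (i) of the definition, $G_x^{BC}[N(x)]$ is a connected spanning subgraph of $H$, so $H$ is one clique. In case (ii), $G_x^{BC}[N(x)]=C_1\sqcup C_2$; if $H$ were not a single clique, its cliques would be exactly $C_1$ and $C_2$. Take the witnessing vertex $z$ from the definition, with $y_i\in C_i$ and $zy_1,zy_2\in E(G)$, and with $\{x,z\}$ an $o$-heavy pair of $G$. Note $z\notin N_G(x)$ since an $o$-heavy pair is by definition nonadjacent, and by the same $o$-heavy-pair argument as before $xz\in E(G')$. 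The four distinct vertices $x,z,y_1,y_2$ then have edges $xy_1,xy_2,zy_1,zy_2,xz$ in $G'$ and $y_1y_2\notin E(G')$ (since $y_1,y_2$ lie in distinct cliques of $H$), so they induce a diamond in $G'$, contradicting $D$-freeness.

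The main obstacle is case (ii): it requires combining the $o$-heaviness of $\{x,z\}$ (to force $xz$ into $G'$) with the $D$-freeness of $G'$ (to forbid the configuration on $\{x,z,y_1,y_2\}$). This is precisely the coupling that the definition of $c$-eligibility was crafted to exploit, and it is the only place where condition (ii) of that definition is actually used.
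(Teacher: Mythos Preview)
Your proof is correct and follows the same overall strategy as the paper's: first force $G_x^{BC}\subseteq G'$ via the no-$o$-heavy-pair hypothesis, then use $D$-freeness in each of the two cases of $c$-eligibility. Your packaging is a bit cleaner, though. By observing once and for all that $H=G'[N_G(x)]$ is $P_3$-free (hence a disjoint union of cliques), you replace the paper's path-walking argument in case~(i) by the one-line remark that a connected spanning subgraph forces $H$ to be a single clique. In case~(ii) you exhibit a diamond on $\{x,z,y_1,y_2\}$ directly, whereas the paper first proves $uz,vz\in E(G')$ via two auxiliary diamond arguments on $\{x,u,y_1,z\}$ and $\{x,v,y_2,z\}$ before reaching the final diamond on $\{x,u,z,v\}$. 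The underlying ideas coincide, but your structural observation about $H$ shortens the argument from three diamond configurations to one.
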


\begin{proof}
Clearly $G\subseteq G'$. Now let $uv$ be an arbitrary edge in
$E(G^o_x)\setminus E(G)$. We will prove that $uv\in E(G')$. Note
that $u,v\in N(x)$. Let $n=|V(G)|$.

If $uv\in E(G^{BC}_x)$,  then $d(u)+d(v)\geq n$, and
$d_{G'}(u)+d_{G'}(v)\geq n$. If $uv\notin E(G')$, then $\{u,v\}$ is
an $o$-heavy pair of $G'$, a contradiction. So we have that $uv\in
E(G')$. This implies that
$G^{BC}_x\subseteq G'$.
Now we assume that
$uv\not\in E(G^{BC}_x)$.
This implies that $uv \in E(G^o_x)\setminus E(G^{BC}_x)$.

Suppose first that
$G^{BC}_x[N(x)]$ is connected.
Let $P=uv_1v_2\cdots
v_k$ be a path in
$G^{BC}_x[N(x)]$
from $u$ to $v=v_k$. Note that
$uv_1\in
E(G^{BC}_x)
\subseteq E(G')$. If $uv\notin E(G')$, then $k\geq
2$. Let $v_i$ be the first vertex on $P-u$ such that $uv_i\notin
E(G')$. Then $uv_{i-1},v_{i-1}v_i\in E(G')$, and the subgraph of
$G'$ induced by $\{x,u,v_{i-1},v_i\}$ is a diamond, a contradiction.
Thus we conclude that $uv\in E(G')$.

For the final case,
we suppose that
$G^{BC}_x[N(x)]$
 is the disjoint union of two
cliques $C_1$ and $C_2$, and $x$ is contained in an $o$-heavy pair
$\{x,z\}$ of $G$ such that $zy_1,zy_2\in E(G)$ for some $y_1\in C_1$
and $y_2\in C_2$. Note that $xz\in E(G')$; otherwise $\{x,z\}$ is an
$o$-heavy pair of $G'$. By our assumption that $uv\notin
E(G^{BC}_x)$,
we
have (up to symmetry) $u\in C_1$ and $v\in C_2$. We claim that
$uz\in E(G')$. Suppose not. Then $u\neq y_1$, and the subgraph of
$G'$ induced by $\{x,u,y_1,z\}$ is a diamond, a contradiction. Thus,
as we claimed $uz\in E(G')$, and similarly $vz\in E(G')$. Then the
subgraph of $G'$ induced by $\{x,u,z,v\}$ is a diamond, also a
contradiction.
\end{proof}

It is not difficult to check that the $c$-closure of a claw-$o$-heavy graph
is $\{K_{1,3},D\}$-free and has no $o$-heavy pairs. By using Lemma~\ref{LeSupergraph} repeatedly, we
obtain the following result.

\begin{lemma}\label{LeClosure}
Let $G$ be a claw-$o$-heavy graph. Then $\clc(G)$ is the minimum
$\{K_{1,3},D\}$-free spanning supergraph of $G$ with no $o$-heavy
pair.
\end{lemma}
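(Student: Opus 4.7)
The plan is to prove two things: (I) $\clc(G)$ itself is a $\{K_{1,3},D\}$-free spanning supergraph of $G$ with no $o$-heavy pair, and (II) it is contained in every such supergraph. Spanningness is immediate from the closure construction. Claw-freeness and diamond-freeness follow directly from Theorem~\ref{ThCa}(2), which writes $\clc(G)=L(H)$ for some $C_3$-free graph $H$: every neighbor of an edge $e\in V(L(H))$ shares one of the two endpoints of $e$, so any three such neighbors must contain two sharing the same endpoint and hence adjacent in $L(H)$ (ruling out claws); and any induced diamond in $L(H)$ would yield two triangles sharing an edge, each of which (since $H$ is $C_3$-free) must arise from three edges of $H$ meeting at a common vertex, quickly forcing the two ``apexes'' in $H$ to coincide as endpoints of a single edge -- a contradiction.

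The heart of (I) is to show $\clc(G)$ has no $o$-heavy pair. Suppose $\{u,v\}$ is such a pair with $uv\notin E(\clc(G))$. Since $d(u)+d(v)\geq n$ and $uv$ is a non-edge, one gets $|N(u)\cap N(v)|\geq d(u)+d(v)-(n-2)\geq 2$, so $u,v$ share a common neighbor $x$. By claw-freeness and diamond-freeness, $N(x)$ decomposes into at most two cliques: an induced $P_3$ in $N(x)$, together with $x$, would give a diamond, and three pairwise non-adjacent vertices in $N(x)$, together with $x$, would give a claw. Since $u,v$ are non-adjacent, $N(x)$ splits as a disjoint union of exactly two cliques $C_1\ni u$ and $C_2\ni v$. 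The $o$-heaviness of $\{u,v\}$ puts $uv$ into $E((\clc(G))^{BC}_x)$, so $(\clc(G))^{BC}_x[N(x)]$ is connected through this cross edge; if any cross pair across $C_1,C_2$ fails to be $o$-heavy, then $N(x)$ is not a clique in $(\clc(G))^{BC}_x$, and condition~(i) in the definition of $c$-eligibility forces $x$ to be $c$-eligible, contradicting that $\clc(G)$ admits no $c$-eligible vertex.

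The main obstacle is the residual case where every cross pair across $C_1,C_2$ is $o$-heavy -- then $N(x)$ is already a clique in $(\clc(G))^{BC}_x$ and the primary condition for $c$-eligibility fails at $x$. I would handle this by picking a second common neighbor $y\in N(u)\cap N(v)\setminus\{x\}$ (guaranteed by the pigeonhole bound above), repeating the analysis at $y$, and -- after observing via the line-graph representation that $x,y$ are necessarily non-adjacent and that the local structure around the four vertices of $H$ underlying $u,v,x,y$ is rigidly constrained -- translating the cross-pair conditions at both $x$ and $y$ into degree sums via $d_{L(H)}(pq)=d_H(p)+d_H(q)-2$, then using $C_3$-freeness of $H$ together with the fact $|E(H)|=n$ to force a $c$-eligible vertex somewhere in $\clc(G)$ and obtain the desired contradiction. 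Once (I) is established, (II) is an easy induction along the closure sequence $G=G_1,\ldots,G_t=\clc(G)$: any $\{K_{1,3},D\}$-free spanning supergraph $G'$ of $G$ with no $o$-heavy pair is in particular $D$-free with no $o$-heavy pair, so Lemma~\ref{LeSupergraph} applies at each step to give $G_{i+1}=(G_i)^o_{x_i}\subseteq G'$, whence $\clc(G)\subseteq G'$.
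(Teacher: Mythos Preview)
Your minimality argument (II) via iterating Lemma~\ref{LeSupergraph} along the closure sequence $G=G_1,\ldots,G_t=\clc(G)$ is exactly the paper's approach, and your derivation of $\{K_{1,3},D\}$-freeness from Theorem~\ref{ThCa}(2) is correct and more explicit than the paper's ``not difficult to check.''

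The gap is in showing that $\clc(G)$ has no $o$-heavy pair. You correctly isolate the obstacle: when every cross pair across $C_1,C_2$ is $o$-heavy, $N(x)$ is a clique in $(\clc(G))^{BC}_x$, so under the paper's literal eligibility precondition $x$ is not $c$-eligible. But your proposed resolution via a second common neighbor $y$ and degree-sum bookkeeping in the preimage $H$ is too vague, and in fact cannot work as written: take $G=C_4$ or $G=L(K_{2,3})$ (the triangular prism). In both, every vertex $x$ has all non-adjacent pairs in $N(x)$ of degree sum exactly $n$, so $N(x)$ is a clique in $G^{BC}_x$ and no vertex is $c$-eligible---yet every non-adjacent pair of $G$ is $o$-heavy. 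So no amount of juggling common neighbors will produce a $c$-eligible vertex; the obstacle is genuine, not an artifact of your setup. The clean fix is definitional rather than analytic: if the eligibility precondition is read as ``$N_G(x)$ is not a clique in $G$'' (so that local completion at $x$ actually adds an edge) rather than ``not a clique in $G^{BC}_x$,'' the obstacle evaporates immediately---$uv\notin E(\clc(G))$ already witnesses that $N(x)$ is non-complete, and $uv\in E^{BC}_x$ connects $C_1$ to $C_2$, so condition~(i) makes $x$ $c$-eligible, contradicting that $\clc(G)$ has no $c$-eligible vertex. That one-line argument is presumably what the paper intends by ``not difficult to check''; your second-neighbor detour is then unnecessary.
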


From the above Lemma \ref{LeClosure}, we can infer that any two distinct heavy vertices are adjacent in $\clc(G)$. Hence, the set of all heavy vertices of $\clc(G)$ is a clique.

A claw-$o$-heavy graph $G$ for which $G=\clc(G)$ will be called \emph{$c$-closed}.

We next prove a result on the clique structure of $c$-closed graphs.

\begin{lemma}\label{LEHeavy} Let $G$ be a $c$-closed, claw-free graph, and let $K$ be a maximal clique containing all heavy vertices. If $K$ does not contain all $a$-heavy pairs of $G$, then there exists another maximal clique $K'$ such that $K$ intersects $K'$ at {exactly one}
 heavy vertex of $G$ and $K\cup K'$ contains all $a$-heavy pairs.
\end{lemma}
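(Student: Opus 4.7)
The plan is to exploit Theorem~\ref{ThCa}(2): since $G$ is $c$-closed claw-$o$-heavy, $G=L(H)$ for some triangle-free graph $H$. Vertices of $G$ correspond to edges of $H$, two vertices are adjacent iff the corresponding edges share an endpoint, and, because $H$ is triangle-free, every maximal clique of $G$ of size at least $2$ is the star of $H$ at a vertex of degree at least $2$. In this language $K$ is the star at some vertex $w\in V(H)$, and so every heavy vertex of $G$ is an edge of $H$ incident to $w$.

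By hypothesis there is an $a$-heavy pair $\{u,v\}$ with, say, $v\notin K$. Any $a$-heavy pair must contain a heavy vertex and every heavy vertex lies in $K$, so $u\in K$ is heavy and $v$ is not. Writing $u=ww_1$ and $v=w_1w_2$ in $H$, I would set $K'$ to be the star of $H$ at $w_1$. Triangle-freeness makes $K'$ a maximal clique of $G$, distinct from $K$, and the only edge of $H$ belonging to both stars is $ww_1$, so $K\cap K'=\{u\}$, a single heavy vertex.

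The main task is then to show that $K\cup K'$ contains every $a$-heavy pair $\{a,b\}$. Such a pair has a heavy endpoint $a\in K$, say $a=ww'$; if $b\notin K$ then $b=w'y$ with $y\neq w$, and the goal is to force $w_1\in\{w',y\}$. Assume the contrary and split into three subcases. Case (i), $w'=w_2$: the edges $u=ww_1$, $v=w_1w_2$, $a=ww_2$ form a triangle in $H$, contradicting triangle-freeness. Case (ii), $w'\neq w_2$ but $y=w_2$: here $u=ww_1$ and $b=w'w_2$ share no endpoint in $H$, hence $ub\notin E(G)$; combining the two $a$-heaviness inequalities with the identity $d_G(pq)=\deg_H(p)+\deg_H(q)-2$ yields directly $d(u)+d(b)\geq n$. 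Case (iii), $w'\neq w_2$ and $y\neq w_2$: both $\{u,b\}$ and $\{a,v\}$ are nonadjacent in $G$; summing the two $a$-heaviness inequalities and applying pigeonhole shows that at least one of these nonadjacent pairs has degree sum at least $n$. In (ii) and (iii) this produces an $o$-heavy pair in $G$, contradicting Lemma~\ref{LeClosure}, which says that a $c$-closed graph has no $o$-heavy pair.

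The principal obstacle is case (ii). It is the single subcase where one cannot appeal to pigeonhole and where the precise algebraic cancellation between the two $a$-heaviness inequalities is what forces the pair $\{u,b\}$ to be $o$-heavy; it is also what pins down the specific choice of $K'$ as the star at $w_1$. The other two subcases collapse quickly from triangle-freeness and a one-line pigeonhole argument.
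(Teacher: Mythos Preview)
Your proof is correct, and it takes a genuinely different route from the paper's.

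The paper does not pass to the line-graph model at all. Instead it picks $K'$ \emph{extremally}: among all maximal cliques other than $K$, it lets $K'$ be one containing as many $a$-heavy pairs as possible, and lets $\{u,v\}\subseteq K'$ be one such pair with $u$ heavy. For a hypothetical $a$-heavy pair $\{x,y\}\not\subseteq K\cup K'$ (with $x$ heavy, so $x\in K$ and $x\neq u$ since a vertex of a $c$-closed graph lies in at most two maximal cliques), it adds the two inequalities $d(u)+d(v)\ge n$ and $d(x)+d(y)\ge n$ and pigeonholes to get $d(v)+d(x)\ge n$ or $d(u)+d(y)\ge n$; since $G$ has no $o$-heavy pair this forces $vx\in E(G)$ or $uy\in E(G)$, and either adjacency clashes with the maximal-clique structure.

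Your approach instead fixes $K'$ \emph{constructively} as the star at $w_1$ in the preimage graph $H$, and replaces the single pigeonhole step by a short case split in $H$. The payoff is that your choice of $K'$ is explicit rather than extremal, and your case~(ii) is handled by an exact identity (the line-graph degree formula makes $d(u)+d(b)$ literally equal to the average of the two $a$-heavy sums), which is a bit sharper than what raw pigeonhole gives. The paper's argument, on the other hand, is shorter and never leaves the clique/degree language. Both are valid; yours is more hands-on, theirs more abstract.
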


\begin{proof}
Let $K'$ be a maximal clique other than $K$ containing as many $a$-heavy pairs of $G$ as possible. Since $K$ does not contain all $a$-heavy pairs of $G$, $K'$ contains at least {one}
 $a$-heavy pair of $G$. Let $\{u,v\}$ be an $a$-heavy pair of $G$ contained in $K'$. We may assume that $u$ is heavy in $G$. Then $u\in V(K)$ by Lemma~\ref{LeClosure}. It follows from claw-freeness of $G$ that $\{u\}= K\cap K'$.

Assume that there exists an $a$-heavy pair $\{x,y\}$ of $G$ such that $\{x,y\}\not\subset K\cup K'$. Let $K''$ be a maximal clique containing $\{x,y\}$. Then $K''\neq K$ and $K''\neq K'$. We may assume that $x$ is heavy in $G$. Then $K''$ intersects $K$ at $x$, $y\notin K'$, and $y\notin K$. It follows from the claw-freeness of $G$ that $x\neq u$. Then $d(u)+d(v)+d(x)+d(y)\geq 2|V(G)|$. It follows that $d(v)+d(x)\geq |V(G)|$ or $d(u)+d(y)\geq |V(G)|$. Then $vx\in E(G)$ or $uy\in E(G)$ by Lemma~\ref{LeComplation}, contradicting the choice of $K''$.
Thus $K\cup K'$ contains all $a$-heavy pairs.
\end{proof}

We close this section with an easy observation concerning the existence of (possibly degenerated)
generalized claws and nets. We start with the definitions. Let $x$ be a vertex, let $z_1,z_2,z_3$ be three distinct vertices (with possibly one of them being equal to $x$), and let $Q_1$, $Q_2$ and $Q_3$ be three paths with origin $x$ and termini $z_1$, $z_2$ and $z_3$, respectively, such that they only have the vertex $x$ in common. We call the union of $Q_1$, $Q_2$ and $Q_3$ a \emph{generalized claw connecting $z_1,z_2,z_3$}. If all the three paths $Q_1$, $Q_2$, $Q_3$ are non-trivial, then this generalized claw is called \emph{non-degenerate}. Similarly, let $T=x_1x_2x_3x_1$ be a triangle, let $z_1,z_2,z_3$ be three distinct vertices (with possibly $x_i=z_i$), and let
$Q_i$,
$1\leq i\leq 3$, be three disjoint paths with origin $x_i$ and terminus $z_i$. We call the union of $T$, $Q_1$, $Q_2$ and $Q_3$ a \emph{generalized net connecting $z_1,z_2,z_3$}. If all the three paths $Q_1$, $Q_2$, $Q_3$ are non-trivial, then this generalized net is called \emph{non-degenerate}. Note that a non-degenerate generalized claw (net) contains a claw (net).

\begin{lemma}\label{LeGeneralized}
Let $G$ be a connected graph, and let $z_1,z_2,z_3$ be three distinct vertices of $G$. Then $G$ has an induced generalized claw or net connecting $z_1,z_2,z_3$.
\end{lemma}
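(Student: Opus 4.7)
The plan is to build the desired induced subgraph explicitly from an induced $z_1$--$z_2$ path plus a short connection to $z_3$. Since $G$ is connected, fix an induced path $P=v_0v_1\cdots v_\ell$ with $v_0=z_1$ and $v_\ell=z_2$. If $z_3\in V(P)$, then $P$ itself is already an induced (degenerate) generalized claw with centre $x=z_3$: two non-trivial paths along $P$ to $z_1$ and $z_2$, and a trivial path to $z_3$. So I may assume $z_3\notin V(P)$.

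Set $N_3:=N(z_3)\cap V(P)$. If $N_3=\emptyset$, I would choose a shortest $z_3$-to-$V(P)$ path $Q$ and let $w$ be its endpoint on $P$. Minimality of $Q$ guarantees that $Q$ is induced and that its internal vertices have no neighbours on $P$; combined with $N_3=\emptyset$, this forces $G[V(P)\cup V(Q)]=P\cup Q$, which is an induced generalized claw centred at $w$ (degenerating to a path when $w\in\{z_1,z_2\}$). If $|N_3|=1$, say $N_3=\{w\}$, exactly the same argument with $Q$ replaced by the single edge $z_3w$ yields an induced generalized claw at $w$.

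It remains to handle $|N_3|\ge 2$. Set $i:=\min\{m:v_m\in N_3\}$ and $j:=\max\{m:v_m\in N_3\}$, so $0\le i<j\le\ell$. When $j\ge i+2$, I would take $H:=G[\{v_0,\ldots,v_i,v_j,\ldots,v_\ell,z_3\}]$; since $P$ is induced there are no chords between the two halves (in particular $v_iv_j\notin E(G)$), and the intermediate neighbours of $z_3$ in $N_3$ are deliberately excluded from the vertex set, so $H$ consists precisely of the two subpaths of $P$ together with the edges $z_3v_i$ and $z_3v_j$: an induced generalized claw centred at $z_3$. When $j=i+1$, the extremality of $i,j$ forces $N_3=\{v_i,v_{i+1}\}$, so $\{v_i,v_{i+1},z_3\}$ induces a triangle, and then $G[V(P)\cup\{z_3\}]$ consists exactly of this triangle together with the subpaths $v_0\cdots v_i$ and $v_{i+1}\cdots v_\ell$---an induced generalized net with apex triangle $v_iv_{i+1}z_3$ terminating at $z_1,z_2,z_3$.

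The step requiring the most care is the last one: one has to recognize that the dichotomy between the claw and the net conclusion is dictated precisely by whether the extremal neighbours $v_i,v_j$ of $z_3$ on $P$ are consecutive on $P$. When they are not, the chord $v_iv_j$ is absent by inducedness of $P$, and trimming the middle portion of $P$ (together with any $v_m\in N_3$ for $i<m<j$) delivers a clean induced claw at $z_3$. When they are consecutive, the induced triangle that $v_i,v_{i+1},z_3$ form obstructs every candidate claw on these vertices, but it is exactly the apex triangle required by the definition of a generalized net. Once this case split is made, inducedness of the constructed substructure is essentially forced by the minimality choices taken for $P$ and~$Q$.
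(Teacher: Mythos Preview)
There is a genuine gap in the case $N_3=\emptyset$. When $Q=z_3q_1\cdots q_{k-1}w$ is a shortest $z_3$--$V(P)$ path (necessarily of length $k\ge 2$ here), minimality of $Q$ forces $q_1,\ldots,q_{k-2}$ to have no neighbours on $P$, but it says nothing about the penultimate vertex $q_{k-1}$: if $q_{k-1}v_m\in E(G)$ for some $v_m\ne w$, then the path $z_3q_1\cdots q_{k-1}v_m$ still has length $k$, so minimality is not violated. Hence $G[V(P)\cup V(Q)]$ may contain edges from $q_{k-1}$ to several vertices of $P$, and your claimed induced generalized claw centred at $w$ need not be induced.

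The repair is to run your own $|N_3|\ge 1$ analysis on $q_{k-1}$ in place of $z_3$: let $i,j$ be the extremal indices with $v_i,v_j\in N(q_{k-1})$; if $j\ge i+2$, delete $\{v_{i+1},\ldots,v_{j-1}\}$ and obtain an induced generalized claw with centre $q_{k-1}$ and three arms terminating at $z_1,z_2,z_3$ (the last arm being the tail $q_{k-1}q_{k-2}\cdots z_3$, whose vertices have no neighbours on $P$); if $j=i+1$, the triangle $v_iv_{i+1}q_{k-1}$ yields an induced generalized net with that same tail attached at $q_{k-1}$. This is exactly what the paper does---its vertex $x_3$ is your $q_{k-1}$. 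The paper additionally takes $P$ to be a \emph{shortest} $z_1$--$z_2$ path rather than merely induced, which lets it bound the gap between the extremal neighbours of $x_3$ by $2$ and so avoid your trimming manoeuvre; your trimming trick is a legitimate alternative that works for any induced $P$, but it must still be deployed at $q_{k-1}$ when $N_3=\emptyset$.
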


\begin{proof}
Let $P$ be a shortest path between $z_1$ and $z_2$. If $P$ passes through $z_3$, then $P$ is a generalized claw connecting $z_1,z_2,z_3$. Now we assume that $z_3\notin V(P)$. Let $P'$ be a shortest path from $z_3$ to $P$. Let $x$ be the end-vertex of $P'$ on $P$. Let $x_3$ be the neighbor of $x$ on $P'$. Note that every vertex in $V(P)$ and every vertex in $V(P')\backslash\{x,x_3\}$ are
nonadjacent.

If $x_3$ has only one neighbor $x$ in $P$, then $P\cup P'$ is an induced generalized claw connecting $z_1,z_2,z_3$.

Now we assume that $d_P(x_3)\geq 2$. Let $x_1$ and $x_2$ be the first and last neighbor of $x_3$ in $P$ (along the order of $P$ from $z_1$ to $z_2$). We claim that $d_P(x_1,x_2)\leq 2$. Otherwise $P[z_1,x_1]x_1x_3x_2P[x_2,z_2]$ is a path between $z_1$ and $z_2$ which is shorter than $P$, a contradiction.

If $d_P(x_1,x_2)=1$, i.e., $x_1x_2\in E(P)$, then $G[V(P)\cup V(P')]$ is an induced generalized net connecting $z_1,z_2,z_3$. If $d_P(x_1,x_2)=2$, then let $x'$ be the common neighbor of $x_1$ and $x_2$ in $P$. Then $G[(V(P)\cup V(P'))\backslash\{x'\}]$ is an induced generalized claw connecting $z_1,z_2,z_3$.
\end{proof}

\section{Heavy subgraph pairs}\label{heavy}

In this section, we introduce the concepts of $p$-heavy and $q$-heavy nets. We will show that every induced net of the $c$-closure of a $\{K_{1,3}, N\}$-$o$-heavy graph is $p$-heavy, and that every induced net of the $c$-closure of a $\{K_{1,3}, W\}$-$o$-heavy graph is $p$-heavy or $q$-heavy.

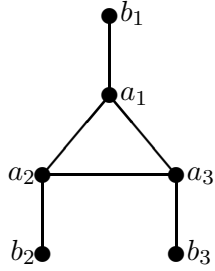
\begin{figure}[h]
  \centering
\begin{picture}(80,110)
\thicklines

\multiput(20,10)(50,0){2}{\put(0,0){\line(0,1){30}}
\multiput(0,0)(0,30){2}{\circle*{6}}}
\multiput(45,70)(0,30){2}{\put(0,0){\circle*{6}}}
\put(45,70){\line(0,1){30}} \put(20,40){\line(1,0){50}}
\put(20,40){\line(5,6){25}} \put(70,40){\line(-5,6){25}}

\put(49,68){$a_1$} \put(49,98){$b_1$} \put(74,38){$a_3$}
\put(74,8){$b_3$} \put(7,38){$a_2$} \put(8,8){$b_2$}

\end{picture}\caption{\small The graph $N$ with labels.}\label{N}
\end{figure}

In the next definitions, we refer to the vertices of an induced net $N$ according to the labels in Figure~\ref{N}. Recall that $N$ is called $o$-heavy if $N$ contains an $o$-heavy pair. If the triangle of $N$ contains an $a$-heavy pair, then $N$ is called \emph{$p$-heavy}.

We say that $N$ is \emph{q-heavy} if there exists an $i\in\{1,2,3\}$ such that\\
(i) $a_i,b_i$ are heavy in $G$;\\
(ii) $d(a_j)=3,d(b_j)=2$ for $j\neq i$; and\\
(iii) the neighbor of $b_j$ other than $a_j$ is heavy for $j\neq i$.

Recall that $G$ is $N$-$o$-heavy if every induced net of $G$ is $o$-heavy. We say that $G$ is \emph{N-$p$-heavy} if every induced net of $G$ is $p$-heavy; we also say that $G$ is \emph{$N$-$op$-heavy} (\emph{N-$pq$-heavy}) if every induced net of $G$ is either $o$-heavy or $p$-heavy (either $p$-heavy or $q$-heavy).

As we pointed out before, the $c$-closure of a $\{K_{1,3},N\}$-$o$-heavy graph is not necessarily $N$-free. However, we can prove the following weaker result.

\begin{theorem}\label{ThCN}
Let $G$ be a $\{K_{1,3},N\}$-$o$-heavy graph. Then $\clc(G)$ is claw-free and $N$-$p$-heavy.
\end{theorem}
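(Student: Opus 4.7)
Claw-freeness of $\clc(G)$ follows at once from Theorem \ref{ThCa}(2), which asserts that $\clc(G)$ is a line graph of a $C_3$-free graph and hence claw-free.

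For the $N$-$p$-heavy part, my plan is to argue by contradiction. Fix an induced net $N^*$ in $H := \clc(G)$, labeled as in Figure \ref{N} with triangle $\{a_1,a_2,a_3\}$ and pendants $b_1,b_2,b_3$, and suppose no pair $\{a_i,a_j\}$ is $a$-heavy in $H$. The idea is to build an induced claw or induced net in $G$ containing a non-adjacent pair of the form $\{a_i,a_j\}$ for some $i\neq j$: the $\{K_{1,3},N\}$-$o$-heavy hypothesis on $G$ then forces some non-adjacent pair in that substructure to be $o$-heavy, and the resulting degree inequality transfers to $H$; since $a_ia_j\in E(H)$ (it is a triangle edge of $N^*$), this will give the forbidden $a$-heavy pair.

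I would begin by setting up the region structure. By Lemma \ref{LeRegion}(3), the pairwise-associated triple $a_1,a_2,a_3$ lies together in a single region $R$ of $H$, and for each $i$ the vertex $b_i$ lies outside $R$ in a second region $R_i$ containing $a_i$; the $R_i$ are pairwise distinct since $b_ib_j\notin E(H)$. Using Lemma \ref{LeRegion}(4), I fix for each $i$ an induced $(a_i,b_i)$-path in $G$ whose internal vertices are interior in $R_i$, and let $b_i^*$ be the $G$-neighbor of $a_i$ on this path (so $b_i^*$ is either $b_i$, when $a_ib_i\in E(G)$, or an interior vertex of $R_i$). A routine check using the region properties shows the six vertices $\{a_1,a_2,a_3,b_1^*,b_2^*,b_3^*\}$ are distinct with the same non-adjacency pattern in $H$ as $N^*$, and each $b_i^*$ is non-adjacent in $H$ to every vertex of $V(R)\setminus\{a_i\}$ and to $b_j^*$ for $j\neq i$.

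The heart of the plan is the following case analysis. I would apply Lemma \ref{LeGeneralized} inside the nonseparable (by Lemma \ref{LeRegion}(1)) graph $G[V(R)]$ to the three vertices $a_1,a_2,a_3$, obtaining an induced generalized claw or generalized net in $G[V(R)]$ connecting them, and then glue this interior structure to the exterior pendants $b_i^*$ to produce an induced $K_{1,3}$ or induced $N$ in $G$. Two representative configurations make the mechanism transparent. First, if the output is a non-degenerate generalized claw with center $x$ and three length-one arms, then $\{x,a_1,a_2,a_3\}$ induces a claw in $G$; claw-$o$-heaviness of $G$ forces some $\{a_i,a_j\}$ to be $o$-heavy in $G$, hence in $H$, giving the desired $a$-heavy pair. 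Second, if $a_1,a_2,a_3$ already induce a triangle in $G$, then the six vertices $\{a_1,a_2,a_3,b_1^*,b_2^*,b_3^*\}$ induce a net in $G$ whose $o$-heavy pair, supplied by $N$-$o$-heaviness of $G$, is non-adjacent in $H$ too, contradicting Lemma \ref{LeClosure}; so this configuration cannot occur under our assumption.

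The main obstacle is the intermediate configurations, in which $G[V(R)]$ yields an induced generalized claw or generalized net with at least one arm $Q_i$ of length $\geq 2$: the candidate induced substructure in $G$ then includes auxiliary interior vertices of $R$ on the longer arms, and the $o$-heavy pair forced by the hypothesis on $G$ may a priori involve such an auxiliary vertex instead of two of the $a_i$'s. I expect the technical core of the proof to consist of showing, by a careful choice of the generalized claw/net (for example shortening longer arms via further applications of Lemma \ref{LeGeneralized} on refined subgraphs of $R$, or by exploiting Lemma \ref{LEHeavy} together with the fact that the heavy vertices of $H$ form a clique inside $R$), that one can always reduce to the two clean cases above, so that any $o$-heavy pair ultimately produced is forced to lie within $\{a_1,a_2,a_3\}$; this completes the contradiction and establishes the $N$-$p$-heaviness of $\clc(G)$.
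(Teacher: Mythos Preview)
The paper's proof is essentially two lines: apply Lemma~\ref{LeopHeavy} (cited from Li--Ning) to conclude that $\clc(G)$ is $N$-$op$-heavy, and then observe via Lemma~\ref{LeClosure} that $\clc(G)$ has no $o$-heavy pairs, so every induced net must in fact be $p$-heavy. Your proposal, by contrast, attempts to reprove Lemma~\ref{LeopHeavy} from scratch. The region setup and the invocation of Lemma~\ref{LeGeneralized} are exactly right and mirror the proof of the analogous Lemma~\ref{LepqHeavy} in the paper.

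However, your handling of the ``main obstacle'' is where the gap lies. You propose to reduce the generalized claw/net in $G[V(R)]$ to one of two clean extremes by shortening arms or by invoking Lemma~\ref{LEHeavy}; neither is needed, and it is not clear either would terminate. The correct resolution is a one-line degree comparison that you overlook: every interior vertex $w$ of $R$ satisfies $d_{G'}(w)=|V(R)|-1\le d_{G'}(a_i)$ for each $i$, since each $a_i$ is a frontier vertex of $R$. Consequently, once the $\{K_{1,3},N\}$-$o$-heavy hypothesis produces a nonadjacent pair $u,v$ in your generalized structure $H'$ with $d_G(u)+d_G(v)\ge n$, there are only two possibilities. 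If $u,v\in V(H)\subseteq I_R\cup\{a_1,a_2,a_3\}$, replace any auxiliary interior vertex by some $a_j$ using the degree bound to obtain $d_{G'}(a_i)+d_{G'}(a_j)\ge n$ for some $i\neq j$, the desired $a$-heavy pair. If instead one of $u,v$ equals some $b_i^*$, then the other endpoint lies outside $R_i$ (since $a_ib_i^*\in E(G)$), so the pair is dissociated and hence nonadjacent in $G'$ with degree sum at least $n$, contradicting Lemma~\ref{LeClosure}. This disposes of all ``intermediate configurations'' at once, with no case-reduction required.
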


Moreover, we can prove the following more general result.

\begin{theorem}\label{ThCPCZN}
Let $G$ be a 2-connected $\{K_{1,3},S\}$-$o$-heavy graph, where $S\in\{P_4,P_5,C_3,Z_1,Z_2,B,N\}$. Then $\clc(G)$ is claw-free and $N$-$p$-heavy.
\end{theorem}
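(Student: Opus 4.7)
By Theorem~\ref{ThCa}(2), $\clc(G)$ is the line graph of a triangle-free graph, hence automatically claw-free; the substantive work is $N$-$p$-heaviness. The case $S=N$ is already Theorem~\ref{ThCN}, so I may assume $S\in\{P_4,P_5,C_3,Z_1,Z_2,B\}$. My plan is to reduce to Theorem~\ref{ThCN} whenever possible, by showing that every 2-connected $\{K_{1,3},S\}$-$o$-heavy graph is in fact $\{K_{1,3},N\}$-$o$-heavy; for the two cases where this indirect route is more delicate I will argue directly on $\clc(G)$ by contradiction and exploit the $c$-closure machinery of Section~\ref{pre}.

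For the straightforward reduction, let $N$ be an arbitrary induced net of $G$ with the labelling of Figure~\ref{N}; the triangle $a_1a_2a_3$ then lies in $G$. For $S=P_4$, $G[\{b_1,a_1,a_2,b_2\}]\cong P_4$, and its three nonadjacent pairs $\{b_1,a_2\}$, $\{a_1,b_2\}$, $\{b_1,b_2\}$ all lie in $V(N)$, so the $P_4$-$o$-heavy hypothesis supplies an $o$-heavy pair of $G$ inside $N$. The cases $S=Z_1$ and $S=B$ are identical, using $G[\{a_1,a_2,a_3,b_1\}]\cong Z_1$ and $G[\{a_1,a_2,a_3,b_1,b_2\}]\cong B$. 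For $S=C_3$ the hypothesis collapses to $\{K_{1,3},C_3\}$-free, forcing a 2-connected $G$ to be a cycle $C_n$; a direct check of the definition of $c$-eligibility shows that no vertex of $C_n$ is $c$-eligible, so $\clc(G)=C_n$, which contains no induced net, and the conclusion holds vacuously.

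The cases $S\in\{P_5,Z_2\}$ are subtler, because neither graph is an induced subgraph of a six-vertex net, and the reduction must invoke external vertices and 2-connectedness essentially. Since $\delta(G)\geq 2$, each pendant $b_i$ of $N$ has a neighbour $w_i\neq a_i$, and the inducedness of $N$ forces $w_i\notin V(N)$. The natural induced candidates in $G$ are $w_1b_1a_1a_2b_2\cong P_5$ and the induced $Z_2$ on the triangle $a_1a_2a_3$ together with the tail $a_1b_1w_1$, provided $w_1$ has no chord into $V(N)\setminus\{b_1\}$. The $S$-$o$-heavy hypothesis delivers an $o$-heavy pair, which either lies in $V(N)$ (as desired), or involves $w_1$; in the latter persistent-escape scenario I would promote it via degree monotonicity to an $a$-heavy pair $\{w_1,x\}$ of $\clc(G)$, iterate over the three symmetric indices, and combine the resulting $a$-heavy pairs using Lemma~\ref{LEHeavy} and the claw- and diamond-freeness of $\clc(G)$ to force either an induced diamond (for instance on $\{w_i,a_1,a_2,b_i\}$ or on $\{w_i,a_1,a_2,a_3\}$) in $\clc(G)$, or an $a$-heavy pair already within the triangle $\{a_1,a_2,a_3\}$, in either case obtaining a contradiction.

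The hardest part will be this final combinatorial push for $P_5$ and $Z_2$: systematically ruling out every persistent-escape configuration—in which the $o$-heavy pair keeps sliding off the net into some $w_i$—requires delicate case analysis of how $w_i$ may attach to $V(N)$, and is where I expect most of the work to concentrate. A secondary technical nuisance arises if the reduction needs to start from an induced net of $\clc(G)$ rather than of $G$, because $G$ itself may possess no induced net while $\clc(G)$ does: then missing triangle edges must be replaced by induced paths through the common region $R$ of $\{a_1,a_2,a_3\}$ supplied by Lemma~\ref{LeRegion}(4), and I will need Lemmas~\ref{LeRegion}(3) and \ref{LeSupergraph} to control the enlarged induced subgraphs so produced and ensure that no unwanted chords appear.
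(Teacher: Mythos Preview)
Your handling of $S\in\{P_4,Z_1,B,N\}$ matches the paper. For $S=C_3$ you slip: claw-$o$-heavy is not claw-free (consider $K_{m,m}$), so $G$ need not be a cycle; but the fix is immediate, since $C_3$-$o$-heavy means $C_3$-free, hence $N$-free, hence vacuously $N$-$o$-heavy, and Theorem~\ref{ThCN} applies as in the other easy cases.

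For $S\in\{P_5,Z_2\}$ you miss the paper's key manoeuvre. Both $P_5$ and $Z_2$ are induced subgraphs of the wounded $W$, so a $\{K_{1,3},S\}$-$o$-heavy graph is automatically $\{K_{1,3},W\}$-$o$-heavy, and Theorem~\ref{ThCW} yields at once that $G'=\clc(G)$ is $N$-$pq$-heavy. The task then collapses to ruling out a $q$-heavy-but-not-$p$-heavy net $M$ in $G'$. The definition of $q$-heavy is so rigid---forcing $d_{G'}(a_2)=d_{G'}(a_3)=3$, $d_{G'}(b_2)=d_{G'}(b_3)=2$, and placing four specified heavy vertices $a_1,b_1,c_2,c_3$ in a single region---that the local structure around $M$ is essentially pinned down; for $S=P_5$ the induced path $c_2b_2a_2a_3b_3$ in $G$ then has every nonadjacent pair dissociated, an instant contradiction, and the $Z_2$ case needs only two further short claims about triangles through $a_1$ and $c_2$. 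Your direct plan has no such leverage: without the $q$-heavy conclusion you have no degree control on $a_2,a_3,b_2,b_3$, no guarantee the triangle edges $a_ia_j$ lie in $G$ rather than only in $G'$, and nothing constraining how the external $w_i$ attach to the net. The ``persistent-escape'' casework you anticipate is precisely the work the proof of Theorem~\ref{ThCW} already performs via the region machinery of Lemmas~\ref{LeRegion} and~\ref{LeGeneralized}; reusing that theorem is far cleaner than redoing that analysis by hand.
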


The $c$-closure of a $\{K_{1,3},W\}$-$o$-heavy graph $G$ is not necessarily $N$-$p$-heavy. However, we can prove that for such a graph $G$ every net of $\clc(G)$ is either $p$-heavy or $q$-heavy.

\begin{theorem}\label{ThCW}
Let $G$ be a 2-connected $\{K_{1,3},W\}$-$o$-heavy graph. Then $\clc(G)$ is claw-free and $N$-$pq$-heavy.
\end{theorem}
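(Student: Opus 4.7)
The first claim, that $\clc(G)$ is claw-free, follows immediately from Theorem~\ref{ThCa}(2) since every line graph is claw-free. For the $N$-$pq$-heavy claim, I will fix an arbitrary induced net $N'$ of $\clc(G)$ with triangle $a_1a_2a_3$ and pendants $b_1,b_2,b_3$ labelled according to Figure~\ref{N}, set $n=|V(G)|$, and assume for contradiction that $N'$ is neither $p$-heavy nor $q$-heavy. By Lemma~\ref{LeClosure}, $\clc(G)$ is $\{K_{1,3},D\}$-free and admits no $o$-heavy pair; in particular, the non-edges of $N'$ remain non-edges in $G$, every non-adjacent pair inside $V(N')$ has degree sum less than $n$ in $\clc(G)$, and no vertex outside $\{a_1,a_2,a_3\}$ is adjacent to exactly two of $a_1,a_2,a_3$ (else a diamond with the triangle would appear).

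The plan is to produce an induced wounded graph $W$ in $G$, built from a modification of $N'$, and then to transfer its guaranteed $o$-heavy pair (coming from the $\{K_{1,3},W\}$-$o$-heaviness of $G$) back to $N'$ via the degree-monotonicity of the closure (Lemma~\ref{LeComplation}(1)). Concretely, for some choice of indices $\{i,j,k\}=\{1,2,3\}$ I intend to discard the pendant $b_k$ and to extend the pendant $b_j$ to a pendant $P_3$: that is, to find $c_j\in N_G(b_j)$ such that $\{a_1,a_2,a_3,b_i,b_j,c_j\}$ induces a copy of $W$ in $G$ (with $a_k$ playing the role of the triangle vertex without pendant, $a_i$ the triangle vertex with a pendant edge, and $a_j$ the triangle vertex carrying the pendant $P_3$). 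The existence of a suitable $c_j$ will be argued from the 2-connectivity of $G$ together with Lemma~\ref{LeGeneralized} applied to well-chosen triples of vertices in $\clc(G)$, while the required non-adjacencies of $c_j$ to the remaining five vertices of $W$ follow from the $\{K_{1,3},D\}$-free structure of $\clc(G)$. The degenerate case in which no such $c_j$ exists for any admissible $i,j,k$ will be handled separately: in that case every $b_\ell$ has tiny degree in $\clc(G)$, and applying ``no $o$-heavy pair'' to the non-adjacent pairs $\{a_\ell,b_m\}$ ($\ell\neq m$) forces the triangle degrees $d(a_\ell)$ to be large enough to produce an $a$-heavy pair in the triangle, i.e., $N'$ is $p$-heavy, a contradiction.

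Given the induced $W$ in $G$, let $\{u,v\}$ be its $o$-heavy pair. Since closure steps only increase degrees (Lemma~\ref{LeComplation}(1)), we have $d_{\clc(G)}(u)+d_{\clc(G)}(v)\geq n$; the absence of $o$-heavy pairs in $\clc(G)$ then forces $uv\in E(\clc(G))$, so $\{u,v\}$ is an $a$-heavy pair of $\clc(G)$. I next enumerate the adjacent pairs of $W$ and translate each to the structure of $N'$. If $\{u,v\}$ corresponds to a triangle edge of $N'$, then $N'$ is $p$-heavy, contradicting the assumption. Otherwise $\{u,v\}$ corresponds to a triangle-pendant edge of $N'$, say the edge $a_ib_i$; a short case analysis combining non-$p$-heaviness, claw-freeness, diamond-freeness and the absence of further $o$-heavy pairs in $\clc(G)$ will then pin the two remaining triangle vertices $a_\ell$ ($\ell\neq i$) to degree exactly $3$, their pendants $b_\ell$ to degree exactly $2$, and force the second neighbor of each such $b_\ell$ to be heavy. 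This is precisely the $q$-heavy pattern, again contradicting our assumption. Varying which pendant we extend and which we discard sweeps all three choices of the distinguished index $i$ in the $q$-heavy definition, closing the contradiction.

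The main obstacle I foresee is the construction of the induced $W$ in $G$: one must select the right pendant to extend and the right one to discard, and must rule out shortcut edges from $c_j$ to the vertices of the triangle and to $b_i$ using only the diamond/claw-free structure of $\clc(G)$ (not an ad-hoc appeal to $G$). A secondary delicate point is the sharp integer pinning $d(a_\ell)=3$, $d(b_\ell)=2$ in the $q$-heavy conclusion: any extra neighbor of $a_\ell$ or $b_\ell$ in $\clc(G)$ must be shown either to create an $o$-heavy non-edge (ruled out by Lemma~\ref{LeClosure}) or to induce a diamond with the triangle (ruled out by the same lemma). This is the step that consumes the hypotheses most strongly.
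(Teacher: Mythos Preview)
Your proposal has a fundamental gap: the net $N'$ is an induced subgraph of $\clc(G)$, and while its non-edges are certainly non-edges of $G$, its \emph{edges} need not be edges of $G$. In particular, the triangle edges $a_1a_2,a_1a_3,a_2a_3$ and the pendant edges $a_ib_i,a_jb_j$ may all have been created by local $c$-completions. Hence the six-vertex set $\{a_1,a_2,a_3,b_i,b_j,c_j\}$ need not induce a wounded---or anything close to one---in $G$, and your construction of an induced $W$ in $G$ collapses at the outset. The paper deals with this by working through the region decomposition (Lemma~\ref{LeRegion}): it replaces each $b_i$ by an interior neighbor $b'_i$ of $a_i$ in the region $R_i$ (so that $a_ib'_i\in E(G)$ by construction), and then devotes its first claim to showing that the triangle $a_1a_2a_3$ actually lies in $G$. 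That claim is not trivial; it applies Lemma~\ref{LeGeneralized} inside the region $R$ containing $\{a_1,a_2,a_3\}$ to obtain a generalized claw or net in $G$, and then extracts a claw or wounded whose $o$-heavy pair forces $p$-heaviness of $N'$.

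Beyond this, your plan for pinning the degrees to $d_{\clc(G)}(a_\ell)=3$ and $d_{\clc(G)}(b_\ell)=2$ from a single $a$-heavy pendant edge is too optimistic. In the paper this step is the content of three further claims, each constructing a specific wounded in $G$ via the region machinery and chasing the resulting $o$-heavy pair through degree comparisons of the form $d_{G'}(a_1)\ge d_{G'}(v)$ for interior $v$; together they force $V(R)=\{a_1,a_2,a_3\}$, $V(R_i)=\{a_i,c_i\}$, $V(R'_i)=\{c_i,c'_i\}$, and the heaviness of $a_1,b_1,c'_2,c'_3$. Your appeal to diamond-freeness alone cannot produce these conclusions: for instance, the paper proves that the extension vertex $c'_i$ \emph{is} adjacent to $a_1$ in $\clc(G)$, so the non-adjacencies you hope to harvest ``from the $\{K_{1,3},D\}$-free structure of $\clc(G)$'' simply do not all hold.
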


Combining Theorem~\ref{ThCPCZN} with Theorem~\ref{ThCW}, we conclude that the following common property holds.

\begin{theorem}\label{ThCPCZNW}
Let $G$ be a 2-connected $\{K_{1,3},S\}$-$o$-heavy graph, where $S\in\{P_4,P_5,C_3,Z_1,Z_2,B,N,W\}$. Then $\clc(G)$ is claw-free and $N$-$pq$-heavy.
\end{theorem}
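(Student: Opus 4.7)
The plan is to deduce Theorem~\ref{ThCPCZNW} as an immediate consequence of Theorems~\ref{ThCPCZN} and~\ref{ThCW} by splitting on the choice of $S$. In both cases, claw-freeness of $\clc(G)$ is already built into the conclusions of those two theorems (and in any event follows from Theorem~\ref{ThCa}(2)), so the only point needing attention is to verify that the stronger $N$-$p$-heavy conclusion of Theorem~\ref{ThCPCZN} does in fact imply the $N$-$pq$-heavy conclusion stated here.

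First I would record the elementary observation that every $N$-$p$-heavy graph is automatically $N$-$pq$-heavy. Indeed, by the definitions given just before Theorem~\ref{ThCN}, ``$N$-$pq$-heavy'' means that every induced net is either $p$-heavy or $q$-heavy, which is vacuously weaker than requiring every induced net to be $p$-heavy. No graph-theoretic argument is needed at this step; it is a direct unwinding of the two definitions.

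With that observation in hand, the argument proceeds by a trivial case distinction on $S$. If $S\in\{P_4,P_5,C_3,Z_1,Z_2,B,N\}$, I would apply Theorem~\ref{ThCPCZN} to obtain that $\clc(G)$ is claw-free and $N$-$p$-heavy, and invoke the observation above to upgrade this to $N$-$pq$-heavy. If instead $S=W$, I would directly quote Theorem~\ref{ThCW} to conclude that $\clc(G)$ is claw-free and $N$-$pq$-heavy. Since
\[
\{P_4,P_5,C_3,Z_1,Z_2,B,N\}\cup\{W\}=\{P_4,P_5,C_3,Z_1,Z_2,B,N,W\},
\]
these two cases exhaust the hypothesis on $S$.

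There is no genuine obstacle at this stage of the paper: all of the substantive work has been concentrated in the proofs of Theorems~\ref{ThCPCZN} and~\ref{ThCW}, and the present theorem is a bookkeeping consolidation that identifies the property common to all pairs $\{K_{1,3},S\}$ appearing in the characterization of Theorem~\ref{ThLiRyWaZh}.
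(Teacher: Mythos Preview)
Your proposal is correct and matches the paper's approach exactly: the paper presents Theorem~\ref{ThCPCZNW} without a separate proof, introducing it with the sentence ``Combining Theorem~\ref{ThCPCZN} with Theorem~\ref{ThCW}, we conclude that the following common property holds.'' Your case split on $S$ together with the trivial observation that $N$-$p$-heavy implies $N$-$pq$-heavy is precisely this combination.
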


These results form the basis and motivation for characterizing the structure of all 2-connected, $c$-closed, claw-free and $N$-$pq$-heavy graphs in Section~\ref{c-closed}.
The next section contains the proofs of the above results.

\section{Proofs of Theorems~\ref{ThCN}, \ref{ThCPCZN} and \ref{ThCW}}\label{proofs}

Before presenting the proofs, we recall the following lemma from~\cite{LiNing}. Theorem~\ref{ThCN} is almost a direct consequence of this result.

\begin{lemma}[Li and Ning \cite{LiNing}]\label{LeopHeavy}
Let $G$ be a claw-$o$-heavy and $N$-$op$-heavy graph. Then $\clc(G)$ is $N$-$op$-heavy.
\end{lemma}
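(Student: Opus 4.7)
The plan is to proceed by induction on the number of local $c$-completion steps needed to form $\clc(G)$ from $G$. The base case is trivial, and each intermediate graph is claw-$o$-heavy by Lemma~\ref{LeComplation}, so it suffices to prove the single-step claim: if $G$ is claw-$o$-heavy and $N$-$op$-heavy and $x$ is $c$-eligible in $G$, then every induced net of $G^o_x$ is $o$-heavy or $p$-heavy.

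Fix such an induced net $N'$ in $G^o_x$ with triangle $a_1a_2a_3$ and pendants $a_ib_i$. The key structural observation is that $N(x)$ is a clique in $G^o_x$, so $S:=V(N')\cap N(x)$ induces a clique of $N'$, forcing $S$ to be contained either in the triangle $\{a_1,a_2,a_3\}$ or in a single pendant edge $\{a_i,b_i\}$. This immediately handles several configurations: when $x\in V(N')$ the clique constraint forces $x$ to be a pendant $b_i$ (a triangle vertex would have non-clique neighborhood in $N'$), which prevents any new edge inside $V(N')$, and the same holds when $x\notin V(N')$ and $|S|\le 1$. In these configurations $N'$ is already an induced net of $G$, and its $o$- or $p$-heaviness is preserved in $G^o_x$ since degrees do not decrease.

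The substantive work concerns $x\notin V(N')$ with $|S|\ge 2$. When $S=\{a_i,b_i\}$ is a pendant edge, the only possibly new edge in $V(N')$ is $a_ib_i$; I would verify that $\{x,a_i,a_j,a_k,b_j,b_k\}$ (with $x$ in the pendant role vacated by $b_i$) induces a net in $G$ and then apply $N$-$op$-heaviness to it. When $S\subseteq\{a_1,a_2,a_3\}$, I would split by the number of new triangle edges of $N'$: one new edge $a_ia_j$ yields an induced claw $\{a_k,a_i,a_j,b_k\}$ at $a_k$ in $G$; all three new yield an induced claw $\{x,a_1,a_2,a_3\}$ at $x$; two new, say $a_ia_j$ and $a_ia_k$, yield the induced auxiliary net $\{x,a_i,a_j,a_k,b_j,b_k\}$ of $G$ whose triangle is $\{x,a_j,a_k\}$ and in which $a_i$ takes the pendant role at $x$. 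In each configuration the heavy pair produced by claw-$o$-heaviness or $N$-$op$-heaviness of $G$ translates directly to an $o$-heavy non-edge or $a$-heavy triangle edge of $N'$ whenever it avoids $x$.

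The principal obstacle is that a supplied heavy pair may involve $x$ itself, which is outside $V(N')$. The remedy is Lemma~\ref{LeComplation}(1): every $a_i\in S$ satisfies $d_{G^o_x}(a_i)\ge d_{G^o_x}(x)=d_G(x)$. Hence from $d_G(x)+d_G(y)\ge n$ I obtain $d_{G^o_x}(a_i)+d_{G^o_x}(y)\ge n$ for any chosen $a_i\in S$, and depending on whether $a_iy$ is a non-edge or a triangle edge of $N'$ this yields, respectively, an $o$-heavy non-edge of $N'$ or an $a$-heavy pair inside the triangle of $N'$, in either case witnessing the desired property. This single substitution argument makes each of the substantive subcases go through uniformly.
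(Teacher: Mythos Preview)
Your argument is correct, and your single-step induction via local $c$-completions is a genuinely different route from the one the paper (and the cited Li--Ning source) takes. The paper does not re-prove Lemma~\ref{LeopHeavy}, but its proof of the companion Lemma~\ref{LepqHeavy} shows the intended method: work directly in $\clc(G)$, use the region decomposition (Lemma~\ref{LeRegion}) to locate the triangle of the net $M$ inside a single region $R$, pick interior replacements $b_i'$ for the pendants, invoke Lemma~\ref{LeGeneralized} inside $G[I_R\cup\{a_1,a_2,a_3\}]$ to manufacture a non-degenerate generalized claw or net $H'$ in $G$, and then push the heavy pair supplied by the hypothesis back to $\{a_1,a_2,a_3\}$ via the inequality $d_{G'}(w)\le d_{G'}(a_i)$ for interior $w$. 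Your approach bypasses regions entirely: the clique structure of $N_G(x)$ in $G^o_x$ pins $S=V(N')\cap N_G(x)$ to a maximal clique of the net, after which a short case analysis together with Lemma~\ref{LeComplation}(1) does the substitution $x\mapsto a_i$. This is more elementary and self-contained; the trade-off is that the region machinery is what lets the paper handle the more delicate $q$-heavy case uniformly (where exact degree constraints like $d(a_j)=3$, $d(b_j)=2$ must be tracked), whereas extending your stepwise argument to $N$-$pq$-heaviness would require additional care. One trivial omission: in the subcase $S\subseteq\{a_1,a_2,a_3\}$ you enumerate one, two, or three new triangle edges but not zero; of course zero new edges means $N'$ is already induced in $G$, so nothing is lost.
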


We first prove a similar result concerning $N$-$pq$-heavy graphs.

\begin{lemma}\label{LepqHeavy}
Let $G$ be a claw-$o$-heavy and $N$-$pq$-heavy graph. Then $\clc(G)$ is
$N$-$pq$-heavy.
\end{lemma}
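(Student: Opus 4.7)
The plan is to induct on the length of a $c$-closure sequence $G = G_1, \ldots, G_t = \clc(G)$. Since by Lemma~\ref{LeComplation} every $G_i$ is claw-$o$-heavy, it suffices to prove the single-step statement: if $H$ is claw-$o$-heavy and $N$-$pq$-heavy and $x$ is a $c$-eligible vertex of $H$, then $H' := H^o_x$ is $N$-$pq$-heavy. Fix an induced net $N'$ of $H'$ with the labelling of Figure~\ref{N}, so that $a_1a_2a_3$ is the triangle and each $b_i$ is the pendant of $a_i$. Since a $p$-heavy pair of $H$ lying in the triangle of $N'$ persists in $H'$ (the triangle edge is preserved and degrees do not decrease), the only nontrivial cases are (a) $N'$ is induced in $H$ and $q$-heavy but not $p$-heavy there, and (b) $N'$ is not induced in $H$.

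In case (a), assume WLOG the $q$-heavy index is $i = 1$, and write $c_j$ for the unique neighbour of $b_j$ other than $a_j$ ($j \in \{2,3\}$). Heaviness of $a_1, b_1, c_2, c_3$ transfers to $H'$, so the only obstruction to $N'$ being $q$-heavy in $H'$ (still with $i=1$) is that the completion at $x$ enlarges one of $d(a_2), d(a_3), d(b_2), d(b_3)$, whose values are exactly $3, 3, 2, 2$. The smallness of these degrees pins $x$ to a short list of candidates whenever one of $a_2, a_3, b_2, b_3$ lies in $N_H(x)$, and for each candidate I would show that either the completion would add an edge inside $V(N')$---contradicting induced-ness of $N'$ in $H'$---or the $c$-eligibility witness for $x$ (in particular the $o$-heavy partner $z$ from clause~(ii) of the definition) forces some triangle vertex $a_k$ to satisfy $d_H(a_k) \geq n - 2$, yielding an $a$-heavy triangle pair and hence $p$-heaviness of $N'$ already in $H$, contradicting our reduction. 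In case (b), some edge $uv$ of $N'$ is new, with $u, v \in N_H(x)$; enumerating by whether $uv$ is a triangle or pendant edge and by whether $x \in V(N')$, I either lift $N'$ to an induced net of $H$ (and apply the hypothesis) or use Lemma~\ref{LeComplation}(1) together with the $c$-eligibility conditions on $x$ to exhibit directly either an $a$-heavy pair inside the triangle of $N'$ or the exact $q$-heavy degree pattern in $H'$.

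The main obstacle is the rigidity of the $q$-heavy pattern, which prescribes exact degrees $2$ and $3$ on four of the six net vertices and is therefore fragile under even a single local completion. The delicate part of the argument is to show that whenever a completion breaks this pattern, the very structural feature that certified the $c$-eligibility of $x$ (a connected $H^{BC}_x[N(x)]$ in clause~(i), or an $o$-heavy partner $z$ in clause~(ii)) must automatically place a vertex of degree at least $n-2$ inside the triangle of $N'$, producing an $a$-heavy triangle pair in $H'$; the bookkeeping is intricate because the admissible positions of $z$ relative to $V(N')$ are severely constrained by the tight degree bounds on $a_j$ and $b_j$ for $j \neq 1$, and verifying this requires an exhaustive subcase analysis.
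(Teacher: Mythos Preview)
Your inductive approach along the closure sequence is genuinely different from the paper's proof. The paper does \emph{not} induct: it works directly with an induced net $M$ in $G'=\clc(G)$, uses the region decomposition of $G'$ (Lemma~\ref{LeRegion}) to locate the triangle of $M$ inside a region $R$ and the pendants inside adjacent regions $R_i$, and then applies Lemma~\ref{LeGeneralized} to produce a non-degenerate induced generalized claw or net of $G$ sitting inside $I_R\cup\{a_1,a_2,a_3\}\cup\{b'_1,b'_2,b'_3\}$. The hypothesis on $G$ is applied once to this object. If the resulting high-degree pair lies in the central part, the inequality $d_{G'}(w)\le d_{G'}(a_i)$ for interior $w$ pushes the weight onto the triangle and gives $p$-heaviness; if instead one vertex of the pair is a pendant $b'_i$, Lemma~\ref{LeClosure} (no $o$-heavy pairs survive in $\clc(G)$) forces the other to be $a_i$, and the exact degree constraints of $q$-heaviness then pin down $R,R_2,R_3$ and show $M$ itself is $q$-heavy.

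Your step-by-step plan is probably salvageable, and case~(a) goes through essentially as you sketch. The difficulty is concentrated in case~(b) when a \emph{triangle} edge is new, say $a_1a_2\notin E(H)$. Here there is no induced net of $H$ on $V(N')\cup\{x\}$ to ``lift'' to (these seven vertices contain no triangle in $H$ once one checks $x\notin V(N')$ and $xa_3\notin E(H)$), so the argument cannot proceed by lifting alone. One must instead harvest $o$-heavy pairs from several induced claws of $H$ (at $a_1$, at $a_2$, and at $a_3$), combine them with Lemma~\ref{LeComplation}(1) and with whichever clause of $c$-eligibility $x$ satisfies, and split further on whether $z=a_3$ in clause~(ii), whether $a_1a_2\in E(H^{BC}_x)$, etc. This can be pushed through, but it amounts to rediscovering, subcase by subcase, exactly the region-and-interior-vertex structure that the paper invokes wholesale. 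The paper's global argument is substantially shorter and cleaner; it also exploits the fact that $\clc(G)$ has no $o$-heavy pairs (Lemma~\ref{LeClosure}), a property unavailable at intermediate stages of your induction.
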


\begin{proof}
Let $M$ be an induced net in $G'=\clc(G)$. We label the vertices of $M$ as in Figure~\ref{N}. We will prove that $M$ is either $p$-heavy or
$q$-heavy. Let $n=|V(G)|$.

Let $R$ be the region of $G$ containing $\{a_1,a_2,a_3\}$, and let
$R_i$ be the region of $G$ containing $\{a_i,b_i\}$. Note that $a_i$
is a frontier vertex in $R\cap R_i$. If $R_i$ has an interior
vertex, then let $b'_i$ be an interior neighbor of $a_i$ in $R_i$
(see Lemma~\ref{LeRegion}); otherwise ($R_i$ is complete and) let
$b'_i=b_i$. We claim that $b'_ib'_j\notin E(G')$ for $1\leq i<j\leq
3$. Suppose to the contrary that $b'_1b'_2\in E(G')$. Then either
$b'_1\neq b_1$ or $b'_2\neq b_2$. We assume without loss of
generality that $b'_1\neq b_1$, and then $b'_1$ is an interior vertex
of $G$. Then both $b'_2,a_1$ are associated with $b'_1$, and then
$b'_2,a_1$ themselves are associated. Since $b'_2$ is associated
with $a_2$ as well, by Lemma~\ref{LeRegion}, $b'_2\in R$, a
contradiction. Thus as we claimed, $b'_ib'_j\notin E(G')$ for $1\leq
i<j\leq 3$.

Let $I_R$ be the set of interior vertices in $R$. By Lemma~\ref{LeRegion},
there is a path between $a_i$ and $a_j$ with all
internal vertices in $I_R$ ($1\leq i<j\leq 3$). This implies that
$a_1,a_2,a_3$ are connected in the graph
$G[I_R\cup\{a_1,a_2,a_3\}]$. By Lemma~\ref{LeGeneralized}, let $H$
be an induced generalized claw or net in $G[I_R\cup\{a_1,a_2,a_3\}]$
connecting $a_1,a_2,a_3$. Let $H'=G[V(H)\cup\{b'_1,b'_2,b'_3\}]$.
Then $H'$ is a non-degenerate generalized claw or net. Thus there is
an induced claw or net $H''$ of $G$ contained in $H'$.

Since $G$ is claw-$o$-heavy and $N$-$pq$-heavy, there are two vertices
contained in $H'$ with degree sum at least $n$. Let $u,v$ be such
two vertices. Then $d_{G'}(u)+d_{G'}(v)\geq d(u)+d(v)\geq n$. If
$u,v\in V(H)$, then note that $d_{G'}(w)\leq d_{G'}(a_i)$ for
$w\in I_R$ and $a_i\in\{a_1,a_2,a_3\}$. This implies that there are
two vertices in $\{a_1,a_2,a_3\}$ with degree sum at least $n$ in
$G'$, i.e., $M$ is $p$-heavy.

Now we assume that one of $u,v$ is not in $V(H)$. We assume without
loss of generality that $u=b'_1$. By Lemma~\ref{LeClosure},
$v=a_1$. This implies that $H''$ is a $q$-heavy net containing
$a_1,b'_1$, and $a_1,b'_1$ are two heavy vertices of $G$. By the
definition of $q$-heavy nets, $R$ has only three vertices
$a_1,a_2,a_3$, $d(a_2)=d(a_3)=3$ and $d(b'_2)=d(b'_3)=2$. This implies
that $V(R_2)=\{a_2,b_2\}$ and $V(R_3)=\{a_3,b_3\}$. Let $c_2$ and
$c_3$ be the neighbors of $b_2$ and $b_3$ other than $a_2$ and
$a_3$, respectively. By Lemma~\ref{LeRegion} and the fact that
$d(b_2')=d(b_3')=2$, {the two regions containing $\{b_2,c_2\}$,
$\{b_3,c_3\}$, respectively, have only two vertices.}
Note that $c_2$ and $c_3$ are
heavy in $G$. Thus $M$ is $q$-heavy in $\clc(G)$.
\end{proof}

\subsection*{Proof of Theorem \ref{ThCN}.}

Let $G$ be a $\{K_{1,3},N\}$-$o$-heavy graph. By Theorem~\ref{ThCa}
and Lemma~\ref{LeopHeavy}, $\clc(G)$ is claw-free and $N$-$op$-heavy.
Note that $\clc(G)$ has no $o$-heavy pairs. This implies that every
induced net of $\clc(G)$ is not $o$-heavy, and thus, is $p$-heavy. Hence
$\clc(G)$ is claw-free and $N$-$p$-heavy.
\qed

\subsection*{Proof of Theorem \ref{ThCPCZN}.}

The proof of this theorem is based on Theorem~\ref{ThCW}, which will
be proved later.

Note that a $P_4$-$o$-heavy ($C_3$-$o$-heavy, $Z_1$-$o$-heavy or
$B$-$o$-heavy) graph is also $N$-$o$-heavy. Hence, the cases $S=P_4,C_3,Z_1,B$
can be deduced from Theorem~\ref{ThCN} immediately. Now we assume that
$S=P_5$ or $Z_2$.

Let $G$ be a $\{K_{1,3},P_5\}$-$o$-heavy or $\{K_{1,3},Z_2\}$-$o$-heavy
graph and let $G'=\clc(G)$. By Theorems~\ref{ThCa} and \ref{ThCW},
$G'$ is claw-free and $N$-$pq$-heavy. Let $M$ be an induced net of
$G'$. We will prove that $M$ is $p$-heavy. Suppose to the contrary that $M$ is $q$-heavy. We label the vertices of $M$ as in Figure~\ref{N} such that $a_1,b_1$ are heavy in $G'$, $d_{G'}(a_2)=d_{G'}(a_3)=3$ and $d_{G'}(b_2)=d_{G'}(b_3)=2$. Let $c_2$ and $c_3$ be the neighbors of $b_2$ and $b_3$ other than $a_2$ and $a_3$, respectively. Then $c_2,c_3$ are heavy in $G'$. Clearly $d(a_2)=d(a_3)=3$ and $d(b_2)=d(b_3)=2$. Recall that each two heavy vertices of $G'$ are adjacent in $G'$. Thus $a_1,b_1,c_2,c_3$ are contained in a common region $R$.

Note that $c_2b_2a_2a_3b_3$ is an induced copy of $P_5$ in $G$. But any two nonadjacent vertices of this $P_5$ are dissociated. This implies that $G$ is not $P_5$-$o$-heavy. So we conclude that $G$ is $Z_2$-$o$-heavy.

Note that $b_1\neq c_2,c_3$ (for otherwise $b_1b_2$ or $b_1b_3\in E(G')$). Since $b_1$ is heavy in $G'$, and $b_1$ is nonadjacent to $b_1$ itself and $a_2,a_3,b_2,b_3$ in $G'$, this implies that $|V(G')|\geq 10$. Next we prove two claims.

\begin{claim}\label{ClTriangle}
Neither $a_1$ nor $c_2$ is contained in a triangle of $G$ other than $a_1a_2a_3a_1$ and $a_1c_2c_3a_1$.
\end{claim}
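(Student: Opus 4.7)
The plan is to proceed by contradiction. Suppose either $a_1$ or $c_2$ lies in some triangle $T$ of $G$ distinct from $a_1a_2a_3a_1$ and $a_1c_2c_3a_1$. The first step is to pin down, from the $q$-heavy data and $E(G)\subseteq E(G')$, the exact $G$-neighborhoods $N_G(a_2)=\{a_1,a_3,b_2\}$, $N_G(a_3)=\{a_1,a_2,b_3\}$, $N_G(b_2)=\{a_2,c_2\}$, $N_G(b_3)=\{a_3,c_3\}$, and to record that the non-edges $a_1b_2, a_1b_3, c_2a_3, c_3a_2, c_2b_3, c_3b_2$ all hold in $G'$ (hence in $G$), because $M$ is induced in $G'$ and $d_{G'}(a_i)=3$, $d_{G'}(b_i)=2$ for $i\in\{2,3\}$. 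A short direct check using these facts rules out that $T$ contains any of $a_2,a_3,b_2,b_3$: common $G$-neighbors of $a_1$ with $a_2$ or $a_3$ force $T=a_1a_2a_3$, the non-edges $a_1b_2, a_1b_3$ exclude $b_2, b_3$ from any triangle at $a_1$, and the analogous check eliminates $a_2,a_3,b_2,b_3$ from any triangle at $c_2$.

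The core step is to attach a length-two path at $a_1$ or $c_2$ (whichever lies in $T$) so that the resulting five vertices induce a copy of $Z_2$ in $G$ whose triangle is $T$. If $a_1\in T$ and $T$ is disjoint from $\{c_2,c_3\}$, attach $a_1a_2b_2$ at $a_1$; if $a_1\in T$ and $T$ contains $c_2$ (respectively $c_3$), attach $a_1a_3b_3$ at $a_1$ (respectively $a_1a_2b_2$); if $c_2\in T$ and $a_1\notin T$, attach $c_2b_2a_2$ at $c_2$, which covers both the generic situation and the case $c_3\in T$; finally, the case $\{a_1,c_2\}\subseteq T$ with third vertex different from $c_3$ reduces to the $a_1$-case already treated. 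In every instance, the tiny $G$-neighborhoods of $a_2,a_3,b_2,b_3$ together with the listed non-edges yield inducedness of the chosen five-vertex subgraph directly.

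Once the induced $Z_2$ is in hand, $Z_2$-$o$-heaviness of $G$ forces it to contain an $o$-heavy pair. By Lemma~\ref{LeClosure}, any $o$-heavy pair of $G$ must be an edge of $G'=\clc(G)$. However, in each of the constructed copies, the nonadjacent pairs all involve one of $a_2, b_2, a_3, b_3$, or are of the form $\{a_1,b_2\}$ or $\{a_1,b_3\}$; the corresponding edge in $G'$ would force a vertex of $T$ outside $\{a_1,a_2,a_3,b_2,b_3,c_2,c_3\}$ into one of the pinned-down neighborhoods $N_{G'}(a_i)=\{a_1,a_j,b_i\}$ (with $\{i,j\}=\{2,3\}$) or $N_{G'}(b_i)=\{a_i,c_i\}$, or else would contradict the inducedness of $M$ in $G'$. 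Either way we reach a contradiction, completing the proof.

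The main obstacle is purely the case-by-case bookkeeping needed to guarantee that each chosen five-vertex subgraph genuinely induces a $Z_2$; once that is in place, the remainder of the argument is uniform and rests only on the pinned-down $G$- and $G'$-neighborhoods of $a_2,a_3,b_2,b_3$ combined with $Z_2$-$o$-heaviness of $G$.
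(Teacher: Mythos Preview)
Your proof is correct and follows essentially the same approach as the paper: exhibit an induced $Z_2$ on $V(T)\cup\{a_2,b_2\}$ (or $\{a_3,b_3\}$) and then argue that none of its nonadjacent pairs can be $o$-heavy. The paper compresses your case analysis by first noting (via the region structure) that any triangle at $a_1$ other than $a_1a_2a_3a_1$ lies in the region $R$, so one may assume $c_2\notin V(T)$ without further subcases; it then disposes of the $o$-heaviness check in one line by observing that every nonadjacent pair in the $Z_2$ is \emph{dissociated} (hence a non-edge of $G'$). Your explicit verification via the pinned-down $G'$-neighborhoods $N_{G'}(a_i)$ and $N_{G'}(b_i)$ is the same argument unpacked.

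One small expository point: in your final paragraph, the dichotomy ``a vertex of $T$ outside $\{a_1,a_2,a_3,b_2,b_3,c_2,c_3\}$'' versus ``inducedness of $M$'' does not literally cover the pairs $\{c_2,a_3\},\{c_2,b_3\},\{c_3,a_2\},\{c_3,b_2\}$ that arise in your Cases B--D. You did, however, record exactly these non-edges of $G'$ in your first paragraph, so the contradiction is already in hand; you just need to cite it in the wrap-up rather than relying on the two stated alternatives.
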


\begin{proof}
Suppose that $a_1$ is contained in a triangle $T$ in $R$ with $T\neq a_1c_2c_3a_1$. We assume without loss of generality that $c_2\notin V(T)$. Then the subgraph induced by $V(T)\cup\{a_2,b_2\}$ is a $Z_2$, and each two nonadjacent vertices of the $Z_2$ are dissociated, a contradiction.

Suppose that $c_2$ is contained in a triangle $T\neq a_1c_2c_3a_1$. By the above analysis, $a_1\notin V(T)$. Then the subgraph induced by $V(T)\cup\{b_2,a_2\}$ is a $Z_2$, and each two nonadjacent vertices of the $Z_2$ are dissociated, a contradiction.
\end{proof}

\begin{claim}\label{ClHeavy}
Both $a_1$ and $c_2$ are not heavy in $G$.
\end{claim}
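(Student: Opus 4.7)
My plan is to argue by contradiction, deriving a numerical contradiction from the claw-$o$-heaviness hypothesis combined with a neighborhood count powered by Claim~\ref{ClTriangle}; the same mechanism, with the roles of $a_1, a_2, w_i$ replaced by $c_2, b_2, u_i$ and the claw $\{c_2, a_1, u_1, b_2\}$ in place of $\{a_1, c_2, w_1, a_2\}$, will handle the statement for $c_2$.

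Suppose $d_G(a_1) \ge n/2$. Since $\{a_2, a_3, c_2, c_3\} \subseteq N_G(a_1)$, I list the extra neighbors as $w_1, \dots, w_k \in N_G(a_1) \setminus \{a_2, a_3, c_2, c_3\}$ with $k = d_G(a_1) - 4 \ge n/2 - 4$. By Claim~\ref{ClTriangle}, $a_1$ lies in no triangle other than $a_1 a_2 a_3$ and $a_1 c_2 c_3$; hence $N_G(w_i) \cap N_G(a_1) = \emptyset$ for every $i$, so each $w_i$ is $G$-non-adjacent to $a_2, a_3, c_2, c_3$ and to every other $w_j$. The known $G$-neighborhoods $N_G(b_2) = \{a_2, c_2\}$ and $N_G(b_3) = \{a_3, c_3\}$ also force $w_i b_2, w_i b_3 \notin E(G)$. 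Finally $w_i \in V(R)$: since $\{a_1, a_2, a_3\}$ is a maximal clique of $G'$ (hence a region) and $\clc(G)$ is the line graph of a triangle-free graph by Theorem~\ref{ThCa}(2), the vertex $a_1$ lies in precisely two regions ($R$ and $\{a_1, a_2, a_3\}$), and $w_i \notin \{a_2, a_3\}$ then forces $w_i \in V(R)$.

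The heart of the argument is to apply claw-$o$-heaviness to $\{a_1, c_2, w_1, a_2\}$, an induced claw with center $a_1$ whose three leaves are pairwise non-adjacent by the above together with $a_2 c_2 \notin E(G)$. The leaf-pairs $\{c_2, a_2\}$ and $\{w_1, a_2\}$ are dissociated, since $N_{G'}(a_2) = \{a_1, a_3, b_2\}$ contains neither $c_2$ nor $w_1$, so by Lemma~\ref{LeClosure} neither can be $o$-heavy in $G$. Hence $\{c_2, w_1\}$ is $o$-heavy, giving $d_G(c_2) + d_G(w_1) \ge n$. To promote this to ``$w_1$ is heavy in $G$'' I need the upper bound $d_G(a_1) + d_G(c_2) \le n$: Claim~\ref{ClTriangle} implies $N_G(a_1) \cap N_G(c_2) = \{c_3\}$ (any other common $G$-neighbor of $a_1, c_2$ would create a third forbidden triangle), so $|N_G(a_1) \cup N_G(c_2)| = d_G(a_1) + d_G(c_2) - 1$; since $b_3 \notin N_G(a_1) \cup N_G(c_2)$, this union lives inside $V(G) \setminus \{b_3\}$, yielding the bound. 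Combined with $a_1$ heavy this forces $d_G(c_2) \le n/2$, and then $d_G(w_1) \ge n/2$.

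The finishing count is then immediate: $N_G(w_1)$ must avoid $\{w_1, a_2, a_3, c_2, c_3, b_2, b_3\}$ and every $w_j$ with $j \ne 1$, a total of $k + 6$ vertices, so $d_G(w_1) \le n - k - 6$. Together with $d_G(w_1) \ge n/2$ this yields $k \le n/2 - 6$, which contradicts $k \ge n/2 - 4$. The step I expect to be the main obstacle is the sub-case in which $c_2$ is itself heavy, because then the claw-$o$-heavy inequality $d_G(c_2) + d_G(w_1) \ge n$ alone does not force $w_1$ to be heavy; the union bound $d_G(a_1) + d_G(c_2) \le n$ --- which depends crucially on both Claim~\ref{ClTriangle} (pinning down the one-element intersection) and the observation $b_3 \notin N_G(a_1) \cup N_G(c_2)$ --- is the essential extra input that closes this gap.
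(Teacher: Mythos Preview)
Your argument rests on the assumption $\{a_2,a_3,c_2,c_3\}\subseteq N_G(a_1)$, but at this stage of the proof the adjacencies $a_1c_2,a_1c_3\in E(G)$ (and even $c_2c_3\in E(G)$) are \emph{not} known. All four of $a_1,b_1,c_2,c_3$ lie in the region $R$, so they are pairwise adjacent in $G'=\clc(G)$; nothing so far forces them to be adjacent in $G$. Indeed, the paper only establishes $a_1c_2\in E(G)$ \emph{after} Claim~\ref{ClHeavy}, by observing that if $a_1c_2\notin E(G)$ then $\{a_1,a_3,a_2,b_2,c_2\}$ induces a $Z_2$ whose only possibly $o$-heavy pair is $\{a_1,c_2\}$, contradicting Claim~\ref{ClHeavy}. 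So using $a_1c_2\in E(G)$ here would be circular. This unproved edge is essential to your argument at three points: (i) the set $\{a_1,c_2,w_1,a_2\}$ is an induced claw with centre $a_1$ only if $a_1c_2\in E(G)$; (ii) concluding $w_ic_2,w_ic_3\notin E(G)$ from $N_G(w_i)\cap N_G(a_1)=\emptyset$ needs $c_2,c_3\in N_G(a_1)$; and (iii) the intersection identity $N_G(a_1)\cap N_G(c_2)=\{c_3\}$ relies on Claim~\ref{ClTriangle} applied to the triangle $a_1c_2v$, which is only a triangle if $a_1c_2\in E(G)$. The mirror claw $\{c_2,a_1,u_1,b_2\}$ for the $c_2$ case has the same defect.

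The paper sidesteps this by building claws that avoid the unknown edge. For $c_2$ heavy it picks $c'_2,c''_2\in N_G(c_2)\setminus\{a_1,b_2,c_3\}$ (two such vertices exist since $d_G(c_2)\ge n/2\ge 5$) and uses the claw $\{c_2,b_2,c'_2,c''_2\}$; since $d_G(b_2)=2$ and the other two leaf pairs are dissociated, some $c'_2$ is heavy, and then $d_G(c_2)+d_G(c'_2)\ge n$ together with $a_2\notin N_G(c_2)\cup N_G(c'_2)$ forces a common neighbour, producing a forbidden triangle through $c_2$. For $a_1$ heavy the same idea works with the claw $\{a_1,a_2,a'_1,a''_1\}$, where $a'_1,a''_1$ are neighbours of $a_1$ in $R$ other than $c_3$; the leaf $a_2$ (with $d_G(a_2)=3$, dissociated from $a'_1,a''_1$) plays the role your $c_2$ was meant to, and the resulting heavy $a'_1$ can be taken distinct from $c_2$ because $c_2$ was already shown not to be heavy.
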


\begin{proof}
If $c_2$ is heavy in $G$, then recalling that $|V(G')|\geq 10$, there are
two vertices $c'_2,c''_2\in N(c_2)\backslash\{a_1,b_2,c_3\}$. By
Claim~\ref{ClTriangle}, $c'_2c''_2\notin E(G)$, and the subgraph of
$G$ induced by $\{c_2,b_2,c'_2,c''_2\}$ is a claw. Since $d(b_2)=2$,
either $c'_2$ or $c''_2$ is heavy in $G$. We assume without loss of
generality that $c'_2$ is heavy. Since $d(c_2)+d(c'_2)\geq n$ and
$a_2\notin N(c_2)\cup N(c'_2)$, $c_2$ and $c'_2$ have a common
neighbor $c'''_2$. Then $c_2c'_2c'''_2c_2$ is a triangle other than
$a_1c_2c_3a_1$, a contradiction. Hence, we conclude that $c_2$ is not
heavy in $G$.

If $a_1$ is heavy in $G$, then there are two neighbors $a'_1,a''_1$
of $a_1$ in $R$ other than $c_3$. By Claim~\ref{ClTriangle},
$a'_1a''_1\notin E(G)$ and one of $\{a'_1,a''_1\}$ is heavy in $G$.
We assume without loss of generality that $a'_1$ is heavy in $G$.
Then $a'_1\neq c_2$. Note that $d(a_1)+d(a'_1)\geq n$ and $b_2\notin
N(a_1)\cup N(a'_1)$. Thus $a_1$ and $a'_1$ have a common neighbor
$a'''_1$. Hence, $a_1a'_1a'''_1a_1$ is a triangle other than
$a_1c_2c_3a_1$, a contradiction. We conclude that $a_1$ is not
heavy in $G$.
\end{proof}

If $a_1c_2\notin E(G)$, then the subgraph induced by $\{a_1,a_3,a_2,b_2,c_2\}$ is a $Z_2$. This implies that $\{a_1,c_2\}$ is an $o$-heavy pair of $G$, and either $a_1$ or $c_2$ is heavy in $G$, contradicting Claim~\ref{ClHeavy}. Thus we conclude
that $a_1c_2\in E(G)$.

If $R$ has no interior vertices, then $R$ is complete and $a_1b_1c_2a_1$ is a triangle other than $a_1c_2c_3a_1$, a contradiction. So we conclude that $R$ has an interior vertex. By Lemma~\ref{LeRegion}, let $a'_1$ and $c'_2$ be interior neighbors of $a_1$ and $c_2$ in $R$, respectively. Then $a_1c'_2,a'_1c_2\notin E(G)$. Thus the subgraphs of $G$ induced by $\{a_1,a_2,c_2,a'_1\}$ and $\{c_2,a_1,b_2,c'_2\}$ are claws. Hence, $d(a'_1)+d(c_2)\geq n$
and $d(a_1)+d(c'_2)\geq n$. This implies that either $d(a_1)+d(a'_1)\geq n$ or $d(c_2)+d(c'_2)\geq n$. Note that $b_3\notin N(a_1)\cup N(a'_1)\cup N(c_2)\cup N(c'_2)$. Thus either $a_1,a'_1$ or $c_2,c'_2$ have a common neighbor. This implies that either $a_1$ or $c_2$ is contained in a triangle other than $a_1c_2c_3a_1$, a contradiction.

This completes the proof of Theorem~\ref{ThCPCZN}. \qed

\subsection*{Proof of Theorem \ref{ThCW}.}

Let $M$ be an induced net of $G'=\clc(G)$. We suppose to the contrary that $M$ is neither $p$-heavy nor $q$-heavy. We label the vertices of $M$ as in Figure~\ref{N}. Let $n=|V(G)|$.

Let $R$ be the region of $G$ containing $\{a_1,a_2,a_3\}$, and let $R_i$ be the region of $G$ containing $\{a_i,b_i\}$. Note that $a_i$ is a frontier vertex in $R_i\cap R$. If $R_i$ has an interior vertex, then let $b'_i$ be an interior neighbor of $a_i$ in $R_i$; otherwise let $b'_i=b_i$. By Lemma~\ref{LeRegion}, $b'_ib'_j\notin
E(G')$ for $1\leq i<j\leq 3$. Next we prove four claims.

{\setcounter{claim}{0}}
\begin{claim}\label{ClR}
$a_1a_2,a_1a_3,a_2a_3\in E(G)$.
\end{claim}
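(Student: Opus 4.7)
The plan is to argue by contradiction and exploit the symmetry of the claim: suppose $a_1a_2\notin E(G)$, and show this forces $M$ to be $p$-heavy, contradicting our standing assumption. First I would apply Lemma~\ref{LeRegion}(4) to $a_1,a_2\in V(R)$ to obtain an induced path $P$ in $G$ from $a_1$ to $a_2$ whose internal vertices are all interior to $R$. Since $a_1a_2\notin E(G)$, $P$ has length at least $2$; take $P$ of minimum length and let $u$ be the neighbor of $a_1$ on $P$. Because $u$ is interior to $R$, it lies in only this region, and together with Lemma~\ref{LeRegion}(3) this yields the book-keeping non-edges in $G'$: $ub'_i\notin E(G')$ for every $i$, $a_ib'_j\notin E(G')$ for $i\neq j$, and $b'_ib'_j\notin E(G')$ for $i\neq j$ (the last two exactly as in the proof of Lemma~\ref{LepqHeavy}). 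Any $o$-heavy pair of $G$ among such pairs would already be an $o$-heavy pair of $G'$, contradicting that $G'$ is $c$-closed, so these pairs can be ruled out whenever they appear in the subgraphs below.

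The core step is a case split on whether $ua_3$, $a_1a_3$, $a_2a_3$ lie in $E(G)$. The representative decisive case is $ua_3\in E(G)$ with exactly one of $a_1a_3,a_2a_3$ missing --- say $a_1a_3\notin E(G)$: then $\{u,a_2,a_3,b'_2,a_1,b'_1\}$ induces a wounded in $G$ with triangle $ua_2a_3$, pendant $b'_2$ at $a_2$, and pendant path $u,a_1,b'_1$ at $u$. Since $G$ is $W$-$o$-heavy, this wounded contains an $o$-heavy pair, and the non-edge list above eliminates every candidate except $\{a_1,a_2\}$ and $\{a_1,a_3\}$, both of which are triangle pairs of $M$ and hence force $M$ to be $p$-heavy. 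If $ua_3\in E(G)$ and both $a_1a_3,a_2a_3$ are missing, then $\{u,a_1,a_2,a_3\}$ is an induced claw centered at $u$, and claw-$o$-heaviness immediately delivers a $p$-heavy triangle pair. If $ua_3\notin E(G)$ and (say) $a_2a_3\in E(G)$, then $\{u,a_2,a_3,b'_2\}$ is an induced claw centered at $a_2$, and an analogous elimination, combined with iterating the argument on the possibly still-missing edge $a_1a_3$, yields the contradiction.

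The hardest case, and the only one I expect will require real care, is $a_1a_3,a_2a_3,ua_3\in E(G)$: then $\{a_1,a_2,a_3,u\}$ induces a diamond, and no forbidden subgraph is visible on just these four vertices. Here I would invoke Lemma~\ref{LeRegion}(2) to obtain a second interior neighbor $v$ of $a_3$ in $R$ (it exists because the interior of $R$ is already nonempty, so $R$ is not the trivial complete region), and re-run the case analysis with $v$ in place of $u$; the adjacencies of $v$ with $a_1,a_2$ either return us to one of the subcases already resolved or, after pairing $v$ with an appropriate $b'_j$, exhibit a new induced wounded whose only admissible $o$-heavy pair lies inside the triangle of $M$. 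A routine adaptation of these constructions on three consecutive vertices of $P$ in place of $a_1,u,a_2$ handles the case in which the shortest induced path $P$ has length greater than $2$.
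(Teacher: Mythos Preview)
Your case analysis has two genuine gaps, and the paper's argument avoids them by taking a different, uniform route.

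First, your ``hardest case'' ($a_1a_2\notin E(G)$ while $a_1a_3,a_2a_3,ua_3\in E(G)$) is actually immediate: the set $\{a_3,a_1,a_2,b'_3\}$ already induces a claw in $G$ centred at $a_3$, since $a_1b'_3,a_2b'_3\notin E(G')$ by your own bookkeeping. The only non-dissociated pair among the leaves is $\{a_1,a_2\}$, so claw-$o$-heaviness forces it to be $o$-heavy and $M$ is $p$-heavy. Your detour via a second interior neighbour $v$ of $a_3$ is both unnecessary and unjustified: Lemma~\ref{LeRegion}(2) guarantees only that $a_3$ has \emph{some} interior neighbour, and nothing prevents this from being $u$ itself (e.g.\ when $V(R)=\{a_1,a_2,a_3,u\}$).

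Second, and more seriously, your case split omits the configuration $ua_3,a_1a_3,a_2a_3\notin E(G)$ (still under $a_1a_2\notin E(G)$ and $|P|=2$). Here the induced subgraph on $\{u,a_1,a_2,a_3,b'_1,b'_2,b'_3\}$ is just the path $b'_1a_1ua_2b'_2$ together with the disjoint edge $a_3b'_3$; it contains no claw and no wounded, so neither hypothesis bites. To make progress you must pull in an interior path to $a_3$, and your ``routine adaptation on three consecutive vertices of $P$'' does not supply one. This is exactly where the paper's approach pays off: rather than fixing $P$ and splitting on adjacencies with $a_3$, it applies Lemma~\ref{LeGeneralized} once to $G[I_R\cup\{a_1,a_2,a_3\}]$ to get an induced generalized claw or net $H$ connecting all three $a_i$, attaches the $b'_i$ to obtain a non-degenerate $H'$, and observes that if any edge $a_ia_j$ is absent then $H$ cannot be the bare triangle, whence $H'$ contains an induced claw or wounded. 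Every nonadjacent pair in $H'$ outside $I_R\cup\{a_1,a_2,a_3\}$ is dissociated, so the resulting $o$-heavy pair lies in $I_R\cup\{a_1,a_2,a_3\}$; since every interior vertex of $R$ has $G'$-degree at most $d_{G'}(a_i)$, this transfers to an $a$-heavy pair inside $\{a_1,a_2,a_3\}$, making $M$ $p$-heavy. No case split on $u$, no induction on $|P|$, and no separate treatment of how $a_3$ sits relative to~$P$.
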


\begin{proof}
Let $I_R$ be the set of interior vertices in $R$. By Lemma~\ref{LeRegion},
$a_1,a_2,a_3$ are connected in $G[I_R\cup\{a_1,a_2,a_3\}]$. By Lemma~\ref{LeGeneralized}, there is a generalized claw or net $H$ in $G[I_R\cup\{a_1,a_2,a_3\}]$ connecting $a_1,a_2,a_3$. Thus $H'=G[V(H)\cup\{a'_1,a'_2,a'_3\}]$ is a non-degenerate generalized claw or net.

If one of $a_1a_2,a_1a_3,a_2a_3$ is not in $E(G)$, then $H'$ contains an induced claw or wounded of $G$, implying that $H'$ has two nonadjacent vertices with degree sum at least $n$ in $G$. Note that any two dissociated vertices have degree sum in $G$ less than
$n$. Hence, there are two vertices in $I_R\cup\{a_1,a_2,a_3\}$ with degree sum at least $n$. Also note that $d_{G'}(a_i)\geq d_{G'}(v)\geq d(v)$ for every $v\in I_R$. This implies that there are two vertices in $\{a_1,a_2,a_3\}$ with degree sum at least $n$ in $G'$. Hence, $M$ is $p$-heavy, a contradiction.
\end{proof}

We suppose without loss of generality that $a_1$ has the largest degree in $G$ among $\{a_1,a_2,a_3\}$. Since $G$ is 2-connected, every region has at least two frontier vertices. For $i=2,3$, let $c_i$ be a frontier vertex of $R_i$ other than $a_i$, let $c'_i$ be a neighbor of $c_i$ outside $R_i$, and let $R'_i$ be the region of $G$ containing $\{c_i,c'_i\}$.

By Lemma~\ref{LeRegion}, $a_1$ is dissociated with all vertices in
{$R_2\setminus \{a_2\}$}, in particular
$a_1c_2\notin E(G')$. By a similar analysis, $a_1c_3,a_2c_1,a_2c_3,a_3c_1,a_3c_2\notin E(G')$.

\begin{claim}\label{ClR23}
For $i=2,3$, (1) $a_ic_i\in E(G)$; (2) $a_1c'_i\in E(G')$; (3)
$V(R'_i)=\{c_i,c'_i\}$; and (4) $V(R_i)=\{a_i,c_i\}$.
\end{claim}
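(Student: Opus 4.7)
The plan is to prove the four parts of Claim~\ref{ClR23} in the order (1), (4), (2), (3), via a unified contradiction template anchored in the $W$-$o$-heavy hypothesis. The master observation is this: whenever we exhibit an induced wounded subgraph $H$ in $G$, the hypothesis gives an $o$-heavy pair inside $H$; but any pair $\{u,v\}$ of dissociated vertices satisfies $d_G(u)+d_G(v)\leq d_{G'}(u)+d_{G'}(v)<n$ because $G'=\clc(G)$ has no $o$-heavy pair and $d_G\leq d_{G'}$, so the heavy pair of $H$ must consist of two associated vertices. Furthermore, if $v$ is interior in a region $R^*$ containing $u$, then $N_{G'}(v)\cup\{v\}\subseteq V(R^*)\subseteq N_{G'}(u)\cup\{u\}$, so $d_{G'}(v)\leq d_{G'}(u)$; together with $d_G(v)\leq d_{G'}(v)$ this turns any inequality $d_G(u)+d_G(v)\geq n$ into $2d_{G'}(u)\geq n$, i.e., $u$ is heavy in $G'$.

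For (1), I would suppose $a_ic_i\notin E(G)$ and apply Lemma~\ref{LeRegion}(4) to obtain an induced $a_i$--$c_i$ path $P=a_iu_1u_2\cdots u_m=c_i$ of length $m\geq 2$ with $u_1,\ldots,u_{m-1}$ interior in $R_i$. The candidate subgraph is $H=G[\{a_1,a_2,a_3,b_1,u_1,u_2\}]$, which I claim is an induced wounded: the triangle is Claim~\ref{ClR}, the pendant $a_1b_1$ comes from $M$, and the pendant path $a_iu_1u_2$ from the prefix of $P$. The absence of the remaining edges follows by dissociation ($u_1$ is interior in $R_i$ and hence has no neighbor outside $R_i$; $b_1\in R_1$ is dissociated from $R_i$ since $b_1a_i\notin E(G')$) together with the structural fact (immediate from Theorem~\ref{ThCa}(2)) that any two distinct regions of $G$ share at most one vertex. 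The only associated nonadjacent pair of $H$ is $\{a_i,u_2\}$, both in $R_i$, so $d_G(a_i)+d_G(u_2)\geq n$. When $u_2$ is interior (i.e., $m\geq 3$), the interior-inequality forces $a_i$ heavy in $G'$; combining this with the choice that $a_1$ has maximum degree in $G$ among $\{a_1,a_2,a_3\}$ makes $a_1$ also heavy whenever $d_G(a_i)\geq n/2$, so $\{a_1,a_i\}$ is an $a$-heavy pair on the triangle of $M$, i.e., $M$ is $p$-heavy, a contradiction.

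Parts (4), (2), and (3) follow the same template. For (4), assuming (1), I would suppose $R_i$ contains a third vertex $w$; whether $w$ is interior or an additional frontier vertex of $R_i$, an induced claw, wounded, or net built from $\{a_1,a_2,a_3,b_1,w,\ldots\}$ together with an appropriate pendant yields the same kind of extraction and forces $M$ $p$-heavy. For (2), supposing $a_1c'_i\notin E(G')$ (so $c'_i$ is dissociated from $a_1$), the subgraph $H=G[\{a_1,a_2,a_3,b_1,c_i,c'_i\}]$ is an induced wounded with the $P_3$-pendant $a_ic_ic'_i$ from $a_i$ and the pendant $a_1b_1$ from $a_1$, and the same extraction delivers the contradiction. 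Part (3) is the analog of (4) applied to $R'_i$ and the vertex $c'_i$.

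The principal technical obstacle is the boundary subcase $m=2$ of part (1), where $u_2=c_i$ is a frontier vertex of $R_i$ and the interior-inequality $d_{G'}(u_2)\leq d_{G'}(a_i)$ fails. Handling it will require either extending the induced path one more step through $c'_i$ into $R'_i$ (thereby anticipating (2) and (3)) and rerunning the wounded analysis, or an iterated heaviness argument that uses the fact (from Lemma~\ref{LeClosure}) that all heavy vertices of $G'$ form a clique, which is incompatible with $M$ being neither $p$-heavy nor $q$-heavy. A secondary obstacle is verifying the inducedness of the wounded subgraphs—in particular, ruling out unexpected edges such as $b_1c_i$, which amounts to showing $b_1\notin R'_i$ via the one-vertex-overlap property of distinct maximal cliques in the line-graph-of-$C_3$-free description of $G'$.
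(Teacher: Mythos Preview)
Your template is right, but several concrete pieces do not go through. First, you cannot use $b_1$ as the pendant at $a_1$: the net $M$ lives in $G'=\clc(G)$, so $a_1b_1\in E(G')$ but there is no reason for $a_1b_1\in E(G)$, and your wounded subgraphs must be induced in $G$. The paper uses $b'_1$ (a $G$-neighbour of $a_1$ interior in $R_1$, or $b_1$ when $R_1$ is complete) precisely to guarantee this edge. Second, your contradiction mechanism in (1) is both incomplete and aims at the wrong target. From $d_G(a_i)+d_G(u_2)\ge n$ you try to conclude ``$a_i$ heavy in $G'$, hence $a_1$ heavy in $G'$, hence $M$ is $p$-heavy''; but you only know $d_G(a_1)\ge d_G(a_i)$, not $d_{G'}(a_1)\ge n/2$, and the interior-inequality step collapses when $u_2=c_i$---the obstacle you yourself flag as unresolved. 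The missing idea, which dissolves your ``principal obstacle'' for free, is to trade $a_i$ for $a_1$ on the other side: since $d_{G'}(a_1)\ge d_G(a_1)\ge d_G(a_i)$, one gets $d_{G'}(a_1)+d_{G'}(u_2)\ge n$; and because $a_1$ is dissociated from every vertex of $R_i\setminus\{a_i\}$ (including $c_i$, as recorded just before the claim), $\{a_1,u_2\}$ is an $o$-heavy pair in $G'$, a direct contradiction that works uniformly for all $m\ge 2$.

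Your sketch for (2) is also too optimistic. Nothing rules out $a_{5-i}c'_i\in E(G)$ (then $c'_i\in R_{5-i}$) or $b'_1c'_i\in E(G)$, and in either case your six-vertex set fails to be an induced wounded; the paper's proof of (2) is a genuine three-case analysis on exactly these edges, each branch producing a different claw or wounded and invoking the same $o$-heavy-pair-in-$G'$ trick. The analogous edge $b'_1c_i$ already obstructs the inducedness in (1) when $m=2$; the paper handles it by switching the pendant from $b'_1$ to $b'_3$ (at most one of $b'_1,b'_3$ can be associated with $c_i$), which your ``one-vertex-overlap'' remark does not cover. Finally, the order $(1),(4),(2),(3)$ is not viable: the paper's argument for (4) uses $c'_i\in V(R_1)$, which is a consequence of (2), so (2) must come before (4).
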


\begin{proof}
By symmetry, we assume $i=2$.

(1) Suppose that $a_2c_2\notin E(G)$. Then let $P$ be a path from $a_2$ to $c_2$ all internal vertices of which are interior vertices in $R_2$. Let $a_2,a'_2,a''_2$ be the first three vertices of $P$. If $b'_1c_2\notin E(G)$, then the subgraph of $G$ induced by
$\{a_3,a_1,b'_1,a_2,a'_2,a''_2\}$ is a wounded; if $b'_1c_2\in E(G)$, then $b'_3c_2\notin E(G)$, and the subgraph of $G$ induced by $\{a_1,a_3,b'_3,a_2,a'_2,a''_2\}$ is a wounded. In any case, we have $d(a_2)+d(a''_2)\geq n$ (since any other nonadjacent pairs contained
in the wounded are dissociated). Since $d_{G'}(a_1)\geq d_{G'}(a_2)\geq d(a_2)$, we have $d_{G'}(a_1)+d_{G'}(a''_2)\geq n$, and $\{a_1,a''_2\}$ is an $o$-heavy pair of $G'$, a contradiction.

(2) Suppose to the contrary that $a_1c'_2\notin E(G')$ (in particular $a_1c'_2\notin E(G)$).

First we assume that $a_3c'_2\in E(G)$. Then $c'_2\in V(R_3)$. If
$b'_3c'_2\notin E(G)$, then ($R_3$ is not complete, $b'_3$ is an
interior vertex and) the subgraph of $G$ induced by
$\{a_3,a_2,b'_3,c'_2\}$ is a claw. This implies that
$d(b'_3)+d(c'_2)\geq n$. Since $d_{G'}(a_1)\geq d_{G'}(a_3)\geq
d_{G'}(b'_3)\geq d(b'_3)$, $d_{G'}(a_1)+d_{G'}(c'_2)\geq n$,
implying $\{a_1,c'_2\}$ is an $o$-heavy pair of $G'$, a contradiction.
Thus we conclude that $b'_3c'_2\in E(G)$, and the subgraph of $G$
induced by $\{b'_3,c'_2,c_2,a_3,a_1,b'_1\}$ is a wounded. But any
two nonadjacent vertices in the wounded are dissociated, a
contradiction. So we have that $a_3c'_2\notin E(G)$.

Now suppose that $b'_1c'_2\in E(G)$. If $b'_1$ and $c_2$ are
associated, or $b'_3$ and $c_2$ are associated, then $c_2\neq b'_2$
and the subgraph induced by $\{a_3,a_2,b'_2,a_1,b'_1,c'_2\}$ is a
wounded. But any two vertices of the wounded are dissociated, a
contradiction. This implies that $b'_1c_2,b'_3c_2\notin E(G')$. If
$b'_3c'_2\in E(G')$, then the subgraph of $G'$ induced by
$\{c'_2,b'_1,b'_3,c_2\}$ is a claw, a contradiction. This implies
that $b'_3$ and $c'_2$ are dissociated. Now the subgraph induced by
$\{a_1,a_3,b'_3,a_2,c_2,c'_2\}$ is a wounded, and any two
nonadjacent vertices of the wounded are dissociated, a
contradiction. So we have that $b'_1c'_2\notin E(G)$.

Now the subgraph induced by $\{a_3,a_1,b'_1,a_2,c_2,c'_2\}$ is a
wounded, and any two nonadjacent vertices of the wounded are
dissociated, a contradiction.

(3) If $R_2$ has a third vertex $c''_2$, then similarly as in (2), we
can see that $a_1c''_2\in E(G')$. By Lemma~\ref{LeRegion}, $a_1\in
V(R'_2)$ and $c_2\in V(R)$, a contradiction.

(4) Suppose that $R_2$ has a third vertex. If $R_2$ has an interior
vertex, then let $a'_2$ be an interior neighbor of $a_2$ in $R_2$;
otherwise let $a'_2$ be a arbitrary vertex of
$V(R_2)\backslash\{a_2,c_2\}$. We claim that $a'_2c_2\in E(G)$;
otherwise $a'_2$ is an interior vertex and $\{a'_2,c_2\}$ is an
$o$-heavy pair of $G$. Since $d_{G'}(a_1)\geq d_{G'}(a_2)\geq
d_{G'}(a'_2)\geq d(a'_2)$, $\{a_1,c_2\}$ is an $o$-heavy pair of $G'$,
a contradiction. By (2), $c'_2\in V(R_1)$. Clearly $a'_2\notin
V(R_1)$. Note that $|V(R_1)|\geq|V(R_2)|\geq 3$. Let $c''_2$ be a
neighbor of $c'_2$ in $V(R_1)\backslash\{a_1\}$. Then the subgraph
of $G$ induced by $\{a'_2,a_2,a_3,c_2,c'_2,c''_2\}$ is a wounded.
But each two nonadjacent vertices of the wounded are dissociated, a
contradiction.
\end{proof}

\begin{claim}\label{ClR'}
$V(R)=\{a_1,a_2,a_3\}$.
\end{claim}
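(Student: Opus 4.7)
The plan is to argue by contradiction: suppose there is $v\in V(R)\setminus\{a_1,a_2,a_3\}$, and split into two subcases depending on whether $R$ is complete as a subgraph of $G$.

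\emph{Case 1: $R$ is complete.} Any fourth vertex $v$ is frontier (since $R$ has no interior vertex), lying in some other region $R_v$. If $R_v=R_1$, then $a_1,v\in V(R)\cap V(R_1)$, and $b_1\in V(R_1)$ is associated with both of them via $R_1$; Lemma~\ref{LeRegion}(3) then forces $b_1\in V(R)$, whence $a_2b_1\in E(\clc(G))$, contradicting that $M$ is induced in $\clc(G)$. Otherwise $R_v\neq R_1$; since $v\notin\{a_2,a_3,b_2,b_3,c'_2,c'_3\}$ (the middle four because $a_1b_i\notin E(\clc(G))$ puts $b_i\notin V(R)$, and $v=c'_i$ would force $R_v=R_1$ by Claim~\ref{ClR23}(2)), applying Claim~\ref{ClR23} gives $R_v\notin\{R_2,R_3,R'_2,R'_3\}$ as well. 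The subgraph $G[\{v,a_2,a_3,b_2,b_3,c'_3\}]$ is then an induced wounded in $G$---triangle $va_2a_3$, pendant $b_2$ on $a_2$, path $b_3c'_3$ on $a_3$---and each of its nine non-adjacent pairs is dissociated (verified by checking against the six regions $R,R_1,R_2,R_3,R'_2,R'_3$ from Claim~\ref{ClR23}); by Lemma~\ref{LeClosure} none is $o$-heavy in $G$, contradicting the $W$-$o$-heavy hypothesis.

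\emph{Case 2: $R$ is not complete.} Then $R$ has an interior vertex, and Lemma~\ref{LeRegion}(2) applied to $a_2$ gives an interior $\beta\in V(R)$ with $\beta a_2\in E(G)$. The aim is to prove $\beta a_3\in E(G)$, which reduces to Case~1 by taking $v=\beta$, interior, so that $v$ automatically avoids $R_1,R_2,R_3,R'_2,R'_3$. Suppose for contradiction $\beta a_3\notin E(G)$: then $G[\{a_2,\beta,a_3,b_2\}]$ is an induced claw at $a_2$ whose non-edges $\{\beta,b_2\}$ and $\{a_3,b_2\}$ are dissociated, hence (by Lemma~\ref{LeClosure}) not $o$-heavy in $G$. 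The claw-$o$-heavy property then forces $d_G(\beta)+d_G(a_3)\geq n$. Combined with $d_G(\beta)\leq |V(R)|-1$ (since $\beta$ is interior) and $d_G(a_3)\leq |V(R)|$ (since $N_G(a_3)\subseteq V(R)\cup\{b_3\}$ by Claim~\ref{ClR23}(4)), this yields $|V(R)|\geq (n+1)/2$. A symmetric claw at $a_3$ with an interior neighbor $\beta'$ of $a_3$ gives an analogous bound, and using the tight structure $V(R_1)\supseteq\{a_1,b_1,c'_2,c'_3\}$ with $V(R)\cap V(R_1)=\{a_1\}$ (which also holds here by the Case~1 argument ruling out Case~2a) plus the vertex count $|V(R)|+|V(R_1)|\leq n-1$ (from the disjointness of $V(R)\cup V(R_1)$ with $\{b_2,b_3\}$) should yield a direct contradiction.

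The main bookkeeping obstacle is the Case~1 verification that each of the nine non-adjacent pairs of the candidate wounded is dissociated; this is immediate from matching against the six regions identified in Claim~\ref{ClR23}. The main substantive obstacle is Case~2, where the claw-$o$-heavy argument only yields the degree inequality $d_G(\beta)+d_G(a_3)\geq n$ rather than $\beta a_3\in E(G)$ outright; turning this inequality into a contradiction requires pairing it with analogous bounds from further induced claws and with the global vertex count provided by the already-determined regions $R_1,R_2,R_3,R'_2,R'_3$.
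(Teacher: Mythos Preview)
Your Case~1 is essentially correct and matches the paper's endgame: once a vertex $v\in V(R)\setminus\{a_1,a_2,a_3\}$ is adjacent in $G$ to both $a_2$ and $a_3$, the set $\{v,a_2,a_3,c_2,c_3,c'_i\}$ (which equals your $\{v,a_2,a_3,b_2,b_3,c'_3\}$ since $b_i=c_i$ by Claim~\ref{ClR23}(4)) induces a wounded whose nonadjacent pairs are all dissociated. The paper does not split into cases; it simply takes $a$ to be an interior neighbour of $a_1$ when one exists, and an arbitrary fourth vertex of $R$ otherwise.

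Your Case~2, however, has a genuine gap, and the detour you propose---bounding $|V(R)|$ from the degree inequality and then appealing to a global vertex count against $R_1,\dots,R'_3$---is neither necessary nor completed (as you yourself flag). The missing step is immediate. From the induced claw $\{a_2,\beta,a_3,b_2\}$ you correctly extract $d_G(\beta)+d_G(a_3)\ge n$. But $\beta$ is an \emph{interior} vertex of $R$ while $a_2$ is a \emph{frontier} vertex of $R$, so
\[
d_{G'}(a_2)\ \ge\ |V(R)|-1\ =\ d_{G'}(\beta)\ \ge\ d_G(\beta),
\]
and therefore $d_{G'}(a_2)+d_{G'}(a_3)\ge d_G(\beta)+d_G(a_3)\ge n$. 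This makes $\{a_2,a_3\}$ an $a$-heavy pair in $G'$, i.e.\ $M$ is $p$-heavy, contradicting the standing hypothesis. That already yields $\beta a_3\in E(G)$, after which your Case~1 wounded applies with $v=\beta$; no counting is required. The paper runs exactly this argument with $a$ chosen as an interior neighbour of $a_1$ and the claw $\{a_1,a_2,a,b'_1\}$ centred at $a_1$, concluding $d_{G'}(a_1)+d_{G'}(a_2)\ge n$ in the same way.
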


\begin{proof}
Suppose not. If $R$ has an interior vertex, then let $a$ be an
interior neighbor of $a_1$; otherwise let $a$ be an arbitrary vertex
in $V(R)\backslash\{a_1,a_2,a_3\}$. We claim that $aa_2,aa_3\in
E(G)$. Otherwise $a$ is interior and by Claim~\ref{ClR}, the
subgraph of $G$ induced by $\{a_1,a_2,a,b_1\}$ or
$\{a_1,a_3,a,b_1\}$ is a claw. This implies that $\{a,a_2\}$ or
$\{a,a_3\}$ is an $o$-heavy pair. Note that $d_{G'}(a_1)\geq
d_{G'}(a)\geq d(a)$. This implies that the degree sum of $a_1,a_2$,
or of $a_1,a_3$ in $G'$ is at least $n$, and $M$ is $p$-heavy, a
contradiction.

Now the subgraph induced by $\{a,a_3,c_3,a_2,c_2,c'_2\}$ is a
wounded. But each two nonadjacent vertices of the wounded are
dissociated, a contradiction.
\end{proof}

By Claim \ref{ClR}, \ref{ClR23} and \ref{ClR'}, we can see that
$b_2=c_2$, $b_3=c_3$. Also note that $d_{G'}(a_2)=d_{G'}(a_3)=3$ and
$d_{G'}(b_2)=d_{G'}(b_3)=2$.

\begin{claim}\label{ClHeavy'}
$a_1,b_1,c'_2,c'_3$ are heavy.
\end{claim}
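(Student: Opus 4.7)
My plan is to show that each of $a_1,b_1,c'_2,c'_3$ is heavy in $G$, hence also in $G'$. Once this is done, together with Claim~\ref{ClR23} (which already supplies $d_{G'}(a_j)=3$, $d_{G'}(b_j)=2$ for $j=2,3$ and identifies $c'_j$ as the ``outer'' neighbor of $b_j$), the net $M$ becomes $q$-heavy in $G'$ (with $i=1$), contradicting the standing assumption that $M$ is neither $p$-heavy nor $q$-heavy.

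First I would extract two structural observations. Since $a_1$ is adjacent in $G'$ to $a_2,a_3,c'_2,c'_3$, and by Claims~\ref{ClR23} and~\ref{ClR'} the cross pairs $\{a_i,c'_j\}$ with $i,j\in\{2,3\}$ are dissociated (hence non-adjacent in $G'$), claw-freeness of $G'$ at $a_1$ forces $c'_2c'_3\in E(G')$. Applying Lemma~\ref{LeRegion}(3) to $c'_3$ and the associated pair $\{a_1,c'_2\}$ then places all three of $a_1,c'_2,c'_3$ in a common region $R^*$ of $G$.

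The engine of the argument is a family of induced wounded subgraphs. For each $i\in\{2,3\}$, I would examine $H_i:=G[\{a_1,a_2,a_3,b_2,b_3,c'_i\}]$. Using Claims~\ref{ClR}--\ref{ClR'}, every non-edge of $H_i$ other than $\{a_1,c'_i\}$ is a dissociated pair, which by Lemma~\ref{LeClosure} has $G$-degree sum strictly less than $n$. Hence, when $a_1c'_i\notin E(G)$, $H_i$ is an induced wounded and the $W$-$o$-heavy hypothesis forces its unique associated non-edge to be $o$-heavy, giving
\[
d_G(a_1)+d_G(c'_i)\ge n.
\]
A parallel wounded $G[\{a_1,a_2,a_3,b_1,b_2,c'_i\}]$ (with $b_1$ in place of $b_3$) yields analogous inequalities of the form $d_G(b_1)+d_G(c'_i)\ge n$. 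Combining the four inequalities with the failure of $p$-heaviness (which gives $d_{G'}(a_1)+d_{G'}(a_j)<n$ for $j=2,3$, and so tightly constrains the possible splitting of each degree sum) yields $d_G(v)\ge n/2$ separately for each $v\in\{a_1,b_1,c'_2,c'_3\}$.

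The main obstacle is the sub-case bookkeeping when some of the ``closure edges'' $a_1c'_i$ or $b_1c'_i$ lie already in $E(G)$, so the corresponding wounded is no longer induced. In those sub-cases I would fall back on a claw centered at $a_1$ with neighbor triple $\{a_2,b_1,c'_i\}$, whose pairs $\{a_2,b_1\}$ and $\{a_2,c'_i\}$ are dissociated; this forces the remaining non-edge $\{b_1,c'_i\}$ to be $o$-heavy and recovers the missing inequality. If even this claw is destroyed by $b_1c'_i\in E(G)$, then Lemma~\ref{LeRegion}(3) deposits $b_1$ into $R^*$, so $R^*\supseteq\{a_1,b_1,c'_2,c'_3\}$ and the heaviness of $c'_i$ (and of $a_1,b_1$) can be read off directly from $|V(R^*)|$ together with the already-known edge to $b_i$. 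The bulk of the work is verifying, for each combination of memberships of $a_1c'_2,a_1c'_3,b_1c'_2,b_1c'_3,c'_2c'_3$ in $E(G)$ versus only in $E(G')$, that exactly one associated non-edge survives so that the $W$-$o$-heavy (or $K_{1,3}$-$o$-heavy) property pinpoints the desired $o$-heavy pair.
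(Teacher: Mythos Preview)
Your overall plan is on the right track, but two concrete gaps prevent it from going through.

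\textbf{First gap: the vertex $b_1$.} Throughout you treat $b_1$ as a $G$-neighbour of $a_1$, forming a ``parallel wounded'' $G[\{a_1,a_2,a_3,b_1,b_2,c'_i\}]$ and a claw $G[\{a_1,a_2,b_1,c'_i\}]$. But $a_1b_1$ is an edge of the net $M$ in $G'=\clc(G)$, not of $G$; nothing in the preceding claims forces $a_1b_1\in E(G)$. The paper avoids this by working with $b'_1$, an interior neighbour of $a_1$ inside $R_1$ (or $b'_1=b_1$ when $R_1$ is complete), for which $a_1b'_1\in E(G)$ holds by construction. Without this substitution neither your parallel wounded nor your fallback claw is guaranteed to be an induced subgraph of $G$.

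\textbf{Second gap: the ``four inequalities''.} Even with $b'_1$ in place of $b_1$, your parallel wounded contains \emph{two} associated non-edges, namely $\{a_1,c'_i\}$ and $\{b'_1,c'_i\}$ (all three vertices lie in $R_1$), so $W$-$o$-heaviness only guarantees that one of them is $o$-heavy; you cannot extract $d_G(b'_1)+d_G(c'_i)\ge n$ on top of $d_G(a_1)+d_G(c'_i)\ge n$. And even if all four inequalities held, summing gives only that the average of the four degrees is at least $n/2$; the failure of $p$-heaviness ($d_{G'}(a_1)+d_{G'}(a_j)<n$ with $d_{G'}(a_j)=3$) yields an \emph{upper} bound on $d_{G'}(a_1)$, not a lower one, so it does not ``tightly constrain the splitting'' in the direction you need.

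\textbf{What the paper actually does.} The missing structural ingredient is that $a_1,b_1,c'_2,c'_3$ (and $b'_1$) all lie in $R_1$, with $G'$-degrees differing by at most a constant: $d_{G'}(a_1)=|R_1|+1$, $d_{G'}(c'_2)=d_{G'}(c'_3)=|R_1|$, $d_{G'}(b_1)\ge|R_1|-1$, $d_{G'}(b'_1)=|R_1|-1$. Hence a \emph{single} $o$-heavy inequality, together with the degree gain when passing from $G$ to $G'$, bounds $|R_1|$ from below and makes all four heavy at once. Concretely: if $a_1c'_2,a_1c'_3\notin E(G)$, one wounded gives $d_G(a_1)+d_G(c'_2)\ge n$, and since $a_1$ gains the two edges to $c'_2,c'_3$ and $c'_2$ gains at least one edge, $d_{G'}(a_1)+d_{G'}(c'_2)\ge n+3$; combined with $d_{G'}(a_1)=d_{G'}(c'_2)+1$ this forces $d_{G'}(c'_2)\ge n/2+1$, hence all four heavy. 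If (say) $a_1c'_2\in E(G)$ and $b'_1c'_2\notin E(G)$, the claw on $\{a_1,a_2,b'_1,c'_2\}$ gives $d_G(b'_1)+d_G(c'_2)\ge n$, and the same degree-relation trick finishes. If $b'_1c'_2\in E(G)$ as well, the wounded $G[\{b'_1,c'_2,b_2,a_1,a_3,b_3\}]$ has only dissociated non-edges, an immediate contradiction. Your outline recovers the first wounded but misses this degree-relation argument, which is the crux of the claim.
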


\begin{proof}
We first suppose that $a_1c'_2,a_1c'_3\notin E(G)$. Then
the
subgraph of $G$ induced by $\{a_1,a_3,b_3,a_2,b_2,c'_2\}$ is a
wounded. This implies that $d(a_1)+d(c'_2)\geq n$. Note that
$d_{G'}(a_1)\geq d(a_1)+2$ and $d_{G'}(c'_2)\geq d(c'_2)+1$. Thus
$d_{G'}(a_1)+d_{G'}(c'_2)\geq n+3$. Note that
$d_{G'}(a_1)=d_{G'}(c'_2)+1$, $d_{G'}(c'_3)=d_{G'}(c'_2)$ and
$d_{G'}(b_1)\geq d_{G'}(c'_2)-1$. This implies that $d_{G'}(a_1)\geq
n/2+2$, $d_{G'}(c'_2)=d_{G'}(c'_3)\geq n/2+1$ and $d_{G'}(b_1)\geq
n/2$.

Now we assume without loss of generality that $a_1c'_2\in E(G)$. If
$b'_1c'_2\notin E(G)$, then $b'_1$ is interior and $\{b'_1,c'_2\}$
is an $o$-heavy pair. Note that $d_{G'}(b'_1)\geq d(b'_1)+1$ and
$d_{G'}(c'_2)\geq d(c'_2)+1$. Thus $d_{G'}(b'_1)+d_{G'}(c'_2)\geq
n+2$. Note that $d_{G'}(b'_1)=d_{G'}(c'_2)-1$. We have
$d_{G'}(b'_1)\geq n/2$. Thus $a_1,b_1,c'_2,c'_3$ are heavy. Now we
assume that $b'_1c'_2\in E(G)$. Then the subgraph induced by
$\{b'_1,c'_2,b_2,a_1,a_3,b_3\}$ is a wounded. But each two
nonadjacent vertices of the wounded are dissociated, a
contradiction.
\end{proof}

By Claims \ref{ClR23}, \ref{ClR'} and \ref{ClHeavy'}, $M$ is
$q$-heavy, a contradiction.

This completes the proof of Theorem~\ref{ThCW}. \qed

\section{Structure of $c$-closed graphs}\label{c-closed}

In this section, we are going to characterize the structure of all 2-connected, $c$-closed, claw-free and $N$-$pq$-heavy graphs. For this we need the following classes of graphs, as well as the classes $\mathcal{C}_1^N$ and $\mathcal{C}_2^N$ we defined before in Section~\ref{intro} (see Figures~\ref{C_1^N} and \ref{C_2^N}).

\begin{construction}
Denote by $\mathcal{C}_3^{NQ}$ the class of graphs obtained by the following construction: (i) let $K$ be a complete graph with $|V(K)|\geq 4$ and let $b_2a_2a_3b_3$ be a path of length three outside $K$; (ii) choose three distinct vertices $a_1,c_2,c_3$ in $K$; and (iii) add edges $c_2b_2$, $a_1a_2$, $a_1a_3$, and $c_3b_3$, as shown in Figure~\ref{G_3}. It is clear that every graph in $\mathcal{C}_3^{NQ}$ is hamiltonian.
\end{construction}

\begin{figure}[htbp]\centering
\begin{tikzpicture}[scale=0.5]

 \draw[fill=lightgray](0,0)circle(3);
 \path(2.3,1.5) coordinate  (a);\draw [fill=black] (a) circle (0.15cm);
  \path(2.3,-1.5) coordinate  (b);\draw [fill=black] (b) circle (0.15cm);
  \path(2,0) coordinate  (c);\draw [fill=black] (c) circle (0.15cm);
  \path(3.8,1.5) coordinate  (d);\draw [fill=black] (d) circle (0.15cm);
  \path(3.8,-1.5) coordinate  (e);\draw [fill=black] (e) circle (0.15cm);
  \path(5.4,1.5) coordinate  (f);\draw [fill=black] (f) circle (0.15cm);
    \path(5.4,-1.5) coordinate  (g);\draw [fill=black] (g) circle (0.15cm);
\draw [line width=0.8] (f)--(g);
\draw [line width=0.8] (f)--(a);
\draw [line width=0.8] (b)--(g);
\draw [line width=0.8] (f)--(c);
\draw [line width=0.8] (c)--(g);
\node [above] at (2.3,1.5){$c_2$};
\node [below] at (2.3,-1.5){$c_3$};
\node [left] at (2,0){$a_1$};
\node [above] at (3.8,1.5){$b_2$};
\node [below] at (3.8,-1.5){$b_3$};
\node [above] at (5.4,1.5){$a_2$};
\node [below] at (5.4,-1.5){$a_3$};
\end{tikzpicture}
\caption{The class of graphs $\mathcal{C}_3^{NQ}$.}\label{G_3}
\end{figure}
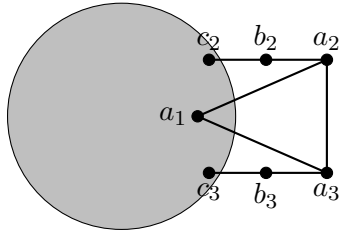

We use the graphs from the classes $\mathcal{C}_1^N$, $\mathcal{C}_2^N$ and $\mathcal{C}_3^{NQ}$ as ingredients for  defining the next four classes of more complicated graphs. All these graphs have in common that they consist of a clique or two cliques intersecting in one vertex, and subgraphs from $\mathcal{C}_1^N$, $\mathcal{C}_2^N$ or $\mathcal{C}_3^{NQ}$ that intersect these cliques in a particular way.

\begin{construction}
Denote by $\mathcal{C}_1^{NP}$ the class of graphs, each member $G$ of which has the following properties.
\begin{itemize}
\item [(i)] $G$ has a maximal clique $K$ containing all heavy vertices;
\item [(ii)] for each component $H$ of $G-K$, one of the following holds:
$$(a)~G[V(H)\cup K]\in \mathcal{C}_1^N,~~(b)~G[V(H)\cup K]\in \mathcal{C}_2^N,$$
and moreover, $G-K$ has at least two components; and if $G-K$ has exactly two components, then at least one of them satisfies ($b$); and
\item [(iii)] letting $u_1,u_2,u_3$ be three distinct frontier vertices of $K$ and letting $v_i$, $i=1,2,3$, be a neighbor of $u_i$ outside $K$, if $\{v_1,v_2,v_3\}$ is independent in $G$, then $\{u_1,u_2,u_3\}$ contains an $a$-heavy pair.
\end{itemize}
\end{construction}

\begin{construction}\label{CoC_2^NP}
Denote by $\mathcal{C}_2^{NP}$ the class of graphs, each member $G$ of which has the following properties.
\begin{itemize}
\item [(i)]$G$ has a maximal clique $K$ containing all heavy vertices;
\item [(ii)] there is a maximal clique $K'$ such that $K$ intersects $K'$ at one vertex, say $u_0$;
\item [(iii)] each component $H$ of $G-(K\cup K')$  satisfies at least one of the following:
\begin{gather*}
    (a)~N(H)\subset K~\text{and}~G[V(H)\cup K]\in \mathcal{C}_1^N,~~(b)~N(H)\subset K~\text{and}~G[V(H)\cup K]\in \mathcal{C}_2^N,\\~~(c)~N(H)\subset K'~\text{and}~G[V(H)\cup K']\in \mathcal{C}_1^N,~~(d)~N(H)\cap K'\neq \emptyset~\text{and}~N(H)\cap K\neq\emptyset~\\ \text{and}~G[V(H)\cup K\cup K']\in \mathcal{C}_2^N,
\end{gather*}
and moreover, there is at least one component satisfying (d), and there are exactly two components satisfying (c) or (d);
\item [(iv)] letting $u_1,u_2$ be two distinct frontier vertices other than $u_0$ of {$G[K']$}, and letting $v_i$, $i=0,1,2$, be a neighbor of $u_i$ outside $G[K']$, if $\{v_0,v_1,v_2\}$ is independent in $G$, then $\{u_0,u_1,u_2\}$ contains an $a$-heavy pair; and
\item [(v)] letting $x_1,x_2$ be two distinct frontier vertices other than $u_0$ of $K$, and letting $y_0$ be a neighbor of $u_0$ outside $G[K]$, and letting $y_i$, $i=1,2$, be a neighbor of $x_i$ outside $G[K]$, if $\{u_0,x_1,x_2\}$ is independent in $G$, then $\{y_0,y_1,y_2\}$ contains an $a$-heavy pair.
\end{itemize}
\end{construction}

\begin{construction}\label{CoC_1^NPQ}
Denote by $\mathcal{C}_1^{NPQ}$ the class of graphs, each member $G$ of which has the following properties.
\begin{itemize}
\item [(i) ] $G$ has a clique $K$ with $|K|\geq\frac{|V(G)|}{2}$; and
\item [(ii)] for each component $H$ of $G-K$, one of the following holds:
$$(a)~G[V(H)\cup K]\in \mathcal{C}_1^N,~~(b)~G[V(H)\cup K]\in \mathcal{C}_2^N,~~(c)~G[V(H)\cup K]\in \mathcal{C}_3^{NQ},$$
and moreover, there is at least one component $H$ satisfying (c).
\end{itemize}
\end{construction}

\begin{construction}\label{CoC_2^NPQ}
Denote by $\mathcal{C}_2^{NPQ}$ the class of graphs, each member $G$ of which has the following properties.
\begin{itemize}
\item [(i) ] $G$ has a clique $K$ with $|K|\geq\frac{|V(G)|}{2}$;
\item [(ii) ] there is a maximal clique $K'$ such that $K$ intersects $K'$ at one vertex, say $u_0$;
\item [(iii)] each component $H$ of $G-(K\cup K')$ satisfies one of the following:
\begin{gather*}
(a)~N(H)\subset K~\text{and}~G[V(H)\cup K]\in \mathcal{C}_1^N,~~(b)~N(H)\subset K~\text{and}~G[V(H)\cup K]\in \mathcal{C}_2^N,\\~~(c)~N(H)\subset K~\text{and}~G[V(H)\cup K]\in \mathcal{C}_3^{NQ},
~~(d)~N(H)\subset K'~\text{and}~G[V(H)\cup K']\in \mathcal{C}_1^N,\\
~~(e)~N(H)\cap K\neq\emptyset~\text{and}~N(H)\cap K'\neq\emptyset~\text{and}~G[V(H)\cup K\cup K']\in \mathcal{C}_2^N,
\end{gather*}
and moreover, there is at least one component satisfying (c), there is at least one component satisfying (e), and there are exactly two components satisfying (d) or (e); and
\item [(iv)] letting $u_1,u_2$ be two distinct frontier vertices other than $u_0$ of $G[K']$, and letting $v_i$, $i=0,1,2$, be
a
 neighbor of $u_i$ outside $G[K']$, if $\{v_0,v_1,v_2\}$ is independent in $G$, then $\{u_0,u_1,u_2\}$ contains an $a$-heavy pair.
\end{itemize}
\end{construction}

It is straightforward to verify the next observation.

\begin{observation}
Every graph in $\mathcal{C}_1^{N}\cup \mathcal{C}_2^{N}\cup \mathcal{C}_1^{NP}\cup \mathcal{C}_2^{NP}\cup \mathcal{C}_1^{NPQ}\cup \mathcal{C}_2^{NPQ}$ is hamiltonian.
\end{observation}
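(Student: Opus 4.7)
The plan is to handle each of the six classes separately, starting with the three base classes $\mathcal{C}_1^N$, $\mathcal{C}_2^N$, $\mathcal{C}_3^{NQ}$ and then building the composite classes $\mathcal{C}_i^{NP}$, $\mathcal{C}_i^{NPQ}$ on top of them. In every case I will exhibit a Hamilton cycle explicitly, using only the fact that each $K^i$ (or $K$, $K'$) is complete and the prescribed matching structure on the link sets $U^i_1,U^i_2$.

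For $\mathcal{C}_1^N$ I would use a ``there-and-back'' traversal of the chain $K^1,\dots,K^t$. Pick two disjoint matching edges $u^i_2 u^{i+1}_1$ and $\tilde u^i_2 \tilde u^{i+1}_1$ for each link, which is possible because $|U^1_2|,|U^t_1|\ge 2$ and $|U^i_1|,|U^i_2|\ge 2$ for $2\le i\le t-1$. Since $|V(K^i)|\ge 4$ in each interior clique and $K^i$ is complete, it contains two internally disjoint Hamilton paths between the chosen endpoints, one going ``outward'' and one going ``back''; combining them with a Hamilton path in $K^1$ and in $K^t$ joining the two chosen entry points yields a Hamilton cycle. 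For $\mathcal{C}_2^N$ the construction is similar but cyclic: pick a single matching edge between consecutive cliques (or use the identified vertex when $|U^i_2|=1$), insert a Hamilton path through each clique between its two selected attachment vertices, and close up the cycle; the extra requirement $|U^j_1|\ge 2$ when $t=3$ is exactly what allows the construction to avoid a repeated vertex in that small case. For $\mathcal{C}_3^{NQ}$ the Hamilton cycle is immediate: $a_1\,a_2\,b_2\,c_2\,P\,c_3\,b_3\,a_3\,a_1$, where $P$ is any Hamilton path of $K\setminus\{a_1\}$ from $c_2$ to $c_3$, which exists because $K$ is complete with $|V(K)|\ge 4$.

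For $\mathcal{C}_1^{NP}$, $\mathcal{C}_2^{NP}$, $\mathcal{C}_1^{NPQ}$ and $\mathcal{C}_2^{NPQ}$ I would combine the base-case Hamilton cycles using the fact that, for each component $H$ of $G-K$ (resp.\ $G-(K\cup K')$), the subgraph $G[V(H)\cup K]$ or $G[V(H)\cup K']$ or $G[V(H)\cup K\cup K']$ lies in one of the base classes and is therefore hamiltonian. From such a Hamilton cycle one extracts, for each component $H$, a path $P_H$ that starts and ends at distinct vertices of $K$ (or of $K$ and $K'$, in the matching case for $\mathcal{C}_2^{NP}$, $\mathcal{C}_2^{NPQ}$) and visits all of $V(H)$ internally. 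Since $K$ (and $K'$) are complete, these ``attachment pairs'' can be chosen distinct for different components, and one then stitches the paths $P_H$ together through $K$ and $K'$, reserving any vertices of $K\cup K'$ that were not used as endpoints to pad the cycle inside the cliques. For the two-clique constructions, the shared vertex $u_0$ plays the role of the hinge around which the segments in $K$ and the segments in $K'$ are joined.

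The main obstacle is bookkeeping in the composite classes: one must ensure that (i) every vertex of $K$ (and of $K'$) is visited exactly once even though many component-subgraphs each occupy some vertices of $K$, and (ii) the attachment endpoints selected for different components can be made disjoint. The hypothesis that there are exactly two components meeting $K'$ in classes $\mathcal{C}_2^{NP}$ and $\mathcal{C}_2^{NPQ}$, together with the requirement that the cliques have enough unused vertices, is precisely what lets the stitching go through; the $o$-/$a$-heavy-pair conditions in the constructions are not needed for hamiltonicity itself and only play a role in the converse direction in later sections. Once the stitching is organized one clique at a time and the base-case Hamilton cycles of Paragraph~2 are invoked as black boxes, the verification reduces to this combinatorial routing, which is why the statement is listed as an observation.
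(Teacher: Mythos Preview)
The paper gives no proof of this observation at all; it simply states ``It is straightforward to verify the next observation'' and moves on. Your plan is a correct and natural way to carry out that verification, and it matches what any reader would reconstruct: exhibit Hamilton cycles in the three base classes $\mathcal{C}_1^N,\mathcal{C}_2^N,\mathcal{C}_3^{NQ}$ directly from the clique-and-matching structure, then for the composite classes extract from each component $H$ a Hamilton path of $H$ whose two endpoints attach to distinct vertices of $K$ (or one to $K$ and one to $K'$ in the crossing case), and stitch these paths together through the complete graphs $K$ and $K'$.

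Two small points are worth tightening. First, your phrase ``two internally disjoint Hamilton paths'' in an interior clique of $\mathcal{C}_1^N$ is a misnomer: what you actually need (and what exists, since $K^i$ is complete on $\ge 4$ vertices with four designated distinct endpoints) is a partition of $V(K^i)$ into two paths with prescribed endpoints, not two Hamilton paths. Second, the side condition ``$|U^j_1|\ge 2$ for some $j$'' when $t=3$ in $\mathcal{C}_2^N$ is \emph{not} what makes the Hamilton cycle go through; even without it the cyclic traversal works. That condition is present for structural reasons (so that the graph is the line graph of a triangle-free graph), and you are right that, like the $a$-heavy-pair clauses in the composite constructions, it plays no role in the hamiltonicity argument. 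Finally, the key fact that attachment vertices in $K$ coming from different components are automatically distinct (and distinct from $u_0$) deserves one line of justification: in a $c$-closed graph every vertex lies in at most two maximal cliques, so a vertex of $K$ has all its neighbours outside $K$ in a single other clique and hence in a single component of $G-K$.
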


Motivated and inspired by Ryj\'a\v{c}ek's characterization of the structure of $r$-closed $\{K_{1,3},N\}$-free graphs (Theorem~\ref{Ryjacek'N}), we characterize the structure of  $c$-closed, claw-free and $N$-$pq$-heavy graphs. We start with the counterpart for $N$-$p$-heavy graphs.

\begin{theorem}\label{ThCP}
Let $G$ be a graph {of order $n\geq 10$}. Then $G$ is a 2-connected, $c$-closed, claw-free and $N$-$p$-heavy graph if and only if $G\in \mathcal{C}_1^N\cup\mathcal{C}_2^N\cup \mathcal{C}_1^{NP}\cup \mathcal{C}_2^{NP}$.
\end{theorem}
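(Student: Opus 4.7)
The plan is to treat sufficiency and necessity separately. For the \emph{if} direction, each of the four classes must be checked against 2-connectedness, claw-freeness, $c$-closedness (i.e.\ the absence of induced diamonds and of $o$-heavy pairs), and $N$-$p$-heaviness. The structural properties are essentially read off the constructions: each graph is built by joining cliques along small subsets, so induced diamonds are ruled out by design and $o$-heavy pairs are ruled out by the bound $|K|\ge \frac{n}{2}$ together with the clique-intersection structure. $N$-$p$-heaviness is automatic for $\mathcal{C}_1^N\cup\mathcal{C}_2^N$ (these are $N$-free by Theorem \ref{Ryjacek'N}) and is exactly what conditions (iii) of $\mathcal{C}_1^{NP}$ and (iv)--(v) of $\mathcal{C}_2^{NP}$ are engineered to guarantee.

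For the \emph{only if} direction, let $G$ satisfy the hypotheses. A $c$-closed claw-free graph is automatically $r$-closed: otherwise some $r$-eligible vertex $x$ has nonadjacent $u,v\in N(x)$ joined by a common neighbor $w\in N(x)$, and then $\{x,u,v,w\}$ induces a diamond, contradicting Lemma \ref{LeClosure}. Hence, if $G$ is $N$-free, Theorem \ref{Ryjacek'N} places it in $\mathcal{C}_1^N\cup\mathcal{C}_2^N$ and we are done. Otherwise $G$ contains an induced net whose triangle, by $N$-$p$-heaviness, carries an $a$-heavy pair; by Lemma \ref{LeClosure} all heavy vertices form a clique, and by Lemma \ref{LEHeavy} either (a) every $a$-heavy pair of $G$ lies in a single maximal clique $K$ that contains all heavy vertices, or (b) every $a$-heavy pair lies in $K\cup K'$, where $K'$ is a second maximal clique meeting $K$ in exactly one heavy vertex.

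In Case (a) I would examine each component $H$ of $G-K$ via the induced subgraph $G[V(H)\cup K]$. The key step is to show that this subgraph is 2-connected, $r$-closed, claw-free and $N$-free, so that Theorem \ref{Ryjacek'N} forces it into $\mathcal{C}_1^N\cup\mathcal{C}_2^N$; the $N$-freeness is the crucial sub-step, since any induced net inside $G[V(H)\cup K]$ would have its $a$-heavy pair inside $K$, and then Lemmas \ref{LeRegion} and \ref{LeGeneralized} trace the remaining pendant structure back into $V(H)$ to produce a contradiction. Condition (iii) of $\mathcal{C}_1^{NP}$ follows by contrapositive: three pairwise nonadjacent external neighbors attached to three frontier vertices of $K$ build an induced net in $G$ whose triangle lies in $K$ and hence must contain an $a$-heavy pair. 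The component-count requirement in (ii) is forced by the very existence of an induced net, which demands two disjoint pendant limbs from $K$. Case (b) is handled analogously by partitioning the components of $G-(K\cup K')$ according to whether their neighborhoods lie in $K$, in $K'$, or meet both; the vertex $u_0=K\cap K'$ acts as the bridge of Construction \ref{CoC_2^NP}, the net hypothesis forces at least one component of type (d), and conditions (iv)--(v) follow by the same contrapositive argument.

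The main obstacle will be the careful bookkeeping in Case (b), specifically verifying that every component of $G-(K\cup K')$ is restricted to one of the four permitted configurations in Construction \ref{CoC_2^NP} and that the required cardinalities of type-(c), (d), (e) components are met. The contrapositive proofs of conditions (iii), (iv), (v) must construct explicit induced nets from putative independent triples while avoiding accidental induced claws or nets whose triangles escape $K\cup K'$; this is delicate because the non-degeneracy of the generalized claw or net produced by Lemma \ref{LeGeneralized} has to be guaranteed by a frontier--interior adjacency analysis. Once these pieces are in place, the characterization follows cleanly from the structural dichotomy of Lemma \ref{LEHeavy} combined with Theorem \ref{Ryjacek'N}.
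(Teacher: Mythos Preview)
Your proposal is correct and follows essentially the same route as the paper: split into the net-free case (handled directly by Theorem~\ref{Ryjacek'N}) and the net-containing case, invoke Lemma~\ref{LEHeavy} to locate all $a$-heavy pairs in one or two maximal cliques $K$, $K'$, analyze each component $H$ of $G-(K\cup K')$ by showing $G[V(H)\cup K]$ (or the appropriate variant) is net-free via Lemmas~\ref{LeRegion} and~\ref{LeGeneralized} so that Theorem~\ref{Ryjacek'N} applies, and then read off conditions (iii)--(v) by contrapositive from the $N$-$p$-heaviness of the obvious induced nets. Your explicit remark that a $c$-closed claw-free graph is $r$-closed (via the diamond-free criterion of Lemma~\ref{LeClosure}) is a useful clarification that the paper leaves implicit when it appeals to Theorem~\ref{Ryjacek'N}.
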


Our main result of this section is the following more general result.

\begin{theorem}\label{ThCPQ}
Let $G$ be a graph {of order $n\geq 10$}. Then $G$ is a 2-connected, $c$-closed, claw-free and $N$-$pq$-heavy graph if and only if $G\in \mathcal{C}_1^N\cup\mathcal{C}_2^N\cup \mathcal{C}_1^{NP}\cup \mathcal{C}_2^{NP}\cup\mathcal{C}_1^{NPQ}\cup \mathcal{C}_2^{NPQ}$.
\end{theorem}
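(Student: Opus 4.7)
The plan is to prove both directions separately, with the forward direction (necessity) reduced to the two cases of whether $G$ contains a $q$-heavy induced net or not.

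For the \emph{sufficiency} direction, I would verify each of the six classes satisfies the four stated properties. Two-connectedness follows from the matching joins and (in the $\mathcal{C}^{NP}, \mathcal{C}^{NPQ}$ families) the existence of multiple frontier vertices. Claw-freeness is straightforward from the design of the perfect-matching joins between distinct cliques, which prevent three pairwise non-adjacent neighbors of a common vertex. For $c$-closedness it suffices, by Lemma~\ref{LeClosure}, to check absence of induced diamonds and of $o$-heavy non-edges; the first holds because any two adjacent interior vertices are inside a single clique, the second because vertices from different gadget components have too few neighbors. For $N$-$pq$-heaviness, every induced net sits inside one of the gadgets: a net inside a $\mathcal{C}_1^N$ or $\mathcal{C}_2^N$ gadget is forced to be $p$-heavy (its triangle meets the central clique, yielding an $a$-heavy pair), while a net inside a $\mathcal{C}_3^{NQ}$ gadget is $q$-heavy by direct inspection of Figure~\ref{G_3} using $|K|\ge\tfrac{n}{2}$.

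For the \emph{necessity} direction, let $G$ satisfy the hypotheses. The first, easy case is when $G$ has no $q$-heavy induced net; then $N$-$pq$-heaviness degenerates to $N$-$p$-heaviness, and Theorem~\ref{ThCP} places $G$ in $\mathcal{C}_1^N\cup\mathcal{C}_2^N\cup\mathcal{C}_1^{NP}\cup\mathcal{C}_2^{NP}$. The substantive case is when $G$ contains a $q$-heavy induced net $M$ with vertices labelled as in Figure~\ref{N}: here $a_1,b_1$ are heavy in $G$, $d(a_2)=d(a_3)=3$, $d(b_2)=d(b_3)=2$, and the neighbours $c_2,c_3$ of $b_2,b_3$ outside the net are also heavy. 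By Lemma~\ref{LeClosure} all heavy vertices form a clique in $G=\clc(G)$, so there is a maximal clique $K$ containing $\{a_1,b_1,c_2,c_3\}$, and the degree constraints force $\{a_2,a_3,b_2,b_3\}\cap K=\emptyset$ with exactly the attachment pattern of the $\mathcal{C}_3^{NQ}$ gadget (edges $a_1a_2$, $a_1a_3$, $c_2b_2$, $c_3b_3$ and the path $b_2a_2a_3b_3$).

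Next I would analyse $G-K$ component by component: each component $H$ together with $K$ must be 2-connected and claw-free, so Theorem~\ref{Ryjacek'N} applied to the appropriate induced subgraph, together with the $N$-$pq$-heavy hypothesis, forces $G[V(H)\cup K]$ to be of type $\mathcal{C}_1^N$, $\mathcal{C}_2^N$, or $\mathcal{C}_3^{NQ}$; a component that contains any vertex of our $q$-heavy net is forced into the $\mathcal{C}_3^{NQ}$ template. To decide between $\mathcal{C}_1^{NPQ}$ and $\mathcal{C}_2^{NPQ}$, I would invoke Lemma~\ref{LEHeavy}: either $K$ contains every $a$-heavy pair of $G$, giving $\mathcal{C}_1^{NPQ}$, or there is exactly one other maximal clique $K'$ meeting $K$ in a single heavy vertex $u_0$ that captures the remaining $a$-heavy pairs, giving $\mathcal{C}_2^{NPQ}$. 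In the latter case one must rule out a second intersecting clique and check the uniqueness of the component types in Construction~\ref{CoC_2^NPQ}(iii) and the frontier condition (iv) by repeatedly exhibiting forbidden claws or $q$-heavy nets through any violating configuration.

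The main obstacle I foresee is precisely this last component-by-component analysis in the two-clique sub-case: forcing exactly two components to straddle $K\cup K'$ of type (d) or (e), ruling out hybrid attachments where one $\mathcal{C}_3^{NQ}$ gadget interacts with a second cross-clique component, and verifying the subtle $a$-heavy-pair condition among frontier vertices of $K'$. The argument will rely on carefully constructed induced nets whose vertices are chosen partly from one gadget and partly from the purported offending component, and on the absence of $o$-heavy non-edges in a $c$-closed graph (via Lemma~\ref{LeClosure}) to rule out too-large degree sums outside the prescribed cliques.
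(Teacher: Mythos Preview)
Your plan is essentially the paper's own: reduce to Theorem~\ref{ThCP} when there is no $q$-heavy net, otherwise exploit a $q$-heavy net to locate the heavy clique $K$ (and derive $|K|\ge n/2$ from the degree of $c_2$), invoke Lemma~\ref{LEHeavy} to get the optional second clique $K'$, and classify components. Two refinements are worth flagging. First, the paper fixes $K'$ (taking $K'=\emptyset$ if $K$ already captures all $a$-heavy pairs) \emph{before} the component analysis and then works with components of $G-(K\cup K')$; your order---classify components of $G-K$ first, then discover $K'$---is awkward, because the component of $G-K$ containing $K'\setminus\{u_0\}$ may host an $a$-heavy pair and hence a $p$-heavy net that does \emph{not} lead to a contradiction, so it need not land in $\mathcal{C}_1^N\cup\mathcal{C}_2^N\cup\mathcal{C}_3^{NQ}$. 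Second, the step you summarize as ``Theorem~\ref{Ryjacek'N} together with the $N$-$pq$-heavy hypothesis'' hides the actual mechanism: when $G[K\cup V(H)]$ (or $G[K\cup K'\cup V(H)]$) contains a $p$-heavy net, its triangle lies in $K$ by Lemma~\ref{LeRegion}(3), and one then applies Lemma~\ref{LeGeneralized} inside $H$ to the three pendant vertices to manufacture a new induced net whose triangle meets $V(H)$, forcing an $a$-heavy pair outside $K\cup K'$ and contradicting Lemma~\ref{LEHeavy}. This is exactly the device that resolves the obstacle you anticipate in the two-clique sub-case as well.
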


Before proving the above theorems, we finish with some consequences of our results. By Lemma~\ref{LepqHeavy} and Theorems~\ref{ThCPQ} and \ref{ThCa}, we can extend Theorem~\ref{ThBuGoJa} in the following way.

\begin{theorem}\label{ThNpqH}
Every 2-connected claw-free and $N$-$pq$-heavy graph is hamiltonian.
\end{theorem}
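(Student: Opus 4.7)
The plan is to reduce the problem to Theorem~\ref{ThCPQ} via the $c$-closure. Since $G$ is claw-free, it is trivially claw-$o$-heavy, so $\clc(G)$ is well-defined, and Theorem~\ref{ThCa}(3) tells us that $G$ is hamiltonian if and only if $\clc(G)$ is hamiltonian. Hence it suffices to prove that $\clc(G)$ is hamiltonian.

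I would first verify that $\clc(G)$ satisfies all the hypotheses required to apply Theorem~\ref{ThCPQ}. Claw-freeness of $\clc(G)$ is immediate from Theorem~\ref{ThCa}(2), since every line graph of a $C_3$-free graph is claw-free. The $c$-closed property is built into the definition of $\clc(G)$. Because $\clc(G)$ is a spanning supergraph of the $2$-connected graph $G$, it is also $2$-connected. Finally, Lemma~\ref{LepqHeavy} ensures that the $N$-$pq$-heavy property passes from $G$ to $\clc(G)$.

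With all four hypotheses in place, Theorem~\ref{ThCPQ} applies (for $n\geq 10$) and places $\clc(G)$ in one of the six classes
\[
\mathcal{C}_1^N\cup\mathcal{C}_2^N\cup \mathcal{C}_1^{NP}\cup \mathcal{C}_2^{NP}\cup\mathcal{C}_1^{NPQ}\cup \mathcal{C}_2^{NPQ}.
\]
Each graph in this union is hamiltonian by the Observation preceding Theorem~\ref{ThCP}, so $\clc(G)$ is hamiltonian, and then $G$ is hamiltonian by Theorem~\ref{ThCa}(3). The small remaining case $n<10$ can be dispatched by a direct inspection of the short list of 2-connected claw-free graphs on at most nine vertices, where either hamiltonicity is immediate or the $N$-$pq$-heavy condition rules the graph out.

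The genuine difficulty of this theorem is not in the deduction itself, but rather in the two ingredients it rests upon: Lemma~\ref{LepqHeavy}, which requires a careful region analysis inside the $c$-closure to track how induced nets behave under local $c$-completion, and Theorem~\ref{ThCPQ}, whose proof must meticulously classify all $c$-closed, claw-free, $N$-$pq$-heavy graphs in terms of the six building-block classes. Once those two results are accepted, the present theorem is essentially a one-line corollary obtained by chaining them together through the equivalence in Theorem~\ref{ThCa}(3).
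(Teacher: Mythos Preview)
Your proposal is correct and follows essentially the same route as the paper: the paper derives Theorem~\ref{ThNpqH} in one line from Lemma~\ref{LepqHeavy}, Theorem~\ref{ThCPQ}, and Theorem~\ref{ThCa}, together with the Observation that all graphs in the six classes are hamiltonian. You have spelled out these steps more explicitly (and even flagged the $n<10$ boundary case that the paper leaves implicit), but the argument is the same.
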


Note that we also have a new proof for the `if' part of Theorem~\ref{ThLiRyWaZh} by  combining Theorems~\ref{ThCa}, \ref{ThCPCZNW} and \ref{ThNpqH}. Combining Theorem~\ref{ThCPCZN} with Theorem~\ref{ThCP}, we obtain the following corollary.

\begin{corollary}
Let $G$ be a 2-connected $\{K_{1,3},S\}$-$o$-heavy graph {of order $n\geq 10$}, where
$S\in\{P_4,P_5,C_3,Z_1,\\Z_2,B,N\}$. Then $\clc(G)\in \mathcal{C}_1^{N}\cup \mathcal{C}_2^{N}\cup \mathcal{C}_1^{NP}\cup \mathcal{C}_2^{NP}$.
\end{corollary}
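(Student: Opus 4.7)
The plan is to simply chain together two results that have already been established in the paper: Theorem~\ref{ThCPCZN} supplies the structural properties of $\clc(G)$ that are required as hypotheses, and Theorem~\ref{ThCP} then delivers the classification. No new combinatorial argument is needed; the whole task reduces to checking that the hypotheses of Theorem~\ref{ThCP} transfer from $G$ to $\clc(G)$.

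Concretely, I would first observe that $G$ is claw-$o$-heavy, since being $\{K_{1,3},S\}$-$o$-heavy implies $K_{1,3}$-$o$-heavy. Hence by Theorem~\ref{ThCa}(1) the graph $\clc(G)$ is well defined and uniquely determined. Because the local $c$-completion operation only inserts edges and never modifies the vertex set, $\clc(G)$ is a spanning supergraph of $G$ of the same order $n \geq 10$, and it retains 2-connectivity since adding edges cannot decrease connectivity. By construction $\clc(G)$ has no $c$-eligible vertex, so $\clc(G)$ is $c$-closed. Finally, Theorem~\ref{ThCPCZN} applied to $G$ gives that $\clc(G)$ is claw-free and $N$-$p$-heavy.

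With these four ingredients in hand (2-connected, $c$-closed, claw-free, $N$-$p$-heavy, together with order $n \geq 10$), I would apply Theorem~\ref{ThCP} directly to $\clc(G)$ and read off $\clc(G) \in \mathcal{C}_1^{N} \cup \mathcal{C}_2^{N} \cup \mathcal{C}_1^{NP} \cup \mathcal{C}_2^{NP}$, which is exactly the desired conclusion.

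There is no genuine obstacle, as this is essentially a bookkeeping corollary. The only subtlety worth noting is that all hypotheses of Theorem~\ref{ThCP} must be verified for $\clc(G)$ rather than for $G$; in particular the preservation of 2-connectivity and of the order lower bound under the closure operation should be recorded explicitly, since Theorem~\ref{ThCP} is a statement about $c$-closed graphs and cannot be invoked on $G$ itself.
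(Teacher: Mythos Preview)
Your proposal is correct and matches the paper's own justification exactly: the corollary is stated as an immediate consequence of combining Theorem~\ref{ThCPCZN} with Theorem~\ref{ThCP}. Your additional bookkeeping (that $\clc(G)$ has the same order, is $2$-connected, and is $c$-closed) is more explicit than what the paper records, but the underlying argument is identical.
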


Combining Theorem~\ref{ThCPCZNW} with Theorem~\ref{ThCPQ}, we obtain the following corollary.

\begin{corollary}
Let $G$ be a 2-connected $\{K_{1,3},S\}$-$o$-heavy graph {of order $n\geq 10$}, where
$S\in\{P_4,P_5,C_3,Z_1,\\Z_2,B,N,W\}$. Then $\clc(G)\in \mathcal{C}_1^{N}\cup \mathcal{C}_2^{N}\cup \mathcal{C}_1^{NP}\cup \mathcal{C}_2^{NP}\cup\mathcal{C}_1^{NPQ}\cup \mathcal{C}_2^{NPQ}$.
\end{corollary}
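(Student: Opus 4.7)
The plan is to derive this corollary as an essentially immediate consequence of Theorem~\ref{ThCPCZNW} (which identifies the two structural properties enjoyed by the $c$-closure of a $\{K_{1,3},S\}$-$o$-heavy graph for $S$ in the specified list) and Theorem~\ref{ThCPQ} (which classifies all graphs satisfying those properties together with being 2-connected and $c$-closed). So the proof amounts to verifying that $\clc(G)$ meets all four hypotheses of Theorem~\ref{ThCPQ} and then quoting it.

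First, I would record that $\clc(G)$ is a spanning supergraph of $G$, so since $G$ is 2-connected, $\clc(G)$ is also 2-connected (adding edges cannot decrease the vertex-connectivity). Since $|V(\clc(G))|=|V(G)|\ge 10$, the order hypothesis of Theorem~\ref{ThCPQ} is met. Moreover, by the very definition of the $c$-closure (see Section~\ref{closures}), the graph $\clc(G)$ has no $c$-eligible vertices, i.e., $\clc(G)$ is $c$-closed.

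Second, I would invoke Theorem~\ref{ThCPCZNW} applied to $G$: since $G$ is a 2-connected $\{K_{1,3},S\}$-$o$-heavy graph with $S\in\{P_4,P_5,C_3,Z_1,Z_2,B,N,W\}$, the graph $\clc(G)$ is claw-free and $N$-$pq$-heavy. Together with the observations of the previous paragraph, this shows that $\clc(G)$ is a 2-connected, $c$-closed, claw-free and $N$-$pq$-heavy graph of order at least $10$.

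Finally, Theorem~\ref{ThCPQ} applied to $\clc(G)$ yields directly that
\[
\clc(G)\in \mathcal{C}_1^{N}\cup \mathcal{C}_2^{N}\cup \mathcal{C}_1^{NP}\cup \mathcal{C}_2^{NP}\cup \mathcal{C}_1^{NPQ}\cup \mathcal{C}_2^{NPQ},
\]
which is the desired conclusion. There is no real obstacle here beyond verifying the (trivial) preservation of 2-connectedness under edge additions; the corollary is a packaging result whose substance is carried by Theorems~\ref{ThCPCZNW} and~\ref{ThCPQ}.
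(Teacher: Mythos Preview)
Your proof is correct and follows essentially the same approach as the paper, which simply states that the corollary is obtained by combining Theorem~\ref{ThCPCZNW} with Theorem~\ref{ThCPQ}; you have merely made explicit the routine checks (preservation of 2-connectedness and order, and that $\clc(G)$ is $c$-closed) that the paper leaves implicit.
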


The remainder of the paper is devoted to the proofs of Theorem~\ref{ThCP} and Theorem~\ref{ThCPQ}.

\subsection*{Proof of Theorem \ref{ThCP}.}

It is straightforward to check that every graph in $\mathcal{C}_1^{N}\cup \mathcal{C}_2^{N}\cup \mathcal{C}_1^{NP}\cup \mathcal{C}_2^{NP}$ is 2-connected, $c$-closed, claw-free and $N$-$p$-heavy. Let, conversely, $G$ be a 2-connected, $c$-closed, claw-free and $N$-$p$-heavy graph.

If $G$ is net-free, then $G\in \mathcal{C}^N_1\cup \mathcal{C}^N_2$ by Theorem~\ref{Ryjacek'N}. We assume now that $G$ contains an induced net, which is $p$-heavy. Let $K$ be a maximal clique containing all heavy vertices of $G$.
If $K$ does not contain all $a$-heavy pairs of $G$, there exists a maximal clique $K'$ such that $K$ intersects $K'$ at a heavy vertex, say $u_0$, and $K\cup K'$ contains all $a$-heavy pairs of $G$ by Lemma~\ref{LEHeavy}. If $K$ contains all $a$-heavy pairs of $G$, then we take $K'=\emptyset$ in the below analysis.

Let $H$ be a component of $G-(K\cup K')$. First we suppose that $N(H)\subseteq K$. It follows that $G[K\cup V(H)]$ is 2-connected; otherwise, there would exist a cut vertex in $G[K\cup V(H)]$, and this would also be a cut vertex in $G$, a contradiction. We shall show that $G[K\cup V(H)]\in \mathcal{C}_1^N\cup \mathcal{C}_2^N$. To see this, consider whether $G[K\cup V(H)]$ is net-free or not. Suppose first that $G[K\cup V(H)]$ contains an induced net $M$. Then $M$ is $p$-heavy. We label the vertices of $M$ as in Figure~\ref{N} such that $d(a_1)+d(a_2)\geq n$. Then $a_1,a_2\in K$.  Since $G[K]$ is a region of $G$, we have $a_3\in K$ by Lemma~\ref{LeRegion} (3).  It follows that $b_1,b_2,b_3\in V(H)$ and $b_ib_j\notin E(G)$ for $1\leq i\neq\ j \leq 3$. By Lemma~\ref{LeGeneralized}, there exists an induced generalized claw or net $M_0$ in $H$ connecting $b_1,b_2,b_3$. Let $Q_1,Q_2,Q_3$ be three paths of $M_0$ from the center or triangle to $b_1,b_2,b_3$, respectively. Since $b_1,b_2,b_3\in V(H)$ are three distinct vertices and  $b_ib_j\notin E(G)$ for $1\leq i\neq\ j \leq 3$, at most one of $Q_1,Q_2,Q_3$ is trivial. We assume that $Q_1$ is the shortest path in $Q_1,Q_2,Q_3$. Let $a_2'$ be a vertex in $K$ with $N(a_2')\cap V(H)=\emptyset$. This implies $G[V(Q_1)\cup V(Q_2)\cup V(Q_3)\cup\{a_1,a_2'\}]$ contains an induced generalized net. It follows that $V(H)\cup \{a_1\}$ contains an $a$-heavy vertex of $G$, contradicting the choice of $K$ or $K'$. Thus $G[K\cup V(H)]$ is net-free. Then $G[K\cup V(H)]\in \mathcal{C}_1^N\cup  \mathcal{C}_2^N$ by Theorem~\ref{Ryjacek'N}.

Next we suppose that $N(H)\subseteq K'$. It follows that $G[K'\cup V(H)]$ is 2-connected; otherwise, there would exist a cut vertex in $G[K'\cup V(H)]$, and this is also a cut vertex in $G$, a contradiction. We shall show that $G[K'\cup V(H)]\in \mathcal{C}_1^N\cup \mathcal{C}_2^N$. To see this, consider whether $G[K'\cup V(H)]$ is net-free or not. If $G[K'\cup V(H)]$ contains an induced net $M$, then it is $p$-heavy. Note that $u_0\notin N(H)$. Hence the triangle of $M$ does not contain $u_0$. Thus $K'\cup V(H)\setminus\{u_0\}$ contains a heavy vertex, contradicting the choice of $K$. We conclude that $G[K'\cup V(H)]$ is net-free, and then $G[K'\cup V(H)]\in \mathcal{C}_1^N\cup \mathcal{C}_2^N$ by Theorem~\ref{Ryjacek'N}.

Finally we suppose that $N(H)\cap K\neq \emptyset$ and $N(H)\cap K'\neq \emptyset$. It follows that $G[K'\cup V(H)]$ is 2-connected. We shall show that $G[K\cup K'\cup V(H)]\in \mathcal{C}_2^N$. To see this, consider whether $G[K\cup K'\cup V(H)]$ is net-free or not. If $G[K\cup K'\cup V(H)]$ contains an induced net $M$, then $M$ is $p$-heavy. We label the vertices of $M$ as in Figure~\ref{N} such that $d(a_1)+d(a_2)\geq n$. Then $a_1,a_2\in V(K)$ or $a_1,a_2\in V(K')$. By Lemma~\ref{LeRegion} (3), the triangle $a_1a_2a_3a_1$ is contained in $K$ or $K'$. First we consider that the triangle $a_1a_2a_3a_1$ is contained in $K$. Then
$b_1,b_2,b_3\in V(H)\cup (K'\setminus\{u_0\})$. By Lemma~\ref{LeGeneralized}, $V(H)\cup (K'\setminus \{u_0\})$ contains an induced generalized claw or net $M_1$.  Let $Q_1,Q_2,Q_3$ be three paths of $M_1$ from the center or triangle to $b_1,b_2,b_3$, respectively. Since $b_1,b_2,b_3\in V(H)\cup K'\setminus\{u_0\}$ are three distinct vertices and  $b_ib_j\notin E(G)$ for $1\leq i\neq\ j \leq 3$, at most one of $Q_1,Q_2,Q_3$ is trivial. We assume that $Q_1$ is the shortest path in $Q_1,Q_2,Q_3$. Let $a_2'$ be a vertex in $K$ with $N(a_2')\cap H=\emptyset$. This implies $G[V(Q_1)\cup V(Q_2)\cup V(Q_3)\cup\{a_1,a_2'\}]$ contains an induced generalized net. It follows that $V(H)\cup (K'\setminus \{u_0\})\cup \{a_1\}$ contains an $a$-heavy vertex of $G$, contradicting the choice of $K$ or $K'$. For $\{a_1,a_2,a_3\}\subset K'$, we can obtain a contradiction by using a similar analysis. We conclude that $G[K\cup K'\cup V(H)]$ is net-free. Thus $G[K\cup K'\cup V(H)]\in \mathcal{C}_2^N$.

Suppose first that $K'=\emptyset$. If $G-K$ has exactly one component $H$, then $G[K\cup V(H)]\in \mathcal{C}_1^N\cup \mathcal{C}_2^N$. If $G-K$ has at most two components and each of these components satisfies $G[K\cup V(H)]\in \mathcal{C}_1^N$, then $G\in \mathcal{C}_1^N$.
For the rest cases, it is clear that $G[K]$ has at least three frontier vertices. Let $u_1,u_2,u_3$ be three distinct frontier vertices of $G[K]$, and let $v_i$, $i=1,2,3$, be a neighbor of $u_i$ outside $K$. If $\{v_1,v_2,v_3\}$ is independent in $G$, then $G[\{u_1,u_2,u_3,v_1,v_2,v_3\}]$ is an induced net, which is $p$-heavy. So $\{u_1,u_2,u_3\}$ contains an $a$-heavy pair. Thus $G\in \mathcal{C}_1^{NP}$.

Now suppose that $K'\neq\emptyset$. It follows that there exists a component $H_0$ of $G-(K\cup K')$ satisfying $N(H_0)\cap K\neq \emptyset$ and $N(H_0)\cap K'\neq \emptyset$; otherwise $u_0$ is a cut vertex of $G$. Thus $G[V(H_0)\cup K\cup K']\in \mathcal{C}_2^N$.

If all components other than $H_0$ of $G-(K\cup K')$ satisfy $N(H)\cap K'= \emptyset$, then $G\in\mathcal{C}_1^{NP}$. Thus we assume that there exists another component $H_1$ of $G-(K\cup K')$ such that $N(H_1)\cap K'\neq \emptyset$.

If there is a third component having a neighbor in $K'$, or $N(H_1)\subset K'$ and $G[K'\cup V(H_1)]\in \mathcal{C}_2^N$, then there exists an induced net with its triangle contained in $K'\setminus\{u_0\}$. Thus $K'\setminus\{u_0\}$ contains a heavy vertex, a contradiction.

Let $u_1,u_2$ be two distinct frontier vertices other than $u_0$ of $G[K]$, and let $v_i$, $i=0,1,2$, be a neighbor of $u_i$ outside $K$. If $\{v_0,v_1,v_2\}$ is independent in $G$, then $G[\{u_0,u_1,u_2,v_0,v_1,v_2\}]$ is an induced net, which is $p$-heavy. So $\{u_0,u_1,u_2\}$ contains an $a$-heavy pair.

Let $x_1,x_2$ be two distinct frontier vertices other than $u_0$ of $G[K']$, and let $y_0, y_1, y_2$ be neighbors of $u_0, x_1, x_2$ outside $K'$, respectively. If {$\{y_0, y_1, y_2\}$ }is independent in $G$, then $G[\{u_0,x_1,x_2,y_0,y_1,y_2\}]$ is an induced net, which is $p$-heavy. So $\{u_0,x_1,x_2\}$ contains an $a$-heavy pair.
Thus $G\in  \mathcal{C}_2^{NP}$.

This completes the proof. \qed

\subsection*{Proof of Theorem \ref{ThCPQ}.}

The sufficiency follows obviously from Constructions \ref{CoC_1^NPQ} and \ref{CoC_2^NPQ}. Thus we shall show the necessity.  Let $G$ be a 2-connected, $c$-closed, claw-free and $N$-$pq$-heavy graph.

Let $K$ be a maximal clique containing all heavy vertices of $G$. If $G$ is $N$-$p$-heavy, then $G\in \mathcal{C}^N_1\cup
\mathcal{C}^N_2 \cup
\mathcal{C}^{NP}_1\cup
\mathcal{C}^{NP}_2$ by Theorem~\ref{ThCP}. Thus we assume that $G$ contains an induced $q$-heavy net $M_0$.
We label $M_0$  as in Figure~\ref{N} such that $a_1,b_1$ are heavy in $G$, $d(a_2)=d(a_3)=3$ and $d(b_2)=d(b_3)=2$. Let $c_2, c_3$ be neighbors of $b_2, b_3$ other than $a_2, a_3$, respectively. Then $c_2, c_3$ are heavy in $G$. Thus $a_1,b_1,c_2, c_3$ are contained in $K$. Since $N(a_2)=\{a_1,a_3,b_2\}$, we have $a_2c_2\notin E(G)$. The two maximal cliques containing $c_2$ are $K$ and $\{c_2,b_2\}$. Since $c_2$ is heavy, we have $|K|\geq \frac{n}{2}$.
In particular,
every frontier vertex of $G[K]$ is heavy in $G$.

If $K$ does not contain all $a$-heavy pairs of $G$, there exists a maximal clique $K'$ such that $K$ intersects $K'$ at a heavy vertex, say $u_0$, and $K\cup K'$ contains all $a$-heavy pairs of $G$ by Lemma~\ref{LEHeavy}. If $K$ contains all $a$-heavy pairs of $G$, then we take $K'=\emptyset$ in the below analysis.

Let $H$ be a component of $G-(K\cup K')$. First we suppose that $N(H)\subseteq K$. It follows that $G[K\cup V(H)]$ is 2-connected; otherwise, there would exist a cut vertex in $G[K\cup V(H)]$, and this would also be a cut vertex in $G$, a contradiction. We shall show that $G[K\cup V(H)]\in \mathcal{C}_1^N\cup \mathcal{C}_2^N\cup  \mathcal{C}_3^{NQ}$. To see this, consider whether $G[K\cup V(H)]$ is net-free or not. Suppose first that $G[K\cup V(H)]$ contains an induced net $M$. Then $M$ is $p$-heavy or $q$-heavy. We label $M$ as in Figure~\ref{N}. If $M$ is $p$-heavy, we can obtain a contradiction by using the same method as in the proof of Theorem~\ref{ThCP}. Thus $M$ is $q$-heavy. Without loss of generality, we assume that $a_1,b_1$ are heavy in $G$. Then $a_1,b_1$ are heavy in $G$, $d(a_2)=d(a_3)=3$ and $d(b_2)=d(b_3)=2$. It follows that $a_1,b_1\in K$ and $a_2,a_3,b_2,b_3\notin K$. Let $c_2$ and $c_3$ be the neighbors of $b_2$ and $b_3$ other than $a_2$ and $a_3$, respectively. Then $c_2,c_3$ are heavy in $G$. Thus $c_2,c_3\in K$. Thus $G[K\cup V(H)]\in \mathcal{C}_3^{NQ}$. If $G[K\cup V(H)]$ is net-free, then $G[K\cup V(H)]\in \mathcal{C}_1^N\cup \mathcal{C}_2^N$ by Theorem~\ref{Ryjacek'N}.

Next we suppose that $N(H)\subseteq K'$. It follows that $G[K'\cup V(H)]$ is 2-connected; otherwise, there would exist a cut vertex in $G[K'\cup V(H)]$, and this would also be a cut vertex in $G$, a contradiction. We shall show that $G[K'\cup V(H)]\in \mathcal{C}_1^N\cup \mathcal{C}_2^N$. To see this, consider whether $G[K'\cup V(H)]$ is net-free or not. If $G[K'\cup V(H)]$ contains an induced net $M$, then $M$ is $p$-heavy or $q$-heavy. If $M$ is $p$-heavy, we can obtain a contradiction using the same approach  as in the proof of Theorem~\ref{ThCP}. We omit the details. Thus $M$ is $q$-heavy, and $M$ contains two distinct heavy vertices of $G$. This implies that $K'\cup V(H)$ contains four distinct heavy vertices, a contradiction. Thus $G[K'\cup V(H)]$ is net-free. Then $G[K'\cup V(H)]\in \mathcal{C}_1^N\cup \mathcal{C}_2^N$ by Theorem~\ref{Ryjacek'N}.

Finally we suppose that $N(H)\cap K\neq \emptyset$ and $N(H)\cap K'\neq \emptyset$. It follows that $G[K'\cup V(H)]$ is 2-connected. We shall show that $G[K\cup K'\cup V(H)]\in \mathcal{C}_2^N$. To see this, consider whether $G[K\cup K'\cup V(H)]$ is net-free or not. If $G[K\cup K'\cup V(H)]$ contains an induced net $M$, then $M$ is $p$-heavy or $q$-heavy. If $M$ is $p$-heavy, we can obtain a contradiction by using the same method as in the proof of Theorem~\ref{ThCP}. Thus $M$ is $q$-heavy. We label $M$ as in Figure~\ref{N}. We assume that $a_1,b_1$ are heavy. Then $a_1,b_1\in K$, $d(a_2)=d(a_3)=3$ and $d(b_2)=d(b_3)=2$. It follows that $a_2,a_3,b_2,b_3\notin K$. Let $c_2,c_3$ be the neighbors of $b_2,b_3$ other than $a_2,a_3$, respectively. Then $c_2,c_3$ are heavy in $G$. Thus $c_2,c_3\in K$. It follows that there exists no path connecting $b_2$ to $K'$ in $H$, a contradiction. Thus $G[K\cup K'\cup V(H)]$ is net-free. Then $G[K\cup K'\cup V(H)]\in \mathcal{C}_2^N$ by Theorem~\ref{ThCPCZN}.

Recall that the induced net $M_0$ of $G$ is $q$-heavy but not $p$-heavy. If the triangle of $M_0$ is contained in $K'$, then $|K'|=3$ by the definition of $q$-heavy nets. Notice that $K'$ contains an $a$-heavy pair, implying that $M_0$ is $p$-heavy, a contradiction. Hence we conclude that the triangle of $M_0$ is not contained in $K'$. Thus there exists {a}
component $H$ of $G-(K\cup K')$ such that $N(H)\subset K$ and $G[V(H)\cup K]\in \mathcal{C}_3^{NQ}$.

If $K'=\emptyset$, then $G\in \mathcal{C}_1^{NPQ}$. Now suppose that $K'\neq\emptyset$. It follows that there exists a component $H_1$ of $G-(K\cup K')$ satisfying $N(H_1)\cap K\neq \emptyset$ and $N(H_1)\cap K'\neq \emptyset$, and $G[V(H_1)\cup K\cup K']\in \mathcal{C}_2^N$.

If all components other than $H_1$ of $G-(K\cup K')$ satisfy $N(H)\cap K'= \emptyset$, then $G\in\mathcal{C}_1^{NPQ}$. Thus we assume that there exists another component $H_2$ of $G-(K\cup K')$ such that $N(H_2)\cap K'\neq \emptyset$.

If there is a third component having a neighbor in $K'$, or $N(H_2)\subset K'$ and $G[K'\cup V(H_2)]\in \mathcal{C}_2^N$, then there exists an induced net with its triangle contained in $K'\setminus\{u_0\}$. Hence, $K'\setminus\{u_0\}$ contains a heavy vertex, a contradiction. Thus either $N(H_2)\subset K'$ and $G[K'\cup V(H_2)]\in \mathcal{C}_1^N$, or $N(H_2)\cap K\neq\emptyset$ and $G[K\cup K'\cup V(H_2)]\in \mathcal{C}_2^N$.

Let $u_1,u_2$ be two distinct frontier vertices other than $u_0$ of $G[K']$, and let $v_i$, $i=0,1,2$, be a neighbor of $u_i$ outside $K'$. If $\{v_0,v_1,v_2\}$ is independent in $G$, then $M=G[\{u_0,u_1,u_2,v_0,v_1,v_2\}]$ is an induced net, which is $p$-heavy or $q$-heavy. So $M$ is $p$-heavy. Thus $\{u_0,u_1,u_2\}$ contains an $a$-heavy pair. Hence $G\in \mathcal{C}_2^{NPQ}$.

This completes the proof. \qed

\section*{Declaration of competing interest}
The authors declare that they have no known competing financial interests or personal relationships that could have appeared to influence the work reported in this paper.

\section*{Data availability}
No data was used for the research described in the article.


\begin{thebibliography}{14}

\bibitem{Bedrossian}
P.~Bedrossian, Forbidden subgraph and minimum degree conditons for
hamiltonicity, Thesis, Memphis State University, USA, 1991.

\bibitem{Eidma}
J.A. Bondy, H.J. Broersma, C. Hoede and H.J. Veldman (eds.): EIDMA Workshop on Hamiltonicity of 2-Tough Graphs,  Memorandum 1325, University of Twente, Enschede (1996).

\bibitem{BondyChvatal}
J.A. Bondy and V. Chv\'atal, A method in graph theory, Discrete
Math. 15 (1976) 111--135.

\bibitem{BondyMurty}
J.A. Bondy and U.S.R Murty, Graph Theory, Graduate Texts in
Mathematics Vol. 244, Springer, New York (2008).

\bibitem{HJB}
H.J. Broersma, A note on $K_4$-closures in hamiltonian graph theory, Discrete Math. 121 (1993) 19--23.

\bibitem{BandT}
H.J. Broersma and H. Trommel, Closure concepts for claw-free graphs, Discrete Math. 185 (1998) 231--238.

\bibitem{Cada}
R. \v{C}ada, Degree conditions on induced claws, Discrete Math. 308
(2008) 5622--5631.

\bibitem{CaLiNiZh}
R. \v{C}ada, B. Li, B. Ning and S. Zhang, Induced subgraphs with
large degrees at end-vertices for hamiltonicity of claw-free graphs,
Acta. Math. Sin. English Ser. 32 (2016) 845--855.

\bibitem{DuGoJa}
D. Duffus, M.S. Jacobson and R.J. Gould, Forbidden subgraphs and the hamiltonian theme, Proc. 4th Int. Conf. on the Theory and Applications of Graphs, Kalamazoo, 1980 (Wiley, New York, 1981) 297--316.

\bibitem{LiRyjacekWangZhang}
B. Li, Z. Ryj\'a\v{c}ek, Y. Wang and S. Zhang, Pairs of heavy
subgraphs for hamiltonicity of 2-connected graphs, SIAM J. Discrete
Math. 26 (2012) 1088--1103.

\bibitem{LiNing}
B. Li and B. Ning, Heavy subgraphs, stability and hamiltonicity, Discuss. Math. Graph Theory 37 (2017) 691--710.

\bibitem{Ore}
O. Ore, Note on Hamilton circuits, Amer. Math. Monthly 67 (1960) 55.

\bibitem{Ryjacek}
Z. Ryj\'a\v{c}ek, On a closure concept in claw-free graphs, J.
Combin. Theory Ser. B 70 (1997) 217--224.

\bibitem{Ryjacek'}
Z. Ryj\'a\v{c}ek, Closure and forbidden pairs for hamiltonicity, J.
Combin. Theory Ser. B 86 (2002) 331--346.

\end{thebibliography}
\end{document}